\begin{document}
\title{Combinatorial Bounds in Distal Structures}
\author{Aaron Anderson}
\maketitle

\begin{abstract}
We provide polynomial upper bounds for the minimal sizes of distal cell decompositions in several kinds of distal structures, particularly weakly $o$-minimal and $P$-minimal structures. The bound in general weakly $o$-minimal structures generalizes the vertical cell decomposition for semialgebraic sets, and the bounds for vector spaces in both $o$-minimal and $p$-adic cases are tight. We apply these bounds to Zarankiewicz's problem in distal structures.
\end{abstract}

\section{Introduction}
Some of the strongest tools in geometric combinatorics revolve around partitioning space. These techniques fall largely into two categories, the polynomial partitioning method developed by Guth and Katz \cite{gk}, and versions of the cutting lemma for various cell decompositions \cite{chaz91}. 
While the polynomial method has yielded impressive results, its reliance on B\'ezout's Theorem limits its scope to questions about algebraic and semialgebraic sets. If one tries to generalize it to sets definable in $o$-minimal structures other than real closed fields, B\'ezout's theorem can fail \cite{bezout}. The cutting lemma method, however, can be generalized to more complicated sets using the language of model theory. Distal cell decompositions, defined in \cite{cgs}, provide an analogous definition to the stratification or vertical cell decomposition results known for $\R$, with a similar cutting lemma, for families of sets definable in a suitable first-order structure, known as a distal structure.

We then study distal cell decompositions through the lens of shatter functions. In \cite{vcdensityi}, the \emph{dual shatter function} $\pi^*_\Phi$ of a set $\Phi$ of formulas is defined so that $\pi^*_\Phi(n)$ is the maximum cardinality of the set of $\Phi$-types over a parameter set of size $n$. We define an analogous shatter function $\pi_\mathcal{T}(n)$ for each distal cell decomposition $\mathcal{T}$, where instead of counting all $\Phi$-types, we count the maximum number of cells needed for a distal cell decomposition against $n$ sets (See Definition \ref{distal_density}). This shatter function grows polynomially in a distal structure, so each $\mathcal{T}$ has some exponent $t \in \mathbb{R}$ such that $\pi_\mathcal{T}(n)=\bigO(n^t)$. This exponent is what determines the effectiveness of the cutting lemma for combinatorial applications. Just as the dual VC density of $\Phi$ is defined to be the rate of growth of $\pi^*_\Phi$, we define the \emph{distal density} of $\Phi$ to be the infimum of the exponents of all distal cell decompositions $\mathcal{T}$ for $\Phi$.

In this article, we construct and bound the sizes of distal cell decompositions for definable families in several distal structures, namely the weakly $o$-minimal structures, including a better bound on ordered vector spaces, the field $\Q_p$, and its linear reduct. Then we apply these bounds to some combinatorial problems.

\subsection{Main Results}
Our first theorem constructs distal cell decompositions (see Definition \ref{def_dcd}) for all sets of formulas $\Phi(x;y)$, with $x$ and $y$ tuples of variables of arbitrary finite length, in some structure $\mathcal{M}$, given a distal cell decomposition for all sets of formulas $\Phi(x;y)$, with with $\abs{x}=1$. This construction by inducting on the dimension generalizes the stratification result in \cite{chaz91}, which essentially constructs distal cell decompositions for $\R$ as an ordered field. It is also similar to Theorem 7.1 in \cite{vcdensityi}, which provides an analogous bound for the VC density of a set of formulas in many dimensions assuming the strong VCd property in dimension 1.

\begin{thm*}[Theorem \ref{dimensioninduction}]
	Let $\mathcal{M}$ be a structure in which all finite sets $\Phi(x;y)$ of formulas with $\abs{x}=1$ admit a distal cell decomposition with $k$ parameters (see Definition \ref{def_param_exp}), and for some $d_0\in \N$, all finite sets $\Phi(x;y)$ of formulas with $\abs{x}=d_0$ admit distal cell decompositions of exponent at most $r$. Then all finite sets $\Phi(x;y)$ of formulas with $\abs{x}=d\geq d_0$ admit distal cell decompositions of exponent $k(d-d_0)+r$.
\end{thm*}

In sections \ref{omin}, \ref{pres}, \ref{qpaff}, and \ref{qpmac}, we prove upper bounds on the exponents of distal cell decompositions in weakly $o$-minimal structures, as well as the field $\Q_p$ and its linear reduct.
Those results are summarized and contrasted with the best-known bounds for the dual VC density, in the following theorem:
\begin{thm}\label{thm_table}
	Let $\mathcal{M}$ be a structure from the first column of this table.
	Then any formula $\phi(x;y)$ has dual VC density bounded by the corresponding value in the second column, and admits a distal cell decomposition with exponent
	bounded by the value in the third column. Thus also its distal density is bounded by the value in the third column.

	\begin{figure}[H]
		\begin{tabular}{|c | c | c|}
		\hline $\mathcal{M}$& Dual VC density & Distal Density \\ \hline
		 $o$-minimal expansions of groups & $\abs{x}$ & $2\abs{x}-2$ (1 if $\abs{x}=1$)  \\ \hline
		 weakly $o$-minimal structures& $\abs{x}$ & $2\abs{x}-1$\\ \hline
		 ordered vector spaces over ordered division rings & $\abs{x}$ & $\abs{x}$\\ \hline
		 Presburger arithmetic & $\abs{x}$ & $\abs{x}$\\ \hline
		 $\Q_p$ the valued field & $2\abs{x}-1$ & $3\abs{x}-2$\\ \hline
		 $\Q_p$ in the linear reduct & $\abs{x}$ & $\abs{x}$ \\ \hline
	\end{tabular}
	\end{figure}
\end{thm}
\begin{proof}
The Dual VC density bounds are from \cite{vcdensityi}, except for the bound for the linear reduct of $\Q_p$, which is from \cite{bobkov}.

Theorem \ref{ominthm} establishes the bound for weakly $o$-minimal structures by constructing a distal cell decomposition in the 1-dimensional case, and then applying Theorem \ref{dimensioninduction}. Taking into account \cite{cgs}, we improve that bound for $o$-minimal expansions of fields to match the bound from \cite{chaz91} for the case of $\R$ as an ordered field. This improves \cite[Theorem 4.0.9]{barone}, which provides a cell decomposition with $\bigO(\abs{B}^{2\abs{x}-1})$ uniformly definable cells for $\mathcal{M}$ an $o$-minimal expansion of a real closed field.

Theorem \ref{ominvec} shows that the distal density of any finite set of formulas $\Phi(x;y)$ in an ordered vector space over an ordered division ring matches the VC density. In particular, the distal exponent of $\Phi$ is bounded by $\abs{x}$, which is optimal. This also works for any $o$-minimal locally modular expansion of an abelian group, and Theorem \ref{pres} shows the same results for $\Z$ in Presburger's language.

Theorem \ref{qpaffthm} shows that the distal density matches the VC density for any finite set of formulas $\Phi(x;y)$ in $\Q_p$ equipped with its reduced linear structure in the language $\mathcal{L}_\mathrm{aff}$ described by Leenknegt in \cite{lee}. The proof adapts Bobkov's bound on VC density in the same structure \cite{bobkov}.

Theorem \ref{qpmacthm} establishes the bound for $\Q_p$ or any other $P$-minimal field with quantifier-elimination and definable Skolem functions in Macintyre's language by constructing a distal cell decomposition in the 1-dimensional case and applying Theorem \ref{dimensioninduction}.
\end{proof}

Finally in Section \ref{zarsec} we apply these results to combinatorics. We combine them with the results on Zarankiewicz's problem from \cite{cgs} to prove a bound on the number of edges in bipartite graphs definable in distal structures which omit some (oriented) complete bipartite graph $K_{s,u}$, similar to the bound given by Theorem 1.2 from \cite{sheff}.

\begin{cor*}[Corollary \ref{zarcor}, expressed in terms of distal density]
	Let $\mathcal{M}$ be a structure and $t \in \N_{\geq 2}$. Assume that $E(x,y)\subseteq M^{\abs{x}}\times M^{\abs{y}}$ is a definable relation given by an instance of a formula $\theta(x,y;z)\in\mathcal{L}$, such that the formula $\theta'(x;y,z):=\theta(x,y;z)$ has distal density at most $t$, and the graph $E(x,y)$ does not contain $K_{s,u}$. Then for every $\varepsilon \in \R_{>0},$ there is a constant $\alpha=\alpha(\theta,s,u, \varepsilon)$ satisfying the following.

	For any finite $P\subseteq M^{\abs{x}},Q \subseteq M^{\abs{y}}$, $\abs{P}=m,\abs{Q}=n$, we have:
	$$\abs{E(P,Q)}\leq \alpha\left(m^{\frac{(t-1)s}{ts-1}+\varepsilon}n^{\frac{t(s-1)}{ts-1}}+m+n\right).$$
\end{cor*}

This corollary then lets us place bounds on graphs in the following contexts:

\begin{cor*}[Corollary \ref{cor:Rexp}]
	Assume that $E(x,y)\subseteq \R^{\abs{x}}\times \R^{\abs{y}}$ is a relation given by a boolean combination of exponential-polynomial (in)equalities, and the graph $E(x,y)$ does not contain $K_{s,u}$. Then there is a constant $\alpha=\alpha(\theta,s,u)$ satisfying the following.

	For any finite $P\subseteq \R^{\abs{x}},Q \subseteq \R^{\abs{y}}$, $\abs{P}=m,\abs{Q}=n$, we have:
	$$\abs{E(P,Q)}\leq \alpha\left(m^{\frac{(2\abs{x}-2)s}{(2\abs{x}-1)s-1}}n^{\frac{(2\abs{x}-1)(s-1)}{(2\abs{x}-1)s-1}+\varepsilon}+m+n\right).$$
\end{cor*}

(Here an \emph{exponential-polynomial (in)equality} is an (in)equality between functions $\R^n \to \R$ in $\Z[x_1,\dots,x_n,e^{x_1},\dots,e^{x_n}]$ as in \cite{exppoly}.

\begin{cor*}[Corollary \ref{cor:Qpan}]
	Assume that $E(x,y)\subseteq \Z_p^{\abs{x}}\times \Z_p^{\abs{y}}$ is a subanalytic relation, and the graph $E(x,y)$ does not contain $K_{s,u}$. Then there is a constant $\alpha=\alpha(\theta,s,u)$ satisfying the following.

	For any finite $P\subseteq \Z_p^{\abs{x}},Q \subseteq \Z_p^{\abs{y}}$, $\abs{P}=m,\abs{Q}=n$, we have:
	$$\abs{E(P,Q)}\leq \alpha\left(m^{\frac{(3\abs{x}-3)s}{(3\abs{x}-2)s-1}}n^{\frac{(3\abs{x}-2)(s-1)}{(3\abs{x}-2)s-1}+\varepsilon}+m+n\right).$$
\end{cor*}

Here subanalytic relations are defined in the sense of \cite{subanalytic}.

\subsection*{Acknowledgements} 
We thank Artem Chernikov for excellent guidance, and Alex Mennen and Pierre Touchard for useful conversations. This paper was written with support from the NSF CAREER grant DMS-1651321.

\section{Preliminaries}
In this section, we review the notation and model-theoretic framework necessary to understand distal cell decompositions. For further background on these definitions, see \cite{cs} and \cite{cgs}.

Firstly, we review asymptotic notation:
\begin{defn}
	Let $f, g : \N \to \R_{\geq 0}$.
	\begin{itemize}
		\item We will say $f(x) = \bigO(g(x))$ to indicate that there exists $C \in \R_{>0}$ such that for $n \in \N_{>0}$, $f(n) \leq Cg(x)$.
		\item We will say $f(x) = \Omega(g(x))$ to indicate that there exists $C \in \R_{>0}$ such that for $n \in \N_{>0}$, $f(n) \geq Cg(x)$.
	\end{itemize}

	If $f,g : \N \times \N \to \R_{\geq 0}$, then $f(x,y) = \bigO(g(x,y))$ indicates that there is a constant $C \in \R_{>0}$ such that for all $m,n \in \N_{>0}$, $f(m,n) \leq Cg(m,n)$.
\end{defn}

Throughout this section, let $\mathcal{M}$ be a first-order structure in the language $\mathcal{L}$. We will frequently refer to $\Phi(x;y)$ as a set of formulas, which will implicitly be in the language $\mathcal{L}$. Each formula in $\Phi$ will have the same variables, split into a tuple $x$ and a tuple $y$, where, for instance, $\abs{x}$ represents the length of the tuple $x$. We use $M$ to refer to the universe, or underlying set, of $\mathcal{M}$, and $M^n$ to refer to its $n$th Cartesian power. If $\phi(x;y)$ is a formula with its variables partitioned into $x$ and $y$, and $b \in M^{\abs{y}}$, then $\phi(M^{\abs{x}};b)$ refers to the definable set $\{a \in M^{\abs{x}}:\mathcal{M}\models \phi(a,b)\}$. We also define the dual formula of $\phi(x;y)$ to be $\phi^*(y;x)$ such that $\mathcal{M}\models \A x\A y\phi(x;y)\leftrightarrow \phi^*(y;x)$, and similarly define $\Phi^*(y;x)$ to be the set $\{\phi^*(y;x):\phi(x;y) \in \Phi(x;y)\}$.

\begin{defn}
	For sets $A,X \subseteq M^d$, we say that $A$ \emph{crosses} $X$ if both $X \cap A$ and $X\cap \neg A$ are nonempty.
\end{defn}

\begin{defn}
	Let $B \subseteq M^t$.
	\begin{itemize}
		\item For $\phi(x;y)$ with $\abs{y}=t$, we say that $\phi(x;B)$ \emph{crosses} $X \subseteq M^{\abs{x}}$ when there is some $b\in B$ such that $\phi(M^{\abs{x}};b)$ crosses $X$.
		\item For $\Phi(x;y)$ with $\abs{y}=t$, we say that $X \subseteq M^{\abs{x}}$ \emph{is crossed by} $\Phi(x;B)$ when there is some $\phi \in B$ such that $\phi(x;B)$ crosses $X$.
	\end{itemize}
\end{defn}

\begin{defn}
	We define $S^\Phi(B)$ to be the set of complete $\Phi$-types over a set $B\subseteq M^{\abs{y}}$ of parameters, or alternately, the set of maximal consistent subsets of $\{\varphi(x;b):\varphi \in \Phi,b \in B\}\cup \{\neg \varphi(x;b):\varphi \in \Phi,b \in B\}$.
\end{defn}

Throughout this article, we will want to use the concepts of VC density and dual VC density.
\begin{defn}
	Let $\Phi(x;y)$ be a finite set of formulas. 
	\begin{itemize}
		\item For $B \subseteq M^{\abs{y}}$, define $\pi^*_\Phi(B) := \abs{S^\Phi(B)}$.
		\item For $n \in \N$, define $\pi^*_\Phi(n):=\max_{B \subseteq M^{\abs{y}},\abs{B}=n}\pi^*_\Phi(B)$.
		\item Define the \emph{dual VC density} of $\Phi$, $\mathrm{vc}^*(\Phi)$, to be the infimum of all $r \in \R_{> 0}$ such that there exists $C \in \R$ with $\abs{S^{\Phi}(B)}\leq C\abs{B}^r$ for all choices of $B$. Equivalently, we can define $\mathrm{vc}^*(\Phi)$ to be
		$$\limsup_{n\to\infty}\frac{\log \pi^*_\Phi(n)}{\log n}.$$
		\item Dually, we define $\pi_\Phi:=\pi^*_{(\Phi^*)}$ and define the \emph{VC density} of $\Phi$ to be $\mathrm{vc}(\Phi)=\mathrm{vc}^*(\Phi^*)$.
	\end{itemize}
\end{defn}
This definition of (dual) VC density of sets of formulas comes from Section 3.4 of \cite{vcdensityi}, which relates it to the other definitions of VC density.

\begin{defn}
	An \emph{abstract cell decomposition} for $\Phi(x;y)$ is a function $\mathcal{T}$ that assigns to each finite $B \subset M^{\abs{y}}$ a set $\mathcal{T}(B)$ whose elements, called cells, are subsets of $M^{\abs{x}}$ not crossed by $\Phi(x;B)$, and cover $M^{\abs{x}}$ so that $M^{\abs{x}}=\bigcup\mathcal{T}(B)$.
\end{defn}

\begin{eg}
	Fix $\Phi(x;y)$. For each type $p(x) \in S^\Phi(B)$, the set $p(M^{\abs{x}})$ is a definable subset of $M^{\abs{x}}$, as $p(x)$ is equivalent to a boolean combination of formulas $\phi(x;b)$ for $\phi \in \Phi$ and $b \in B$. Define $\mathcal{T}_{\mathrm{vc}}(B):=\{p(M^{\abs{x}}):p\in S^\Phi(B)\}$. Then $\mathcal{T}_{\mathrm{vc}}$ is an abstract cell decomposition with $\abs{\mathcal{T}_{\mathrm{vc}}(B)}=\abs{S^{\Phi}(B)} = \pi^*_\Phi(B)$.
\end{eg}

\begin{prop} \label{abstractsize}
	For any abstract cell decomposition $\mathcal{T}$ of $\Phi(x;y)$ and any finite $B\subseteq M^{\abs{y}}$, $\abs{\mathcal{T}(B)}\geq \pi^*_\Phi(B)$.
\end{prop}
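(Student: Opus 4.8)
The plan is to show that each cell in $\mathcal{T}(B)$ is contained in a single $\Phi$-type over $B$, and that the covering property forces every such type to appear, producing an injection from $S^\Phi(B)$ into $\mathcal{T}(B)$.

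First I would unwind the non-crossing condition. If a set $C \subseteq M^{\abs{x}}$ is not crossed by $\Phi(x;B)$, then for every $\phi \in \Phi$ and every $b \in B$ either $C \subseteq \phi(M^{\abs{x}};b)$ or $C \cap \phi(M^{\abs{x}};b) = \emptyset$; equivalently, any two points $a,a' \in C$ satisfy exactly the same formulas among $\{\phi(x;b),\neg\phi(x;b) : \phi \in \Phi,\ b \in B\}$, so $\mathrm{tp}_\Phi(a/B) = \mathrm{tp}_\Phi(a'/B)$. Hence to each nonempty cell $C \in \mathcal{T}(B)$ we may associate the unique type $p_C \in S^\Phi(B)$ realized by all of its points (empty cells, for which the condition is vacuous, can simply be discarded, as they contribute nothing to the cover).

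Next I would invoke the covering condition $M^{\abs{x}} = \bigcup \mathcal{T}(B)$. Given any $p \in S^\Phi(B)$, pick a realization $a \in M^{\abs{x}}$ (working, as is standard, in a sufficiently saturated $\mathcal{M}$, so that $p$ is realized in $M^{\abs{x}}$); then $a$ lies in some cell $C \in \mathcal{T}(B)$, and by the previous paragraph $p_C = \mathrm{tp}_\Phi(a/B) = p$. Thus the map $C \mapsto p_C$ from the nonempty cells of $\mathcal{T}(B)$ to $S^\Phi(B)$ is surjective, so choosing a one-sided inverse gives an injection $S^\Phi(B) \hookrightarrow \mathcal{T}(B)$, whence $\abs{\mathcal{T}(B)} \geq \abs{S^\Phi(B)} = \pi^*_\Phi(B)$.

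The argument is essentially bookkeeping, so I expect no real obstacle; the only point deserving a word of care is the status of types not realized in $\mathcal{M}$ itself, which is why one reads the statement with $\mathcal{M}$ saturated (equivalently, $S^\Phi(B)$ is taken to be the set of $\Phi$-types over $B$ realized in $M^{\abs{x}}$). Combined with the preceding example, where $\abs{\mathcal{T}_{\mathrm{vc}}(B)} = \pi^*_\Phi(B)$, this shows $\mathcal{T}_{\mathrm{vc}}$ is of minimal size among abstract cell decompositions of $\Phi$.
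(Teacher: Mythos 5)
Your argument is correct and is essentially the paper's proof: associate to each (nonempty) cell the unique $\Phi$-type of its points, and use the covering property to see this map onto $S^\Phi(B)$ is surjective. The only cosmetic difference is your appeal to saturation: since $\Phi$ and $B$ are finite, each type in $S^\Phi(B)$ is equivalent to a single formula, so consistency already forces it to be realized in $M$ itself, which is how the paper dispenses with that concern.
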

\begin{proof}
	As each cell $\Delta \in \mathcal{T}(B)$ is not crossed by $\Phi(x;B)$, its elements must all have the same $\Phi$-types over $B$. Thus there is a function $f:\mathcal{T}(B)\to S^\Phi(B)$ sending each cell to the $\Phi$-type over $B$ of its elements. Each type in $S^\Phi(B)$ is consistent and definable by a formula, and thus must be realized in $M$, so there must be at least one cell of $\mathcal{T}(B)$ containing formulas of that type. Thus $f$ is a surjection, and $\abs{\mathcal{T}(B)}\geq \abs{S^\Phi(B)}$.
\end{proof}

\begin{defn}\label{def_dcd}
	Let $\Phi(x;y)$ be a finite set of formulas without parameters. Then a \emph{distal cell decomposition} $\mathcal{T}$ for $\Phi$ is an abstract cell decomposition defined using the following data:
	\begin{itemize}
		\item A finite set $\Psi(x;y_1,\dots,y_k)$ of formulas (without parameters) where $\abs{y_1}=\dots=\abs{y_k}=\abs{y}$.
		\item For each $\psi \in \Psi$, a formula (without parameters) $\theta_\psi(y;y_1,\dots,y_k)$.
	\end{itemize}

	Given a finite set $B \subseteq M^{\abs{y}}$, let $\Psi(B):=\{\psi(M^{\abs{x}};b_1,\dots,b_k):\psi \in \Psi,b_1,\dots,b_k\in B\}$. This is the set of potential cells from which the cells of the decomposition are chosen. Then for each potential cell $\Delta =\psi(M^{\abs{x}};b_1,\dots,b_k)$, we let $\mathcal{I}(\Delta)=\theta_\psi(M^{\abs{y}};b_1,\dots,b_k)$. Then we define $\mathcal{T}(B)$ by choosing the cells $\Delta \in \Psi(B)$ such that $B \cap \mathcal{I}(\Delta)=\emptyset,$ that is, $\mathcal{T}(B)=\{\Delta \in \Psi(B):B\cap \mathcal{I}(\Delta)=\emptyset\}$.
\end{defn}

In the rest of this article, when $\Phi(x;y)$ is a finite set of formulas, we will assume that $\Phi$ is defined without parameters. 

The following lemma will be useful in defining distal cell decompositions later on:
\begin{lem}\label{bool_comb}
	Let $\Phi(x;y)$ be a finite set of formulas, and let $\Phi'(x;y)$ be a finite set of formulas such that each formula in $\Phi$ is a boolean combination of formulas in $\Phi'$. Then if $\mathcal{T}$ is a distal cell decomposition for $\Phi'$, it is also a distal cell decomposition for $\Phi$.
\end{lem}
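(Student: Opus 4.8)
The plan is to verify directly that the data defining the distal cell decomposition $\mathcal{T}$ for $\Phi'$ already satisfies all the requirements of Definition \ref{def_dcd} for $\Phi$, without modifying $\Psi$, the formulas $\theta_\psi$, or the rule $B \mapsto \mathcal{T}(B)$. The only condition in the definition of an abstract cell decomposition that mentions $\Phi$ rather than just $x$, $y$, and the cell data is the requirement that each cell $\Delta \in \mathcal{T}(B)$ not be crossed by $\Phi(x;B)$; everything else — that $\mathcal{T}(B)$ is a set of definable subsets of $M^{\abs{x}}$ covering $M^{\abs{x}}$, obtained by the prescribed ``$B \cap \mathcal{I}(\Delta) = \emptyset$'' selection — is literally identical for $\Phi$ and $\Phi'$ and so is inherited for free. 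So the entire content of the lemma is the single implication: a subset of $M^{\abs{x}}$ not crossed by $\Phi'(x;B)$ is not crossed by $\Phi(x;B)$.

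First I would unwind the definition of ``crossed'': a set $\Delta \subseteq M^{\abs{x}}$ is \emph{not} crossed by $\Phi(x;B)$ precisely when, for every $\phi \in \Phi$ and every $b \in B$, the definable set $\phi(M^{\abs{x}};b)$ does not cross $\Delta$, i.e. $\Delta \subseteq \phi(M^{\abs{x}};b)$ or $\Delta \cap \phi(M^{\abs{x}};b) = \emptyset$. Equivalently, all elements of $\Delta$ realize the same $\Phi$-type over $B$ (this is exactly the observation used in the proof of Proposition \ref{abstractsize}). I would phrase the argument in terms of types: fix $\Delta \in \mathcal{T}(B)$ and two points $a, a' \in \Delta$. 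Since $\mathcal{T}$ is a distal cell decomposition for $\Phi'$, $\Delta$ is not crossed by $\Phi'(x;B)$, so $a$ and $a'$ have the same $\Phi'$-type over $B$: for each $\phi' \in \Phi'$ and $b \in B$, $\mathcal{M} \models \phi'(a;b) \leftrightarrow \phi'(a';b)$.

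Now fix $\phi \in \Phi$ and $b \in B$. By hypothesis $\phi(x;y)$ is a boolean combination of finitely many formulas from $\Phi'$, say $\phi(x;y)$ is logically equivalent to $\beta\bigl(\phi'_1(x;y), \dots, \phi'_m(x;y)\bigr)$ for some boolean function $\beta$ and $\phi'_1, \dots, \phi'_m \in \Phi'$. Since the truth value of each $\phi'_i(\cdot;b)$ agrees at $a$ and $a'$, so does any boolean combination of them, hence $\mathcal{M} \models \phi(a;b) \leftrightarrow \phi(a';b)$. As $a, a' \in \Delta$ and $\phi \in \Phi$, $b \in B$ were arbitrary, $\Delta$ is not crossed by $\Phi(x;B)$. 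Thus every cell of $\mathcal{T}(B)$ is $\Phi$-good, and since all the other clauses of Definition \ref{def_dcd} are unchanged, $\mathcal{T}$ — with the same $\Psi$ and the same $\theta_\psi$'s — is a distal cell decomposition for $\Phi$. There is no real obstacle here; the only point requiring any care is making sure that the boolean combination in the hypothesis is taken uniformly in the parameter $y$ (so that the same boolean identity can be applied with the single parameter $b$), which is exactly what ``each formula in $\Phi$ is a boolean combination of formulas in $\Phi'$'' means for formulas in the variables $(x;y)$.
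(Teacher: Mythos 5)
Your proposal is correct and takes essentially the same route as the paper: the definability data of $\mathcal{T}$ does not depend on $\Phi$, and since every formula $\phi(x;b)$ with $\phi \in \Phi$, $b \in B$ is a boolean combination of formulas from $\Phi'(x;B)$, each of which has constant truth value on a cell $\Delta \in \mathcal{T}(B)$, so does $\phi(x;b)$, hence no cell is crossed by $\Phi(x;B)$. Your version merely spells this out pointwise via pairs $a, a' \in \Delta$ having the same $\Phi'$-type, which is the same observation in slightly more detail.
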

\begin{proof}
	The definability requirements for a distal cell decomposition do not depend on the set of formulas $\Phi$, so it suffices to show that $\mathcal{T}$ is an abstract cell decomposition for $\Phi$, or that for a given $B$, each cell $\Delta \in \mathcal{T}(B)$ is not crossed by $\Phi(x;B)$. As for any $\varphi \in \Phi$, $b \in B$, $\varphi(x;b)$ is a boolean combination of formulas in $\Phi'(x;B)$, and all of these have a fixed truth value on $\Delta$, so does $\varphi(x;b)$.
\end{proof}

We now consider a few ways of counting the sizes of distal cell decompositions:
\begin{defn}\label{def_param_exp}
	Let $\mathcal{T}$ be a distal cell decomposition for the finite set of formulas $\Phi(x;y)$, whose cells are defined by formulas in the set $\Psi$.
	\begin{itemize}
	 	\item We say that $\mathcal{T}$ \emph{has $k$ parameters} if every formula in $\Psi$ is of the form $\psi(x;y_1,\dots,y_k)$.
	 	\item We say that $\mathcal{T}$ \emph{has exponent $r$} if $\abs{\mathcal{T}(B)}=\bigO(\abs{B}^r)$ for all finite $B \subseteq M^{\abs{y}}$.
	 \end{itemize}
\end{defn}

Note that even if $\mathcal{T}$ has $k$ parameters, not every formula $\psi$ used to define $\mathcal{T}$ needs to use all $k$ parameters. In practice, we will sometimes define distal cell decompositions using formulas with different numbers of variables, but as each distal cell decomposition is defined using finitely many formulas, we can just take $k$ to be the maximum number of parameters used by any one formula, and add implicit variables to the rest.

\begin{defn}\label{distal_density}
	Let $\Phi(x;y)$ be a finite set of formulas. Then define the \emph{distal density} of $\Phi$ to be the infimum of all reals $r\geq 0$ such that there exists a distal cell decomposition $\mathcal{T}$ of $\Phi$ of exponent $r$. If no $\mathcal{T}$ exists for $\Phi$, the distal density is defined to be $\infty$.
\end{defn}

\begin{problem}
	Note that if $\Phi$ has distal density $t$, it is not known if $\theta$ must have a distal cell decomposition of exponent precisely $t$.
\end{problem}

\begin{defn}
	We also define a shatter function $\pi_\mathcal{T}(n):=\max_{\abs{B}=n} \abs{\mathcal{T}(B)}$. The distal density of $\Phi$ can equivalently be defined as the infimum of
	$$\limsup_{n \to \infty}\frac{\log \pi_\mathcal{T}(n)}{\log n}$$
	over all distal cell decompositions $\mathcal{T}$ of $\Phi$, if any exist.
\end{defn}

\begin{prop}
	For any finite set of formulas $\Phi(x;y)$, $\pi_\mathcal{T}(n)\geq \pi^*_\Phi(n)$ for all $n \in \N$, and the distal density of $\Phi$ is at least $\mathrm{vc}^*(\Phi)$.
\end{prop}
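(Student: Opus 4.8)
The plan is to read both statements off directly from Proposition \ref{abstractsize}. The key observation is that a distal cell decomposition, by Definition \ref{def_dcd}, is a particular kind of abstract cell decomposition, so Proposition \ref{abstractsize} applies verbatim: for any distal cell decomposition $\mathcal{T}$ for $\Phi(x;y)$ and any finite $B \subseteq M^{\abs{y}}$, we have $\abs{\mathcal{T}(B)}\geq \pi^*_\Phi(B)$. First I would take the maximum over all $B$ with $\abs{B}=n$ on both sides: $\pi_\mathcal{T}(n)=\max_{\abs{B}=n}\abs{\mathcal{T}(B)}\geq \max_{\abs{B}=n}\pi^*_\Phi(B)=\pi^*_\Phi(n)$, which is the first claim.

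For the second claim, if $\Phi$ admits no distal cell decomposition, then its distal density is $\infty$ by Definition \ref{distal_density} and there is nothing to prove; so assume at least one distal cell decomposition $\mathcal{T}$ of $\Phi$ exists. Fix such a $\mathcal{T}$. Since $\pi_\mathcal{T}(n)\geq \pi^*_\Phi(n)\geq 1$ for all $n\geq 1$ and $\log$ is increasing, we get $\frac{\log \pi_\mathcal{T}(n)}{\log n}\geq \frac{\log \pi^*_\Phi(n)}{\log n}$ for all $n\geq 2$; passing to $\limsup_{n\to\infty}$ (which ignores the finitely many small values of $n$) yields $\limsup_{n\to\infty}\frac{\log \pi_\mathcal{T}(n)}{\log n}\geq \limsup_{n\to\infty}\frac{\log \pi^*_\Phi(n)}{\log n}=\mathrm{vc}^*(\Phi)$, the last equality being the alternative characterization of dual VC density recorded in its definition. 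Taking the infimum over all distal cell decompositions $\mathcal{T}$ of $\Phi$ then gives that the distal density of $\Phi$ is at least $\mathrm{vc}^*(\Phi)$. Equivalently, one can argue with exponents: if $\mathcal{T}$ has exponent $r$, then $\pi^*_\Phi(n)\leq \pi_\mathcal{T}(n)=\bigO(n^r)$, so $\mathrm{vc}^*(\Phi)\leq r$, and taking the infimum of such $r$ gives the same bound.

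There is no real obstacle here — the argument is essentially immediate from Proposition \ref{abstractsize} together with the definitions. The only points needing a moment of care are the degenerate cases, namely when $\Phi$ has no distal cell decomposition (handled by the convention that the distal density is then $\infty$) and the behavior at $n\in\{0,1\}$ where $\log n$ is not positive (handled by the fact that $\limsup$ depends only on the tail).
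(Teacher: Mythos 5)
Your proposal is correct and follows essentially the same route as the paper: apply Proposition \ref{abstractsize} to the distal cell decomposition $\mathcal{T}$ (as a special case of an abstract cell decomposition), take maxima over $\abs{B}=n$ to get $\pi_\mathcal{T}(n)\geq\pi^*_\Phi(n)$, compare the $\limsup$s, and take the infimum over all $\mathcal{T}$. The extra care you take with the degenerate cases (no decomposition existing, small $n$) is fine but not needed beyond what the paper records.
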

\begin{proof}
	By Proposition \ref{abstractsize}, for every distal cell decomposition $\mathcal{T}$, $\abs{\mathcal{T}(B)}\geq \abs{S^\Phi(B)}$. Thus
	$$\mathrm{vc}^*(\Phi)\leq \limsup_{n \to \infty}\frac{\log \pi^*_\Phi(n)}{\log n}\leq \limsup_{n \to \infty}\frac{\log \pi_\mathcal{T}(n)}{\log n}$$
	so after taking the infimum over all $\mathcal{T}$, the distal density is at least $\mathrm{vc}^*(\Phi)$.
\end{proof}

Also, just by defining $\Phi(x;y)$ to be $\{x=y\}$, where $\abs{x}=\abs{y}=d$, we see that $\abs{S^\Phi(B)}\geq \abs{B}^d$, so we see that for every $d$, there is a $\Phi$ with both VC- and distal densities at least $d$ in any structure.

\begin{eg}
 	Chernikov, Galvin and Starchenko found that if $\mathcal{M}$ is an $o$-minimal expansion of a field, and $\abs{x}=2$, then any $\Phi(x;y)$ admits a distal cell decomposition with $\abs{\mathcal{T}(B)}=\bigO(\abs{B}^2)$ for all finite $B$ \cite{cgs}. Thus the distal density of such a $\Phi$ is at most 2.
\end{eg}

So far, we have defined distal cell decompositions and distal density in the context of a particular structure. In fact, if $\Phi(x;y)$ is a finite set of $\mathcal{L}$-formulas, and $T$ a complete $\mathcal{L}$-theory, we will show that the distal density of $\Phi(x;y)$ is the same in every model of $T$, so we can define \emph{the distal density of $\Phi$ over} $T$ to be the distal density of $\Phi$ in any model of $T$. (This uses the fact that the formulas in $\Phi$ and the formulas defining a distal cell decomposition are required to be parameter-free.)

\begin{prop}
Let $\Phi(x;y)$ be a finite set of $\mathcal{L}$-formulas, and $\mathcal{M} \equiv \mathcal{M}'$ be elementarily equivalent $\mathcal{L}$-structures. Then if $\Phi$ admits a distal cell decomposition $\mathcal{T}$ in $\mathcal{M}$, the same formulas define a distal cell decomposition for $\Phi$ in $\mathcal{M}'$. Thus we can refer to $\mathcal{T}$ as being a distal cell decomposition for $\Phi$ over the theory $T = \mathrm{Th}(\mathcal{M})$. Also, the shatter function $\pi_{\mathcal{T}}$, and thus the distal exponent of $\mathcal{T}$ and the distal density of $\Phi$, will be equal for $\mathcal{M}$ and $\mathcal{M}'$, and can be viewed as properties of the theory $T$.
\end{prop}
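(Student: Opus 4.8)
The plan is to show that each clause of the definition of a distal cell decomposition for $\Phi$ (Definition \ref{def_dcd}) is expressible by sentences lying in $T := \mathrm{Th}(\mathcal{M})$, so that the property transfers to any $\mathcal{M}' \equiv \mathcal{M}$. Since $\mathcal{T}$ is determined by the \emph{parameter-free} formulas $\Psi(x;y_1,\dots,y_k)$ and $\theta_\psi(y;y_1,\dots,y_k)$ for $\psi \in \Psi$, these data make literal sense in $\mathcal{M}'$, and the only thing to check there is that $\mathcal{T}$ remains an abstract cell decomposition for $\Phi$: for every finite $B \subseteq M^{\abs{y}}$, (a) each $\Delta \in \mathcal{T}(B)$ is not crossed by $\Phi(x;B)$, and (b) the cells of $\mathcal{T}(B)$ cover $M^{\abs{x}}$. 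Write $\bar y$ for $(y_1,\dots,y_k)$.

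For (a), the point is that whether a potential cell $\Delta = \psi(M^{\abs{x}};\bar b)$ is crossed by $\varphi(x;b)$ depends only on $\Delta$ and $b$, and that $\Delta$ belongs to $\mathcal{T}(B)$ with $b \in B$ for \emph{some} finite $B$ precisely when no coordinate of $\bar b$, and $b$ itself, lies in $\mathcal{I}(\Delta) = \theta_\psi(M^{\abs{y}};\bar b)$ --- one direction by taking $B = \{b_1,\dots,b_k,b\}$, the other because $B \cap \mathcal{I}(\Delta) = \emptyset$ and $b_1,\dots,b_k,b \in B$. Hence (a) is equivalent to the schema consisting, for each $\psi \in \Psi$ and $\varphi \in \Phi$, of the sentence
\[
\forall y_1 \cdots \forall y_k\, \forall y\,\Big[\Big(\bigwedge_{i=1}^{k}\neg\theta_\psi(y_i;\bar y)\ \wedge\ \neg\theta_\psi(y;\bar y)\Big)\to\big(\forall x(\psi(x;\bar y)\to\varphi(x;y))\ \vee\ \forall x(\psi(x;\bar y)\to\neg\varphi(x;y))\big)\Big],
\]
whose conclusion says exactly that $\psi(M^{\abs{x}};\bar y)$ is not crossed by $\varphi(x;y)$.

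For (b), there is no single sentence since $B$ has unbounded size, but the requirement is equivalent to the schema $\{\sigma_n : n \geq 1\}$, where $\sigma_n$ is
\[
\forall y_1 \cdots \forall y_n\, \forall x\ \bigvee_{\psi \in \Psi}\ \bigvee_{f\colon\{1,\dots,k\}\to\{1,\dots,n\}}\Big(\psi(x;y_{f(1)},\dots,y_{f(k)})\ \wedge\ \bigwedge_{j=1}^{n}\neg\theta_\psi(y_j;y_{f(1)},\dots,y_{f(k)})\Big).
\]
Writing a finite nonempty $B$ as $\{b_1,\dots,b_n\}$, the disjunct for $(\psi,f)$ says precisely that $\psi(M^{\abs{x}};b_{f(1)},\dots,b_{f(k)}) \in \mathcal{T}(B)$ and contains the prescribed point of $M^{\abs{x}}$, so $\{\sigma_n : n \geq 1\}$ is exactly (b). Since (a) and (b) hold in $\mathcal{M}$, all these sentences lie in $T$ and hence hold in $\mathcal{M}'$, so the same formulas define a distal cell decomposition for $\Phi$ in $\mathcal{M}'$; as $\equiv$ is symmetric, being a distal cell decomposition for $\Phi$ is a property of $T$ alone.

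Finally, for the shatter function and the densities: $\mathcal{T}(B) \subseteq \Psi(B)$ and $\abs{\Psi(B)} \leq \abs{\Psi}\cdot\abs{B}^{k}$, so each $\pi_\mathcal{T}(n)$ is finite, and it is enough to see that ``$\pi_\mathcal{T}(n) \geq m$'' is first-order for all $n,m$. This asserts that there is a $B$ with $\abs{B}=n$ and at least $m$ pairwise distinct cells in $\mathcal{T}(B)$: introduce variables $y_1,\dots,y_n$ forced to be pairwise distinct, choose data $(\psi_1,f_1),\dots,(\psi_m,f_m)$ describing $m$ cells (finitely many options, hence a finite disjunction), assert as in $\sigma_n$ that each chosen cell lies in $\mathcal{T}(\{y_1,\dots,y_n\})$, assert that cells $\psi(M^{\abs{x}};\bar c)$ and $\psi'(M^{\abs{x}};\bar{c}')$ are distinct via $\exists x\big((\psi(x;\bar c)\wedge\neg\psi'(x;\bar{c}'))\vee(\neg\psi(x;\bar c)\wedge\psi'(x;\bar{c}'))\big)$, and existentially quantify $y_1,\dots,y_n$ --- a first-order sentence. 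Hence $\pi_\mathcal{T}$ agrees in $\mathcal{M}$ and $\mathcal{M}'$, so the distal exponent of $\mathcal{T}$ (the $\limsup$ of $\log\pi_\mathcal{T}(n)/\log n$) agrees, and, taking the infimum over all $\mathcal{T}$ using the first part, so does the distal density of $\Phi$. The clause needing care is (b): the covering requirement is not finitely axiomatizable in isolation, so one has to work with the infinite schema $\{\sigma_n\}$ and verify that $\sigma_n$ faithfully captures ``$\mathcal{T}(B)$ covers $M^{\abs{x}}$ for every $B$ with $\abs{B} \leq n$''; once one also notes that ``$\Delta$ is not crossed by $\varphi(x;b)$'' and ``two cells are distinct as subsets of $M^{\abs{x}}$'' are first-order, the rest is bookkeeping.
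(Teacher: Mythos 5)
Your proposal is correct and follows essentially the same strategy as the paper: express the covering requirement by a schema of sentences indexed by the parameter-set size, express the non-crossing requirement by first-order sentences, and show the value of $\pi_\mathcal{T}(n)$ is first-order expressible so that it transfers under elementary equivalence. The only differences are cosmetic (your non-crossing sentence quantifies over just $k+1$ parameters, and you encode ``$\pi_\mathcal{T}(n)\geq m$'' via pairwise-distinct witness cells rather than the paper's ``$\leq m$'' encoding), and both are sound.
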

\begin{proof}
Let $\mathcal{T}$ be a distal cell decomposition for $\Phi$ over $\mathcal{M}$, consisting of a set $\Psi(x;y_1,\dots,y_k)$ of formulas, and a formula $\theta_\psi$ for each $\psi \in \Psi$ (as in Definition \ref{def_dcd}). Then to verify that the same formulas define a distal cell decomposition for $\Phi$ over $\mathcal{M}'$, we must simply check that for all finite $B \subset M'^{\abs{y}}$, the set of cells $\mathcal{T}(B)$ covers $M'^{\abs{x}}$, and that no cell of $\mathcal{T}(B)$ is crossed by $\Phi(x;B)$.

It is enough to show that these facts can be described with first-order sentences. Fix some natural number $n$, and we will find a first-order sentence that shows that for all $B = \{b_1,\dots, b_n\}$, the cells of $\mathcal{T}(B)$ cover the space and are not crossed. We can encode that the cells of $\mathcal{T}(B)$ cover $M'^{\abs{x}}$ with the sentence
$$\forall y_1,\dots, y_n, \forall x, \bigwedge_{\psi \in \Psi, i_1,\dots,i_k \in \{1,\dots,n\}}\psi(x;y_{i_1},\dots,y_{i_k}) \wedge \bigwedge_{i = 1}^n \neg \theta_\psi(y_i;y_{i_1},\dots,y_{i_k}).$$
When interpreted over $\mathcal{M}'$, this simply states that for any choice of $n$ parameters $b_1,\dots, b_n$ and any $x_0 \in M'^{\abs{x}}$, there is some $\psi, i_1,\dots,i_k$ such that $\psi(x;b_{i_1},\dots,b_{i_k})$ defines a valid cell, which contains $x_0$.
Similarly, to show that the cell defined by $\psi(x;b_{i_1},\dots,b_{i_k})$, if included in the cell decomposition, is not crossed by $\Phi(x;B)$, we can use the following sentence, showing that for all $B = \{b_1,\dots, b_n\}$, if for some $i$ and some $\varphi \in \Phi$, $\phi(x;b_i)$ crosses $\psi(x;b_{i_1},\dots,b_{i_k})$, then $\psi(x;b_{i_1},\dots,b_{i_k})$ is not a valid cell:
\begin{align*}
\forall y_1,\dots, y_n, \left(\bigvee_{\varphi \in \Phi, 1 \leq i \leq n} \exists x_1, x_2, \varphi(x_1;y_i) \wedge \neg \varphi(x_2;y_i) \wedge \psi(x_1;y_{i_1},\dots,y_{i_k}) \wedge \psi(x_2;y_{i_1},\dots,y_{i_k})\right) \\
\to \bigvee_{i = 1}^n \theta_\psi(y_i;y_{i_1},\dots,y_{i_k}).
\end{align*}

Now it suffices to show that the shatter function $\pi_{\mathcal{T}}$ is the same in both models, as the distal exponent of $\mathcal{T}$ and distal density of $\Phi$ are defined in terms of these shatter functions. 

To say that $\pi_\mathcal{T}(n) \leq m$ in $\mathcal{M}$ is to say that for all $b_1,\dots, b_n \in M^{\abs{y}}$, there are at most $m$ cells in $\mathcal{T}(B)$. This is the disjunction of a finite number of cases, which we will index by $A_1,\dots,A_m$, where each $A_i \subset \Psi \times \{1,\dots,n\}^k$, as each tuple $t = (\psi_t, t_1,\dots,t_n) \in \Psi \times \{1,\dots,n\}^k$ corresponds to a potential cell $\Delta_s = \psi_t(x;b_{t_1},\dots,b_{t_n})$. Then in the case indexed by $A_1,\dots, A_m$, there is a first-order sentence stating that for all $1 \leq i \leq n$ and $s,t \in A_i$, the formulas $\Delta_s$ and $\Delta_t$ are equivalent, and for all tuples $t = (\psi, i_1,\dots,i_n)$ not contained in any $A_i$, $t$ is not a valid cell, as implied by $\bigvee_{j =1}^n\theta_\psi(b_j;b_{i_1},\dots,b_{i_n})$. The disjunction of all these sentences states that there are at most $m$ distinct cells in $\mathcal{T}(\{b_1,\dots,b_n\})$, and if $b_1,\dots,b_n$ are replaced with universally-quantified variables, we find a sentence that states that $\pi_{\mathcal{T}}(n)\leq m$. Thus for all $n$, $\pi_{\mathcal{T}}(n)$ evaluates to the same number over any model of the theory of $\mathcal{M}$.
\end{proof}

Distality of a theory was defined originally in terms of indiscernible sequences in \cite{simon}. We will not present that definition here, but we will take the following equivalence as a definition:
\begin{fact}
	The following are equivalent for any first-order structure $\mathcal{M}$:
	\begin{enumerate}
		\item $\mathcal{M}$ is distal.
		\item For every formula $\phi(x;y)$, $\{\phi\}$ admits a distal cell decomposition.
		\item For every finite set of formulas $\Phi(x;y)$, $\Phi$ admits a distal cell decomposition.
	\end{enumerate}
\end{fact}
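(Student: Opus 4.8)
The plan is to establish the cycle $(3)\Rightarrow(2)\Rightarrow(3)$ and then $(1)\Leftrightarrow(2)$, the latter being the genuinely model-theoretic input. The implication $(3)\Rightarrow(2)$ is immediate, since a singleton $\{\phi\}$ is a finite set of formulas.

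For $(2)\Rightarrow(3)$ I would induct on $\abs{\Phi}$. The cases $\Phi=\emptyset$ (the trivial decomposition $\{M^{\abs{x}}\}$) and $\abs{\Phi}=1$ (hypothesis $(2)$) are the base, so it remains to combine distal cell decompositions $\mathcal{T}_1,\mathcal{T}_2$ for finite sets $\Phi_1,\Phi_2$ into one for $\Phi_1\cup\Phi_2$. Writing $\Psi_i$ for the cell formulas of $\mathcal{T}_i$ and $\theta_\psi$ for the associated selectors, I would take $\Psi:=\{\psi_1\wedge\psi_2:\psi_1\in\Psi_1,\ \psi_2\in\Psi_2\}$, where $\psi_1\wedge\psi_2$ acts on the disjoint concatenation of the two parameter tuples, with selectors $\theta_{\psi_1\wedge\psi_2}:=\theta_{\psi_1}\vee\theta_{\psi_2}$ on the corresponding variables. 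A direct check shows that for finite $B$ the resulting decomposition has $\mathcal{T}(B)=\{\Delta_1\cap\Delta_2:\Delta_1\in\mathcal{T}_1(B),\ \Delta_2\in\mathcal{T}_2(B)\}$: these cells cover $M^{\abs{x}}$ because $\mathcal{T}_1(B)$ and $\mathcal{T}_2(B)$ do, and $\Delta_1\cap\Delta_2$ is not crossed by $(\Phi_1\cup\Phi_2)(x;B)$ since it is contained in each $\Delta_i$, which is not crossed by $\Phi_i(x;B)$. (The only caveat is the usual mild abuse in Definition \ref{def_dcd} of indexing $\mathcal{I}$ by a cell rather than by its defining instance; it causes no trouble here.) Iterating this gives $(2)\Rightarrow(3)$.

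For $(1)\Leftrightarrow(2)$ I would invoke the indiscernible-sequence definition of distality from \cite{simon} together with the Chernikov--Simon theory of strong honest definitions (used in \cite{cgs}; see also \cite{cs}). For $(1)\Rightarrow(2)$: a compactness argument shows that if $\phi(x;y)$ admitted no formula $\psi(x;y_1,\dots,y_k)$ with the property that over every finite $B$ each $a\in M^{\abs{x}}$ lies in some instance $\psi(M^{\abs{x}};\bar b)$, $\bar b$ from $B$, not crossed by $\phi(x;B)$, then one could assemble an indiscernible sequence and a parameter witnessing the failure of distality. Given such a $\psi$, a second, definability, argument produces the selector formulas $\theta_\psi$, upgrading this "honest definition" to an actual distal cell decomposition for $\{\phi\}$ in the sense of Definition \ref{def_dcd}. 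For $(2)\Rightarrow(1)$ I would argue contrapositively: a failure of distality localizes to a single formula $\phi$ and yields, for each $n$, a finite parameter set of size $n$ over which no family of cells drawn from any fixed finite $\Psi$ can simultaneously cover $M^{\abs{x}}$ and avoid being crossed, contradicting the existence of a distal cell decomposition for $\{\phi\}$.

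The main obstacle is $(1)\Rightarrow(2)$: extracting, from a statement about infinite indiscernible sequences, the finitary and fully uniform "cover-and-don't-cross" data of Definition \ref{def_dcd} over \emph{all} finite $B$, and then making the choice of cell itself definable. This is precisely what the strong-honest-definitions machinery supplies, so in practice I would cite it rather than reprove it, and regard the content to be written out here as the bookkeeping for $(2)\Leftrightarrow(3)$.
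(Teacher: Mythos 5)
Your proposal is correct and takes essentially the same route as the paper: the paper also cites the Chernikov--Simon result for the equivalence of (1) and (2), notes that (3) trivially implies (2), and proves (2) implies (3) by intersecting cells of the individual decompositions, taking $\Psi$ to be conjunctions of the cell formulas and $\mathcal{I}(\Delta)=\bigcup_{\phi}\mathcal{I}(\Delta_\phi)$ (your disjunction of selectors). Your pairwise induction on $\abs{\Phi}$ is just an iterated version of the paper's all-at-once intersection construction.
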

\begin{proof}
	The equivalence of (1) and (2) is by \cite{c-simon} (see \cite[Fact 2.9]{cgs} for a discussion). Clearly (3) implies (2), so it suffices to show that (2) implies (3).

	For a given $\Phi(x;y)$, assume each $\phi\in \Phi$ admits a distal cell decomposition $\mathcal{T}_\phi$. Then for finite $B\subseteq M^{\abs{y}}$, we define $\mathcal{T}(B)$ to consist of all nonempty intersections $\bigcap_{\phi \in \Phi}\Delta_\phi$, where each $\Delta_\phi$ is chosen from $\mathcal{T}_\phi(B)$. These cells will cover $M^{\abs{x}}$, as each $a \in M^{\abs{x}}$ belongs to some $\Delta_\phi$ for each $\phi$, and thus belongs to their intersection. Any cell $\Delta = \bigcap_{\phi \in \Phi}\Delta_\phi$ will not be crossed by $\Phi(x;B)$, as for each $\phi \in \Phi$, as $\Delta \subset \Delta_\phi$, and $\Delta_\phi$ is not crossed by $\phi(x;B)$.

	Now we check that this cell decomposition is uniformly definable. For each $\phi \in \Phi$, let $\mathcal{T}_\phi$ consist of $\Psi_\phi$ and $\{\theta_\psi: \psi \in \Psi_\phi\}$. Then $\mathcal{T}$ can be defined by the set of formulas $\Psi$ consisting of all conjunctions $\bigwedge_{\phi \in \Phi} \psi_\phi$ where $\psi_\phi \in \Psi_\phi$ for each $\phi$. For a given $\Delta = \bigcap_{\phi \in \Phi}\Delta_\phi$, we can let $\mathcal{I}(\Delta) = \bigcup_{\phi \in \Phi} \mathcal{I}(\Delta_\phi)$.
\end{proof}

Examples of distal structures include:
\begin{itemize}
	\item $o$-minimal structures
	\item Presburger arithmetic $(\Z,0,+,<)$
	\item The field of $p$-adics $\Q_p$ and other $P$-minimal fields.
	\item The linear reduct of $\Q_p$, in the language $\mathcal{L}_{\mathrm{aff}}$.
\end{itemize}
For justification of the first three of these, see \cite{cs}. The distality of these structures is established using the indiscernible sequence definition, which does not provide good bounds. In what follows, we will construct explicit distal cell decompositions for all of these examples.

\section{Dimension Induction}
In this section, we provide a bound on the size of distal cell decompositions for all dimensions, given a bound for distal cell decompositions for a fixed dimension in an arbitrary distal structure. This allows us to bound the size of a distal cell decomposition for any finite family of formulas in several kinds of distal structures, including any $o$-minimal structures. This approach is inspired by the partition construction in \cite{chaz91}, which can be interpreted as constructing distal cell decompositions in the context of $\R$ as an ordered field. (It also improves the bound in \cite[Proposition 1.9]{acgz}.)

\begin{thm}
\label{dimensioninduction}
	Let $\mathcal{M}$ be a structure in which all finite sets $\Phi(x;y)$ of formulas with $\abs{x}=1$ admit a distal cell decomposition with $k$ parameters (see Definition \ref{def_param_exp}), and for some $d_0\in \N$, all finite sets $\Phi(x;y)$ of formulas with $\abs{x}=d_0$ admit distal cell decompositions of exponent at most $r$. Then all finite sets $\Phi(x;y)$ of formulas with $\abs{x}=d\geq d_0$ admit distal cell decompositions of exponent $k(d-d_0)+r$.
\end{thm}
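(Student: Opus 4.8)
The plan is to induct on $d \geq d_0$; it suffices to prove the inductive step, going from $d$ to $d+1$, i.e.\ to show that if every finite $\Phi(x;y)$ with $\abs{x}=d$ admits a distal cell decomposition of exponent $k(d-d_0)+r$, then every finite $\Phi(x;y)$ with $\abs{x}=d+1$ admits one of exponent $k(d+1-d_0)+r = k + (k(d-d_0)+r)$. So fix $\Phi(x;y)$ with $\abs{x}=d+1$, and split the variable $x$ as $x = (x', x_{d+1})$ with $\abs{x'}=d$, $\abs{x_{d+1}}=1$. The idea, following the vertical/stratification decomposition: first decompose the last coordinate using the $1$-dimensional hypothesis, with the first $d$ coordinates absorbed into the parameters, and then decompose the remaining $d$-dimensional "base" using the inductive hypothesis.

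Concretely, first I would consider the formulas of $\Phi$ as formulas $\phi(x_{d+1}; x', y)$ in the single variable $x_{d+1}$ with parameters $(x',y)$; by hypothesis this finite set admits a distal cell decomposition $\mathcal{S}$ with $k$ parameters, defined by some $\Psi^{(1)}(x_{d+1}; (x',y)_1, \dots, (x',y)_k)$ together with formulas $\theta^{(1)}_\psi$. For a finite $B \subseteq M^{\abs{y}}$, the parameter set in these $1$-dimensional decompositions should be $\{(a,b) : a \in M^d, b \in B\}$, but that is infinite; the standard fix is that the cells and the "bad" sets $\mathcal{I}$ depend only on finitely many of the coordinates $a$ through the boundary structure. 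More precisely, for each fixed tuple $\bar a = (a_1,\dots,a_k) \in (M^d)^k$ and each $\bar b \in B^k$, the $1$-dimensional cells $\psi(M; (a_1,b_1),\dots,(a_k,b_k))$ partition the $x_{d+1}$-line, and their endpoints, as $\bar a$ varies, trace out finitely many definable "wall" sets over $B$ in the variables $x'$. I would introduce a finite set $\Xi(x'; y_1,\dots,y_k)$ of $d$-dimensional formulas capturing, for each $\psi \in \Psi^{(1)}$ and each $\theta^{(1)}_\psi$, the locus in $x'$ where the combinatorial type of the $1$-dimensional decomposition (which cells exist, which contain a given quantifier pattern, which are killed by $\theta^{(1)}_\psi$) is constant. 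Apply the inductive hypothesis to $\Phi(x';y) \cup \Xi(x';y_1,\dots,y_k)$ — a finite set of formulas in $d$ variables — to get a distal cell decomposition $\mathcal{R}$ of exponent $k(d-d_0)+r$ for it (for the parameter count, use the remark after Definition \ref{def_param_exp} to absorb the extra $y_i$'s). The cells of $\mathcal{T}(B)$ are then the sets $\Delta' \times C$, where $\Delta'$ ranges over $\mathcal{R}(B)$ and, for each such $\Delta'$, $C$ ranges over the $1$-dimensional cells of the decomposition $\mathcal{S}$ which is combinatorially constant over $\Delta'$ (this is well-defined because $\Delta'$ does not cross any formula of $\Xi$).

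Then I would verify the two defining properties. Covering is immediate: every $(a', a_{d+1})$ lies in some $\Delta' \in \mathcal{R}(B)$ containing $a'$, and then $a_{d+1}$ lies in one of the $1$-dimensional cells over $\Delta'$. No cell $\Delta' \times C$ is crossed by $\Phi(x;B)$: given $\phi \in \Phi$ and $b \in B$, and two points $(a',a_{d+1}), (a', a'_{d+1})$ in the cell with the same $a'$, they agree on $\phi$ because $C$ is a cell of the $1$-dimensional decomposition for the parameter $(a',b)$ hence not crossed by $\phi(x_{d+1};a',b)$; and for two points $(a',a_{d+1}),(a'',a''_{d+1})$ in the cell, one compares via an intermediate point using that $\Delta'$ is a cell for $\Phi(x';B)$ (so $\phi^{*}$-type over $B$ is constant on $\Delta'$ in the sense needed — here one should be slightly careful, since $\phi(x';x_{d+1},b)$ is a formula in $x'$ with parameter $(x_{d+1},b)$, so $\Phi$ must be pre-processed by adding these "reversed" formulas to the $d$-dimensional family as well, via Lemma \ref{bool_comb}). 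Definability is clear since everything is built from the parameter-free $\Psi^{(1)}, \theta^{(1)}_\psi, \Xi$, and the data of $\mathcal{R}$, and $\mathcal{I}(\Delta' \times C)$ is the union of $\mathcal{I}_{\mathcal{R}}(\Delta')$ with the appropriate instance of $\theta^{(1)}_\psi$. For the size bound: $\abs{\mathcal{R}(B)} = \bigO(\abs{B}^{k(d-d_0)+r})$, and for each $\Delta'$ the number of $1$-dimensional cells over it is $\bigO(\abs{B}^k)$ — because those cells have the form $\psi(M;(a_1,b_1),\dots,(a_k,b_k))$ with the $b_i$ ranging over $B$ and the $a_i$ effectively determined up to combinatorial type by $\Delta'$, so there are $O(\abs B^k)$ of them — giving $\abs{\mathcal{T}(B)} = \bigO(\abs{B}^{k + k(d-d_0)+r}) = \bigO(\abs{B}^{k(d+1-d_0)+r})$, as desired.

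The main obstacle is the bookkeeping around the $1$-dimensional decomposition having parameters $(x', b)$ in which $x'$ is a \emph{variable}, not a parameter: one must extract from $\mathcal{S}$ a finite family $\Xi$ of $d$-variable formulas whose non-crossing over a cell $\Delta'$ genuinely guarantees that "the $1$-dimensional picture is constant over $\Delta'$", and then define $\mathcal{T}$ so that each product cell inherits non-crossing in \emph{both} the fibre direction and the base direction simultaneously. Getting $\Xi$ right — capturing existence of each cell, containment of witness points, and the vanishing pattern of each $\theta^{(1)}_\psi$, all as quantifier-patterned formulas in $x'$ — and then checking the cross-fibre non-crossing argument, is the delicate part; the size count and the covering property are routine once the construction is set up.
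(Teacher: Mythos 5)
Your high-level plan is the same as the paper's: split off one coordinate, use the one-variable hypothesis fibrewise with the base point absorbed into the parameters, decompose the base by induction, and count $\bigO(\abs{B}^k)$ fibre cells over each base cell. You also correctly identify the cross-fibre non-crossing as the delicate point — but that point is the actual content of the theorem, and the two concrete devices you offer in its place do not work. To compare $\phi(\cdot;b)$ across different fibres you propose adding the ``reversed'' formulas $\phi(x';x_{d+1},b)$ to the $d$-variable family; but there $x_{d+1}$ sits in the parameter slot and ranges over all of $M$, while the base decomposition only guarantees non-crossing by instances over a finite parameter set, so this gives nothing. The mechanism the paper uses is different and essential: for each fibre-cell formula $\psi$, the base family $\Phi_\psi$ contains the repartitioned $\theta_\psi(x',y;y_1,\dots,y_k)$ together with the quantified formulas $\forall x_1\,(\psi(x_1;x',y_1,\dots,y_k)\to\phi(x_1,x';y))$ and $\forall x_1\,(\psi(x_1;x',y_1,\dots,y_k)\to\neg\phi(x_1,x';y))$. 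These express, as conditions on the base point $x'$ with parameters $(b_1,\dots,b_k,b)$ only, that $\phi(\cdot,x';b)$ holds (respectively fails) on the entire fibre cell, and non-crossing of exactly these formulas is what transfers truth values from one fibre to another; mere constancy of the ``combinatorial type'' of the fibre decomposition, which is what your $\Xi$ describes, is not enough and is never pinned down as a concrete finite set of formulas.

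There is also a counting gap that your construction, as stated, cannot repair. Your base cells must be uncrossed by instances $\xi(x';b_1,\dots,b_k)$ with $(b_1,\dots,b_k)$ ranging over $B^k$; if you achieve this by one decomposition $\mathcal{R}$ of $\Phi\cup\Xi$ against that parameter set, the inductive hypothesis gives $\bigO\bigl((\abs{B}^k)^{k(d-d_0)+r}\bigr)$ cells, not the $\bigO(\abs{B}^{k(d-d_0)+r})$ you claim for $\abs{\mathcal{R}(B)}$. The paper avoids this by running, for each $\psi$ and each \emph{fixed} tuple $(b_1,\dots,b_k)\in B^k$, a separate base decomposition over the parameter set $\{(b_1,\dots,b_k)\}\times B$ of size $\abs{B}$, and attaching to each of its cells only the fibre cell defined by that same tuple; the factor $\abs{B}^k$ then comes from the number of tuples, yielding the exponent $k+\bigl(k(d-d_0)+r\bigr)$. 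Relatedly, the cells should be cylindrical sets $\{(x',x_{d+1}): x'\in\Delta',\ \psi(x_{d+1};x',b_1,\dots,b_k)\}$, not literal products $\Delta'\times C$, since the fibre cell moves with the base point: the paper sets all the $x'$-parameters of the fibre decomposition equal to the base point itself, which is also what makes the fibre-cell count $\bigO(\abs{B}^k)$ rather than leaving the auxiliary tuples $a_1,\dots,a_k\in M^d$ (which you leave ``effectively determined up to combinatorial type'') unaccounted for.
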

\begin{proof}
The case with $d=d_0$ follows directly from the assumptions, so we can proceed by induction. Assume the result for all finite sets of formulas with $\abs{x}=d-1 \geq d_0$. Then we will build a distall cell decomposition for a $\Phi(x;y)$ with $\abs{x}=d$. Where $x=(x_1,\dots,x_d)$, let $x'=(x_2,\dots,x_d)$. We start by fixing a distal cell decomposition $\mathcal{T}_1$ for the set of formulas $\Phi_1(x_1;x',y) := \{\phi(x_1;x',y):\phi(x;y) \in \Phi\}$. Let the cells of $\mathcal{T}_1$ be defined by $\Psi_1(x_1;x'_1,y_1,\dots, x'_k, y_k)$ and a formula $\theta_\psi(x',y;x'_1,y_1,\dots,x'_k,y_k)$ for each $\psi \in \Psi_1$. For this construction, we will only use $\mathcal{T}_1$ to define $\Phi_1$-types over sets of the form $\{a'\}\times B$. Because each element of that set has the same first coordinate, we will abbreviate the formula $\psi(x_1;x'_1,y_1,x'_2,y_2,\dots,x'_1,y_k)$ as $\psi(x_1;x',y_1,y_2,\dots,y_k)$, assuming all the variables $x'_i$ are equal. Similarly, we abbreviate $\theta_\psi(x',y;x'_1,y_1,\dots,x'_k,y_k)$ as $\theta_\psi(x',y;y_1,\dots,y_k)$, setting each $x'_i$ equal to $x'$. We will also want to repartition the variables, setting $\theta_\psi*(x';y_1,\dots,y_k,y) := \theta_\psi(x',y;y_1,\dots,y_k)$.

For each $\psi \in \Psi_1$, let $\Phi_\psi(x';y_1,\dots,y_k,y)$ be the set of formulas consisting of $\theta_\psi*$ and all formulas of the form $\A x_1, \psi(x_1;x',y_1,\dots,y_k) \to \square\phi(x_1,x';y)$ where $\phi \in \Phi$, and $\square$ is either $\neg$ or nothing.

Then let $\mathcal{T}_\psi$ be a distal cell decomposition for $\Phi_\psi$, consisting of $\Psi_\psi$ and a formula $\theta_{\psi'}$ for each $\psi' \in \Psi_\psi$. As before, we will assume some of the variables are equal, and write these formulas more succinctly, assuming that our set of parameters is of the form $\{(b_1,\dots,b_k)\}\times B$ for some $b_1,\dots,b_k \in M$ and finite $B \subseteq M^{\abs{y}}$. This allows us to write each $\psi' \in \Psi_\psi$ as $\psi'(x';y_1,\dots,y_k,y_1',\dots,y_m')$, and write $\theta_{\psi'}$ as $\theta_{\psi'}(y;y_1,\dots,y_k,y_1',\dots,y_m')$.

For each $\psi \in \Psi_1$ and $\psi' \in \Psi_\psi$, let $\psi\tensor \psi'(x;y_1,\dots,y_k,y'_1,\dots,y_m')$ be the formula $$\psi'(x';y_1,\dots,y_k,y'_1,\dots,y_m')\wedge \psi(x_1;x',y_1,\dots,y_k).$$
(Intuitively, this defines a sort of cylindrical cell in $M^{\abs{x}}$, where $x'$ is in a cell of one cell decomposition of $M^{\abs{x'}}$, and $x_1$ is in a cell of a cell decomposition of $M$, defined using $x'$ as a parameter.) Let $\Psi(x;y_1,\dots,y_k,y'_1,\dots,y_m') = \{\psi \tensor \psi' : \psi \in \Psi_1, \psi' \in \Psi_\psi\}$. We will use $\Psi$ to define a distal cell decomposition $\mathcal{T}$ for $\Phi(x;y)$.

To define $\mathcal{T}$, it suffices to define $\theta_{\psi \tensor \psi'}$ for each $\psi \in \Psi_1, \psi' \in \Psi_\psi$. Define
\begin{align*}
	\theta_{\psi \tensor \psi'}&(y;y_1,\dots,y_k,y'_1,\dots,y'_m) :=\\
&\theta_{\psi'}(y;y_1,\dots,y_k,y_1',\dots,y_m') \wedge (\E x', \psi'(x';y_1,\dots,y_k,y'_1,\dots,y'_m) \wedge \theta_\psi(x'; y_1,\dots,y_k,y)).
\end{align*}
This means that if $\Delta$ is the cell $\psi \tensor \psi'(M^d;b_1,\dots,b_k,b'_1,\dots,b_m')$, then
\begin{align*}
	\mathcal{I}(\Delta) &:=
\{b \in M^{\abs{y}} : \mathcal{M}\models \theta_{\psi \tensor \psi'}(b; b_1,\dots,b_k,b'_1,\dots,b_m')\}\\
 &= \{b \in M^{\abs{y}} : \exists (a_1,a') \in \Delta, \mathcal{M}\models \theta_\psi(a',b;b_1,\dots,b_k)\}.
\end{align*} Thus for all $a'$ in the projection of $\Delta$ onto $M^{d-1}$, the fiber $\{a_1 \in M: (a_1,a') \in \Delta\}$ is a cell of $\mathcal{T}_1(\{a'\}\times B)$ if and only if $B \cap \mathcal{I}(\Delta)=\emptyset$.

Now we show that this definition of $\mathcal{T}$ gives a valid distal cell decomposition for $\Phi(x;y)$. Fix a finite $B \subset M^{\abs{y}}$ and let $a \in M^d$ be given. Firstly, each element of $M^d$ is contained in a cell. If $a=(a_1,a')$ with $a_1 \in M, a' \in M^{d-1}$, then $a_1$ is in some cell of $\mathcal{T}_1(\{a'\}\times B)$, and that cell is defined by some $\psi(x_1;a',b_1,\dots,b_k)$, so for all $b \in B$, $\mathcal{M} \models \neg \theta_\psi*(a';b_1,\dots,b_k,b)$. Therefore $a'$ is in some cell of $\mathcal{T}_\psi(\{(b_1,\dots,b_k)\}\times B)$ on which $\mathcal{M} \models \neg \theta_\psi*(x';b_1,\dots,b_k,b)$. If that cell is defined by $\psi'(b_1,\dots,b_k,b_1',\dots,b_m')$, then we can now define a cell containing $a$ by $\psi \otimes \psi' (x;b_1,\dots,b_k,b_1',\dots,b_m')$.

Secondly, we show that each cell of $\mathcal{T}(B)$ is not crossed by $\Phi(x;B)$. Fix a cell $\Delta \in \mathcal{T}(B)$, and fix $\phi \in \Phi$, $b \in B$. We know that for each $a'$ in the projection of $\Delta$ onto $M^{d-1}$, the fiber $\{a_1 \in M: (a_1,a') \in \Delta\}$ is a cell of $\mathcal{T}_1(\{a'\} \times B)$, so that fiber is not crossed by $\phi(x;B)$. We also guaranteed that if $\Delta$ is defined by the formula $\psi \tensor \psi'(x,b_1,\dots,b_k,b_1',\dots,b_m')$, then the projection of $\Delta$ onto $M^{d-1}$ is a cell of $\mathcal{T}_{\psi}(B)$, so it is not crossed by the formulas $\A x_1, \psi(x_1; x',b_1,\dots,b_k) \to \phi(x_1,x';b)$ and $\A x_1, \psi(x_1; x',b_1,\dots,b_k) \to \neg \phi(x_1,x';b)$. If for some $(a_1,a')$ in $\Delta$, $\mathcal{M}\models \phi(a_1,a';b)$, then $\mathcal{M}\models \A x_1, \psi(x_1; x',b_1,\dots,b_k) \to \phi(x_1,x';b)$ for $x' = a'$, and thus for all $x'$ in the projection of $\Delta$, so $\mathcal{M}\models \psi(x;b)$ for all $x \in \Delta$.

Finally we can count the number of cells of $\mathcal{T}(B)$. For each $\psi \in \mathcal{T}_1$, and each $b_1,\dots,b_k$, there are, by induction, $\bigO(\abs{B}^{k((d-1)-d_0)+r})$ cells in $\mathcal{T}'(\{(b_1,\dots,b_k)\}\times B)$, each inducing a cell of $\mathcal{T}(B)$. Multiplying by the $\abs{B}^k$ possible tuples $(b_1,\dots,b_k) \in B^k$ and a finite number of formulas $\psi$, we get the desired bound $\bigO(\abs{B}^{k(d-d_0)+r})$.
\end{proof}

\section{Weakly $o$-Minimal Structures}
\label{omin}
In any structure $\mathcal{M}$, for any $n$, there is a formula $\phi(x;y)$ with $\abs{x} = n$ such that the the dual VC density of $\phi$ is $\abs{x}$, giving a lower bound on the distal density (see \cite[Section 1.4]{vcdensityi}). In this section, we construct an optimal distal cell decomposition for the case $\abs{x}=1$, and then use Theorem \ref{dimensioninduction} to construct distal cell decompositions for all $\Phi$, and bound their sizes. In the case where $\mathcal{M}$ is an $o$-minimal expansion of a group, we start instead with the optimal bound for $\abs{x} = 2$ from \cite{cs} and obtain a the bound on the size of the sign-invariant stratification in \cite{chaz91}, and improves 
the bounds on \cite[Theorem 4.0.9]{barone}.
\begin{thm}\label{ominthm}
	If $\Phi(x;y)$ is a finite family of formulas in a weakly $o$-minimal structure $\mathcal{M}$, then $\Phi$ admits a distal cell decomposition for $\Phi$ with exponent $2\abs{x}-1$.

	If $\mathcal{M}$ is an $o$-minimal expansion of a group and $\abs{x}\geq 2$, then the distal density is at most $2\abs{x}-2$.
\end{thm}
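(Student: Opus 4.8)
The plan is to derive both statements from Theorem \ref{dimensioninduction}, which requires two inputs: a distal cell decomposition in dimension $1$ using a bounded number $k$ of parameters, and a base case in some dimension $d_0$ with bounded exponent $r$. For the first statement I will construct the dimension-$1$ decomposition by hand, show it uses $k=2$ parameters and has exponent $1$, and take it also as the base case $d_0=1$, $r=1$; Theorem \ref{dimensioninduction} then gives exponent $k(d-d_0)+r = 2(\abs{x}-1)+1 = 2\abs{x}-1$. For the second statement, an $o$-minimal expansion of a group is in particular weakly $o$-minimal, so the same dimension-$1$ decomposition applies with $k=2$; but now I will feed in as base case the optimal bound for $\abs{x}=2$ from \cite{cs}, which supplies a distal cell decomposition of exponent $2$, so $d_0=2$, $r=2$, and Theorem \ref{dimensioninduction} yields exponent $2(\abs{x}-2)+2 = 2\abs{x}-2$ for all $\abs{x}\geq 2$.

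The substance is therefore the dimension-$1$ construction: given a finite $\Phi(x_1;y)$ with $\abs{x_1}=1$ in a weakly $o$-minimal $\mathcal{M}$, produce a distal cell decomposition with $2$ parameters and exponent $1$. The idea is the classical decomposition of the line by the endpoints of the defining sets, adapted to the fact that in a weakly $o$-minimal structure these endpoints are cuts rather than points of $M$. By weak $o$-minimality there is, for each $\phi\in\Phi$, a uniform bound $N$ so that $\phi(M;b)$ is a union of at most $N$ convex sets as $b$ varies (I will invoke the standard compactness argument for this, working over $\mathrm{Th}(\mathcal{M})$ so that the resulting formulas define a distal cell decomposition over the theory, as in the earlier propositions). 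For each $\phi$, each $i\leq N$, each of the two endpoints, and each of the finitely many ``strictly/weakly to the left/right'' options, I will write a parameter-free formula $\rho(x_1;y)$ expressing that $x_1$ lies on the prescribed side of that endpoint-cut of the $i$-th convex component of $\phi(M;y)$, with the convention that $\rho$ is unsatisfiable if $\phi(M;y)$ has fewer than $i$ components. The potential cells of $\Psi$ are then the conjunctions $\rho(x_1;y_1)\wedge\rho'(x_1;y_2)$ of two such formulas, together with the finitely many formulas needed for the two unbounded end pieces, so $\Psi$ uses $2$ parameters; for the cell $\Delta$ cut out by $\psi(x_1;y_1,y_2)$ I set $\theta_\psi(y;y_1,y_2):=\bigvee_{\phi\in\Phi}\exists z\,\exists z'\,(\psi(z;y_1,y_2)\wedge\psi(z';y_1,y_2)\wedge\phi(z;y)\wedge\neg\phi(z';y))$, so that $\mathcal{I}(\Delta)$ is exactly the set of $b$ for which some $\phi(x_1;b)$ crosses $\Delta$. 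The chosen cells $\mathcal{T}(B)=\{\Delta\in\Psi(B):B\cap\mathcal{I}(\Delta)=\emptyset\}$ are then precisely the pieces cut out between consecutive cuts of the family $\{\phi(M;b):\phi\in\Phi,b\in B\}$ (plus the singleton cells at cuts realized in $M$): any piece spanning a non-consecutive pair of cuts is crossed, and there are only $O(\abs{B})$ consecutive pairs, so $\abs{\mathcal{T}(B)}=O(\abs{B})$ even though $\abs{\Psi(B)}=O(\abs{B}^2)$, giving exponent $1$. Finally I will verify the two abstract cell decomposition conditions: covering (every $a\in M$ lies in the interval or singleton determined by the cuts adjacent to it, which is a non-crossed potential cell, hence included) and non-crossing (immediate from the choice of $\theta_\psi$).

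I expect the main obstacle to be the bookkeeping in this dimension-$1$ construction rather than any conceptual difficulty: writing a single parameter-free formula that uniformly isolates ``the $i$-th convex component of $\phi(M;y)$'', correctly handling the degenerate cases (fewer than $i$ components, empty or cofinite sets, endpoints that are or are not attained in $M$, singleton components), and then checking that every maximal non-crossed convex piece really is of one of the finitely many shapes in $\Psi$ for some pair of parameters drawn from $B$. A secondary point needing care is the uniform bound $N$ on the number of convex components, which I justify by compactness at the level of the theory; granting it, the covering and non-crossing conditions, the $2$-parameter count, and the linear bound on $\abs{\mathcal{T}(B)}$ all follow routinely, and feeding these into Theorem \ref{dimensioninduction} finishes both parts. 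For the group case no new parameter count is needed, since the dimension induction reads its constant $k$ only off the dimension-$1$ decomposition; I need only that the dimension-$2$ bound of \cite{cs} is witnessed by a genuine distal cell decomposition of exponent $2$ in the sense of Definition \ref{def_dcd}.
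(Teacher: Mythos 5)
Your proposal follows essentially the same route as the paper: the same cut-based, $2$-parameter, exponent-$1$ decomposition of the line in the weakly $o$-minimal dimension-$1$ case (the paper realizes your ``sides of endpoint cuts'' via the downward-closed sets $\varphi^i_{\leq},\varphi^i_{<}$ and the atoms of the boolean algebra they generate, with $\mathcal{I}(\Delta)$ defined exactly as your $\theta_\psi$), fed into Theorem \ref{dimensioninduction} with $(d_0,r,k)=(1,1,2)$, and with the $\abs{x}=2$ base case $(d_0,r)=(2,2)$ for the group statement. The one point the paper makes explicit that you gloss over is that the $\abs{x}=2$ distal cell decomposition of exponent $2$ in \cite{cgs} is stated for $o$-minimal expansions of fields, and one must observe that its proof only uses definable choice, which $o$-minimal expansions of ordered groups also have.
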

\begin{proof}
	In any weakly $o$-minimal structure, if $\Phi(x;y)$ has $\abs{x}=1$, then there exists a distal cell decomposition $\mathcal{T}$ with $\abs{\mathcal{T}(B)}=\bigO(\abs{B})$ with 2 parameters.

	Indeed, by weak $o$-minimality, for any $\varphi(x;y) \in \Phi$ with $\abs{x} = 1$, there is some number $N_\varphi$ such that the set $\varphi(M;b)$ is a union of at most $N_\varphi$ convex subsets for any $b \in M^{\abs{y}}$. Let $N:=\max_{\varphi \in \Phi}N_\varphi$. Then for each $\varphi(x;y) \in \Phi$, we can define formulas $\varphi^1(x;y),\dots,\varphi^N(x;y)$ by 
	\begin{align*}
		&\varphi^n(x;y) :=\\
		\E x_1,&x_2,\dots,x_{n-1},y_1,\dots,y_{n-1}, \varphi(x;y)\wedge (x_1 < y_1 < x_2 <\dots <y_{n-1}<x) \wedge \bigwedge_{i=1}^{n-1} (\varphi(x_i;y)\wedge \neg \varphi(y_i;y))
	\end{align*}
	and then
	$$\varphi(M;b)=\varphi^1(M;b)\cup \dots \cup \varphi^N(M;b)$$
	for all $b$, each $\varphi^i(M;b)$ is convex, and $\varphi^i(M;b) < \varphi^{i+1}(M;b)$ for each $i$, in the sense that for every $x_i \in \varphi^i(M;b)$ and $x_{i+1} \in \varphi^{i+1}(M;b)$, $x_i < x_{i+1}$.

	Then for each $\varphi \in \Phi$ we can also define 
	$$\varphi^i_\leq(x;y):=\E x_0(\varphi^i(x_0;y)\wedge x\leq x_0),$$
	$$\varphi^i_<(x;y):=\A x_0(\varphi^i(x_0;y)\to x< x_0).$$
		
	Note that each $\varphi^i_\square(M;b)$ for $\square\in \{<,\leq\}$ is closed downwards. Thus for any finite subset $B \subset M^{\abs{y}}$, the family of sets $\mathcal{F}(B)=\{\varphi^i_\square(M,b):b \in B,\varphi \in \Phi,1\leq i \leq N, \square \in \{<,\leq\}\}$ is linearly ordered under inclusion. Thus the atoms in the boolean algebra $\mathcal{B}$ generated by $\mathcal{F}(B)$ are of the form $X_1\setminus X_2$ where $X_1,X_2 \in \mathcal{F}(B)$ and $X_2$ is the unique maximal element of $\mathcal{F}(B)$ properly contained in $X_1$, or $\mathcal{M}\setminus X_1$ where $X_1$ is the unique maximal element of $\mathcal{F}(B)$. Thus only one atom of the boolean algebra can be of the form $X_1 \setminus X_2$ for each $X_1$, and thus the number of such atoms is at most $\abs{\mathcal{F}(B)}+1$, which is $\bigO(\abs{B})$.

	Now we construct $\mathcal{T}$. We let $\Psi$ consist of the formulas of the form
	$\psi(x;y_1,y_2):=\varphi^i_{\square_1}(x;y_1)\wedge \neg\varphi^j_{\square_2}(x;y_2)$ or $\psi(x;y):=\neg \varphi^j_{\square_1}(x;y)$ with $1\leq i \leq N, \square \in \{<,\leq\}$,
	and then for each potential cell $\Delta=\psi(M;b_1,b_2)$, let $\mathcal{I}(\Delta)$ just consist of all $b \in M^{\abs{y}}$ such that $\Delta$ is crossed by $\varphi_0(M;b)$ for some $\varphi_0 \in \Phi$. Then $\mathcal{T}(B)$ is exactly the set of atoms in the boolean algebra generated by $\mathcal{F}(B)$, so $\abs{\mathcal{T}(B)}=\bigO(\abs{B})$. Each cell is not crossed by any set in $\mathcal{F}(B)$, and thus not by any $\varphi(x;B)$, or $\Phi(x;B)$ itself, so this is a valid distal cell decomposition, where every cell is defined using at most 2 parameters from $B$.

	Thus we can use Theorem \ref{dimensioninduction}, setting $d_0=1$, $r=1$, and $k=2$, to find that any family of formulas $\Phi(x;y)$ has a distal cell decomposition of exponent at most $2(\abs{x}-1)+1=2\abs{x}-1$.

	If $\mathcal{M}$ is an $o$-minimal expansion of a group, we can instead set $d_0=2$, then we can set $r=2$, and by \cite[Theorem 4.1]{cgs}, for $\Phi(x;y)$ with $\abs{x}=2$, $\Phi$ admits a distal cell decomposition of exponent $2$. (In \cite{cgs}, this is only proven for the case where $\mathcal{M}$ is an expansion of a field, but the proof only uses it for definable choice, which $o$-minimal expansions of groups also have.) Then for $\abs{x}\geq 2$, $\Phi(x;y)$ admits a distal cell decomposition of exponent $2(\abs{x}-2)+2=2\abs{x}-2$.
\end{proof}

In the case of the ordered field $\R$, more is known. In that case, the distal cell decomposition produced in the above proof is the stratification in \cite{chaz91}.
An earlier version of that paper includes an improved bound for the case where
$\abs{x}=3$, showing that $\abs{\mathcal{T}(B)}=\bigO(\abs{B}^3\beta(\abs{B}))=\bigO(\abs{B}^{3+\varepsilon})$ for all $\varepsilon>0$, where $\beta$ is an extremely slowly growing function defined using the inverse of the Ackermann function.\cite{chaz89}
The argument uses Davenport-Schinzel sequences, purely combinatorial objects which lend themselves naturally to counting the complexity of cells defined by inequalities of a bounded family of functions.
The lengths of Davenport-Schinzel sequences can be bounded in terms of the inverse Ackermann function, giving rise to the $\beta(\abs{B})$ term.
For a general reference on such sequences, see \cite{davenport}.
These techniques are extended in \cite{4epsilon} to the case $\abs{x}=4$, where it is shown that $\abs{\mathcal{T}(B)}=\bigO(\abs{B}^{4+\varepsilon})$ for all $\varepsilon>0$.
These results imply that any finite set of formulas $\Phi(\abs {x};\abs {y})$ over $\R$ the ordered field has distal density 3 if $\abs{x}=3$, and $2\abs{x}-4$ if $\abs{x}\geq 4$.
It would be interesting to see if these bounds hold in any $o$-minimal structure, again using Davenport-Schinzel sequences.
It seems possible that every $\Phi(x;y)$ in an $o$-minimal structure has distal density $\abs{x}$, or admits a distal cell decomposition of exponent exactly $\abs{x}$, although new tools would be required to prove such claims.

\subsection{Locally Modular $o$-minimal Groups}
The trichotomy theorem for $o$-minimal structures classifies them locally into three cases: trivial, ordered vector space over an ordered division ring, and expansion of a real closed field \cite{trichotomy}. The $o$-minimal structures that are locally isomorphic to ordered vector spaces are known as the \emph{linear} structures, and can also be classified as those satisfying the \emph{CF property} \cite{linomin}. Any such structure must extend the structure of either an ordered abelian group or an interval in an ordered abelian group. We will show that with the added assumption of local modularity, all finite families of formulas in $o$-minimal expansions of groups admit optimal distal cell decompositions. This includes the special case of any ordered vector space over an ordered division ring.

\begin{thm}\label{ominvec}
	Let $\mathcal{M}$ be an $o$-minimal expansion of an 
	ordered group, with $\mathrm{Th}(\mathcal{M})$ locally modular. Let $\Phi(x;y)$ be a finite set of formulas in the language of $\mathcal{M}$. Then $\Phi$ admits a distal cell decomposition of exponent $\abs{x}$.
\end{thm}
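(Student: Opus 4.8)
The plan is to apply Theorem~\ref{dimensioninduction} with $d_0 = 1$, $r = 1$, and $k = 1$, so that the claimed exponent $k(\abs{x} - d_0) + r = \abs{x}$ falls out. For this I need two ingredients: first, a distal cell decomposition for one-variable families $\Phi(x;y)$ with $\abs{x} = 1$ that uses only \emph{one} parameter from $B$ (rather than the two parameters used in the proof of Theorem~\ref{ominthm}); and second, that a distal cell decomposition with one parameter automatically has exponent $1$, which is immediate since there are only $\bigO(\abs{B})$ potential cells of the form $\psi(M;b)$ with $b \in B$. So the entire content is the $\abs{x}=1$ case: I must produce, for each finite $\Phi(x;y)$ with $\abs{x}=1$, a distal cell decomposition whose cells are each defined by a formula $\psi(x; y_1)$ in a single parameter variable.

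The key point is local modularity. In the proof of Theorem~\ref{ominthm} the two-parameter cells arose because a cell was an intersection $X_1 \setminus X_2$ of two downward-closed sets, i.e.\ a convex set cut out by a lower endpoint coming from one parameter and an upper endpoint coming from another. To collapse this to one parameter I want to exploit that in a locally modular $o$-minimal expansion of a group, every definable family of subsets of $M$ is, after the usual finite decomposition into convex pieces, controlled by a family of \emph{endpoint functions} that are affine (piecewise, with finitely many pieces and slopes from the division ring): each $\varphi(x;b)$ with $\abs{x}=1$ has endpoints of the form $f(b)$ for finitely many fixed definable functions $f$. The CF property / linearity (\cite{linomin}, \cite{trichotomy}) is exactly what guarantees such a description of one-variable definable sets. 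Given this, the set of all relevant endpoints over a parameter set $B$ is $\{f_j(b) : b \in B, j\}$, a set of size $\bigO(\abs{B})$, and the cells of the decomposition should be the convex cells cut out by consecutive endpoints --- but I must arrange that each such cell is named by a \emph{single} $b$. The trick is that between two consecutive endpoints $f_i(b)$ and $f_{i'}(b')$ there is, generically, no constraint forcing both to be remembered: I can define cells of the form ``the maximal convex set containing $x$ on which no $\varphi(\cdot;B)$ changes truth value, anchored at the parameter $b$ realizing the endpoint immediately below (or above) $x$'' --- that is, I use formulas $\psi(x;y_1)$ saying $x$ lies in the convex component of $\varphi^i(M;y_1)$ adjacent to $x$, and push the compatibility condition with the other parameters entirely into the $\theta_\psi$ part of the data (which is allowed to mention all of $B$). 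Concretely, $\mathcal{I}(\Delta)$ for a candidate cell $\Delta$ is, as in Theorem~\ref{ominthm}, the set of $b$ such that some $\varphi(\cdot;b)$ crosses $\Delta$; what changes is only that the \emph{potential cells} $\Psi(B)$ are now indexed by single parameters.

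I expect the main obstacle to be exactly this last maneuver: writing down one-parameter formulas $\psi(x;y_1)$ whose instances over $B$ still cover $M$ and whose selected cells partition $M$ into $\bigO(\abs{B})$ pieces, none crossed by $\Phi(x;B)$. The subtlety is that a convex cell genuinely has two endpoints, so a single parameter can only ``see'' one of them; one must check that choosing, say, the lower endpoint's parameter as the cell's name still yields finitely many cells per parameter and a legitimate cover (the top cell, and cells where an endpoint is $\pm\infty$, need separate one-parameter formulas, but there are only finitely many formula shapes, which is fine). The affineness of endpoint functions from linearity is what makes this work: it ensures the endpoints $f_j(b)$ for a fixed $b$ and varying $j$ are finite in number and definable uniformly, so ``the convex component adjacent to $x$'' is a definable, single-parameter notion. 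Once the $\abs{x}=1$ construction with one parameter is in hand, invoking Theorem~\ref{dimensioninduction} finishes the proof with no further work. I would also remark that local modularity is used only through this one-dimensional input, paralleling how \cite{vcdensityi} reduces VC density in many variables to the one-variable case.
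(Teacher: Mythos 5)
Your reduction to Theorem \ref{dimensioninduction} with $k=1$ rests entirely on the claim that every finite $\Phi(x;y)$ with $\abs{x}=1$ admits a distal cell decomposition whose cells are each defined from a \emph{single} parameter of $B$. This claim is false, already in the simplest locally modular case: take $\mathcal{M}=(\R,0,+,-,<)$, an ordered $\Q$-vector space, and the single formula $\varphi(x;y):=x>y$. Suppose $\mathcal{T}$ were such a decomposition with cell formulas $\Psi(x;y_1)$. By quantifier elimination there is a finite set $S\subset\Q$ such that for every $b$ and every $\psi\in\Psi$, the endpoints of the convex components of $\psi(M;b)$ lie in $\{\lambda b:\lambda\in S\}\cup\{\pm\infty\}$ (there are no nonzero $\emptyset$-definable constants here). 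Take $b_1=1$ and choose $b_2$ irrational with $b_1<b_2<\min\{\mu\in S:\mu>1\}$ (or any irrational $b_2>b_1$ if that set is empty), and set $B=\{b_1,b_2\}$. Any cell of $\mathcal{T}(B)$ meeting $(b_1,b_2)$ is not crossed by $x>b_1$ or $x>b_2$, hence is contained in $(b_1,b_2]$; since the finitely many cells of $\mathcal{T}(B)$ must cover $(b_1,b_2)$, some cell $\psi(M;b)$ with $b\in B$ has a convex component $J\subseteq(b_1,b_2]$ with $\inf J=b_1$. If $b=b_2$, then $b_1=\lambda b_2$ for some $\lambda\in S$, impossible since $b_2\notin\Q$. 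If $b=b_1$, then $\sup J$ lies in $\{\mu b_1:\mu\in S\}\cup\{+\infty\}$ and is greater than $b_1$, hence greater than $b_2$ by the choice of $b_2$, contradicting $J\subseteq(b_1,b_2]$. So no one-parameter decomposition exists, and the induction you invoke cannot be run with $k=1$.

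The underlying misstep is the plan to record only one endpoint in $\psi(x;y_1)$ and ``push the compatibility condition entirely into the $\theta_\psi$ part.'' In Definition \ref{def_dcd}, $\theta_\psi$ only decides \emph{which} potential cells $\psi(M;b)$ are admitted into $\mathcal{T}(B)$; it cannot shrink a cell or make its underlying set depend on parameters not appearing in $\psi$. A convex cell pinched between endpoints coming from two different parameters genuinely needs both, which is exactly why the one-dimensional decomposition in Theorem \ref{ominthm} uses $k=2$ and dimension induction only yields $2\abs{x}-1$. The paper's proof of Theorem \ref{ominvec} therefore does not use Theorem \ref{dimensioninduction} at all: after passing to the quantifier-elimination language of \cite{linomin} and using local modularity to replace partial endomorphisms by total ones, it reduces $\Phi$ to atomic formulas $f(x)+g(y)+c\,\square\,0$ and applies Lemma \ref{conj_lem}, whose cells are realizable conjunctions $\bigwedge_{\varphi}\varphi(x;b_\varphi)$ --- one parameter \emph{per formula}, not one parameter total --- in bijection with the $\Phi$-types over $B$; the exponent $\abs{x}$ then comes from the bound $\abs{S^\Phi(B)}=\bigO(\abs{B}^{\abs{x}})$ extracted from \cite{vcdensityi}. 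If you want to salvage your outline, the viable route is to control the number of cells by the number of types, as in Lemma \ref{conj_lem}, rather than by the number of parameters.
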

To prove this theorem, we will need the following lemma:
\begin{lem}\label{conj_lem}
		Let $\mathcal{M}$ be an $\mathcal{L}$-structure.

		Let $\Phi(x;y)$ be a set of $\mathcal{L}$-formulas such that the negation of each $\varphi \in \Phi$ is a disjunction of other formulas in $\Phi$. Assume that for any nonempty finite $B \subset M^{\abs{y}}$ and $\varphi \in \Phi$, the conjunction $\bigwedge_{b \in B} \varphi(x;b)$ is equivalent to the formula $\varphi(x;b_0)$ for some $b_0 \in B$, or is not realizable. Then $\Phi$ admits a distal cell decomposition $\mathcal{T}$ such that for all finite $B$, the cells of $\mathcal{T}(B)$ are in bijection with the $\Phi$-types $S^{\Phi}(B)$. In particular, the distal density of $\Phi$ equals the dual VC density of $\Phi$.
	\end{lem}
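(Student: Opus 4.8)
The plan is to build the distal cell decomposition directly out of the types, using the hypotheses to show that each type is cut out by a bounded-complexity formula with few parameters. First I would fix a finite nonempty $B \subseteq M^{\abs{y}}$ and analyze an arbitrary complete $\Phi$-type $p \in S^\Phi(B)$. Such a $p$ is determined by which formulas $\varphi(x;b)$, for $\varphi \in \Phi$ and $b \in B$, it contains. Since the negation of each $\varphi \in \Phi$ is a disjunction of formulas in $\Phi$, the negated conditions $\neg\varphi(x;b)$ in $p$ can be rewritten as (disjunctions of) positive conditions, so $p$ is equivalent to the conjunction $\bigwedge \{\varphi(x;b) : \varphi(x;b) \in p\}$ together with the stipulation that certain other positive instances are \emph{absent}; but absence of a positive instance in a complete type means its negation is present, which again is a disjunction of positive instances. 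The key point is then to group the conjuncts by the formula $\varphi$: for each fixed $\varphi \in \Phi$, the set of $b \in B$ with $\varphi(x;b) \in p$ gives a conjunction $\bigwedge_{b} \varphi(x;b)$ which, by hypothesis, collapses to a single $\varphi(x;b_\varphi)$ for some $b_\varphi \in B$ (or is unrealizable, in which case $p$ is inconsistent and we discard it).

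Next I would assemble these. Writing $\Phi = \{\varphi_1,\dots,\varphi_\ell\}$, each consistent type $p$ is equivalent to a formula of the form $\bigwedge_{j=1}^{\ell} \varphi_j(x;b_j)$ for suitable $b_1,\dots,b_\ell \in B$ (some conjuncts possibly trivial, which we can absorb by repeating a parameter). This is a single formula $\psi(x;y_1,\dots,y_\ell) := \bigwedge_j \varphi_j(x;y_j)$ with $k = \ell$ parameters, independent of $B$. So I take $\Psi$ to be the singleton $\{\psi\}$ (or a small finite set handling the degenerate cases where fewer conjuncts appear). The potential cells $\Psi(B)$ are then the sets $\psi(M^{\abs{x}};b_1,\dots,b_\ell)$, and the realized $\Phi$-types over $B$ correspond exactly to the nonempty such sets. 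To make this an \emph{abstract} cell decomposition with cells indexed correctly, I define $\theta_\psi(y;y_1,\dots,y_\ell)$ to hold at $b$ precisely when adding the instance $b$ would change the type — i.e., when $\psi(x;y_1,\dots,y_\ell)$ is crossed by some $\varphi \in \Phi$ at parameter $y$, which is expressible by $\bigvee_{\varphi \in \Phi} \bigl(\exists x_1 x_2\, \psi(x_1;\bar y)\wedge \psi(x_2;\bar y)\wedge \varphi(x_1;y)\wedge\neg\varphi(x_2;y)\bigr)$. Then $\mathcal{T}(B) = \{\Delta \in \Psi(B) : B \cap \mathcal{I}(\Delta) = \emptyset\}$ consists of exactly the nonempty potential cells not crossed by $\Phi(x;B)$; I must check these cover $M^{\abs{x}}$ (each $a$ lies in the potential cell determined by its own $\Phi$-type, which is uncrossed) and are in bijection with $S^\Phi(B)$ (injectivity: distinct uncrossed cells realize distinct $\Phi$-types since an uncrossed cell determines the type of all its points; surjectivity: each realized type yields such a cell by the reduction above).

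Finally, the size count: $\abs{\mathcal{T}(B)} = \abs{S^\Phi(B)} = \pi^*_\Phi(B)$ by the bijection, so $\pi_{\mathcal{T}}(n) = \pi^*_\Phi(n)$, giving distal density equal to $\mathrm{vc}^*(\Phi)$ and in particular exponent $\mathrm{vc}^*(\Phi)$. The main obstacle I anticipate is bookkeeping in the reduction step — carefully handling the negative conjuncts $\neg\varphi(x;b)$ via the disjunction hypothesis and confirming that a complete type really does reduce to a \emph{single} conjunction $\bigwedge_j \varphi_j(x;b_j)$ rather than something with leftover disjunctive clauses. The subtlety is that $\neg\varphi$ being a disjunction of formulas in $\Phi$ means each $\neg\varphi(x;b) \in p$ forces \emph{at least one} disjunct into $p$, and one must argue that these forced positive instances are already subsumed by (or consistently mergeable with) the conjunction of positive instances, so that no genuine disjunction survives in the defining formula of $p$ — this is exactly where the hypothesis that conjunctions collapse to single instances does the work, and I would want to state and verify it as a clean sub-claim before proceeding.
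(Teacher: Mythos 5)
Your proposal follows essentially the same route as the paper: the cells are conjunctions $\bigwedge_{\varphi\in\Phi'}\varphi(x;y_\varphi)$ indexed by subsets $\Phi'\subseteq\Phi$ (your parenthetical about ``degenerate cases'' is needed, since a missing conjunct cannot in general be absorbed by repeating a parameter), $\theta_\psi$ is the crossing formula, and each realizable complete $\Phi$-type is reduced to such a conjunction by dropping negated instances via the disjunction hypothesis and collapsing the positive instances via the conjunction hypothesis, giving the bijection with $S^\Phi(B)$ and hence the density statement. The ``sub-claim'' you flag is resolved exactly as you anticipate: completeness and realizability force some positive disjunct of $\neg\varphi(x;b)$ into the type, which then subsumes the negated conjunct.
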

	\begin{proof}
		Let $\Psi$ be the set of all formulas of the form $\psi(x;(y_\varphi)_{\varphi \in \Phi}) := \bigwedge_{\varphi \in \Phi'}\varphi(x;y_\varphi)$, where $\Phi' \subset \Phi$ is arbitrary.

		To define the distal cell decomposition $\mathcal{T}$, for each $\psi \in \Psi$, let $\theta_\psi(y;(y_\varphi)_{\varphi \in \Phi})$ denote $$\bigvee_{\varphi \in \Phi} \exists x, (\varphi(x_1;y) \wedge \psi(x;(y_\varphi)_{\varphi \in \Phi}) \wedge \neg \exists x_2, (\varphi(x_2;y) \wedge \psi(x;(y_\varphi)_{\varphi \in \Phi}).$$

		Then for a fixed finite $B \subset M^{\abs{y}}$, and fixed $b_\varphi: \varphi \in \Phi$ in $B$, let $\Delta$ be the cell $\psi(M;(b_\varphi)_{\varphi \in \Phi})$. Then for $b \in B$, we see that $b \in I(\Delta)$ if and only if the cell defined by $\psi(x;(b_\varphi)_{\varphi \in \Phi})$ is crossed by $\varphi(x;b)$ for some $\varphi \in \Phi$.

		We now claim that for any finite $B \subset M^{\abs{y}}$, the cells of $\mathcal{T}(B)$ correspond exactly to the $\Phi$-types $S^{\Phi}(B)$. As each cell $\Delta$ of $\mathcal{T}(B)$ is not crossed by $\Phi(B)$, its elements belong to a unique type of $S^{\Phi}(B)$. We claim that this type will be realized exactly by the elements of $\Delta$. This type is equivalent to a single formula, which will be of the form $\bigwedge_{\varphi \in \Phi}\left(\bigwedge_{b \in B} \square_{\varphi,b}\varphi(x;b)\right)$, where each $\square_{\varphi,b}$ is either $\neg$ or nothing. For each $\varphi,b$ such that $\square_{\varphi,b}$ is $\neg$, we may simply drop $\neg\varphi(x;b)$ from the conjunction, because $\neg \varphi(x;b)$ is equivalent to the disjunction $\bigvee_{\varphi \in \Phi_\varphi} \varphi(x;b)$ for some subset $\Phi_\varphi \subseteq \Phi$, and as the type is realizable, $\varphi_{i_0}(x;b)$ rather than its negation must already appear in the conjunction for some $i_0$, and we can replace $\varphi_{i_0}(x;b) \wedge \bigvee_{\varphi \in \Phi_\varphi} \varphi(x;b)$ with simply $\varphi_{i_0}(x;b)$. In this way, inductively, we can continue to remove all of the negated formulas in the conjunction, until we are left with $\bigwedge_{\varphi \in \Phi'}\left(\bigwedge_{b \in B_\varphi} \varphi(x;b)\right)$ where $\Phi'\subseteq \Phi$, and each $B_\varphi \subseteq B$ is nonempty. By our other assumption, as this formula is realizable, it is equivalent to $\bigwedge_{\varphi \in \Phi'}\square_{\varphi, b_\varphi} \varphi(x;b_\varphi)$ where each $b_\varphi \in B$, which in turn is a defining formula for a cell of $\mathcal{T}(B)$, which must be $\Delta$.
\end{proof}

\begin{proof}[Proof of Theorem \ref{ominvec}]
	By $o$-minimality, we can assume the group is abelian.
	Let $\mathcal{L}_{\mathcal{M}}$ be the language of $\mathcal{M}$.
 	Corollary 6.3 of \cite{linomin} shows that $\mathcal{M}$ admits quantifier elimination in the language $\mathcal{L}'$, consisting of $+, <$, the set of algebraic points (that is, $\mathrm{acl}(\emptyset)$) as constants, and a unary function symbol for each $0$-definable partial endomorphism of $\mathcal{M}$. Recall that a partial endomorphism is defined as a function of type either $f : M \to M$ or $f : (-c, c) \to M$ for some $c \in M$, such that if $a,b, a + b$ are all in the domain, then $f(a+b) = f(a)+f(b)$. The unary symbols representing the partial endomorphisms are assigned the value 0 outside the domain. If $f$ has domain $(-c,c)$, then $c \in \mathrm{acl}(\emptyset)$. Note that by $o$-minimality, $\mathrm{acl} = \mathrm{dcl}$, so each of the constants in this language is in $\mathrm{dcl}(\emptyset)$, so each symbol of this language is $\emptyset$-definable in the original structure $(\mathcal{M},\mathcal{L}_{\mathcal{M}})$.

 	Each formula in $\Phi$ is equivalent modulo $\mathrm{Th}(\mathcal{M})$ to some formula in $\mathcal{L}'$, so we replace $\Phi$ with $\Phi_{\mathcal{L}'}$, a pointwise equivalent finite set of $\mathcal{L}$-formulas. It suffices to find a distal cell decomposition of exponent $|x|$ for $\Phi_{\mathcal{L}'}$. As the interpretation of every symbol of $\mathcal{L}'$ is $\emptyset$-definable in $\mathcal{L}_{\mathcal{M}}$, we can replace each formula of this distal cell decomposition with an equivalent $\mathcal{L}_{\mathcal{M}}$-formula without parameters.

 	By quantifier elimination in $\mathcal{L}'$, we can find a finite set of atomic $\mathcal{L}'$-formulas $\Phi_A$ such that each formula in $\Phi_{\mathcal{L}'}$ is equivalent to a boolean combination of formulas in $\Phi_A$ modulo $\mathrm{Th}(\mathcal{M})$. Lemma \ref{bool_comb} tells us that a distal cell decomposition for $\Phi_A$ is a distal cell decomposition for $\Phi_{\mathcal{L}'}$, so it suffices to prove the desired result for $\Phi_A$. We then will find another finite set of $\mathcal{L}'$-formulas, $\Phi'$, such that each atomic formula in $\Phi_A$ is a boolean combination of formulas in $\Phi'$, and $\Phi'$ satisfies the conditions of the following lemma, providing us with a distal cell decomposition that we can show has the desired exponent. It suffices to find $\Phi'$ satisfying the requirements of Lemma \ref{conj_lem} such that any atomic formula in $\Phi_A$ is a boolean combination of formulas from $\Phi'$, and to show that for any finite $\Phi$ and $B$, $\abs{S^{\Phi}(B)} \leq \bigO(\abs{B}^{\abs{x}})$.

	We will select $\Phi'$ to contain only atomic $\mathcal{L}'$-formulas of the form $f(x) + g(y) + c \square 0$, where $f,g$ are group endomorphisms, $c$ is a term built only out of functions and constants, and $\square \in \{<,=,>\}$. If $\varphi(x;y)$ is of the form $f(x) + g(y) + c = 0$, then for a given $B$, $\bigwedge_{b \in B}\varphi(x;b)$ is either equivalent to $\varphi(x;b)$ for all $b \in B$ or not realizable. If $\varphi$ is an inequality, then $\bigwedge_{b \in B} \varphi(x;b)$ is equivalent to $\varphi(x;b_0)$ for some $b_0$ minimizing or maximizing $g(b)$. Also, for all $\varphi \in \Phi'$, $\neg \varphi(x;y)$ is a disjunction of other formulas in $\Phi'$, because $\neg f(x) + g(y) + c = 0$ is equivalent to $f(x) + g(y) + c < 0 \vee f(x) + g(y) + c > 0$, $\neg f(x) + g(y) + c < 0$ is equivalent to $f(x) + g(y) + c = 0 \vee f(x) + g(y) + c > 0$, and $\neg f(x) + g(y) + c > 0$ is equivalent to $f(x) + g(y) + c = 0 \vee f(x) + g(y) + c < 0$.

	Now we show that every atomic $\mathcal{L}'$-formula, and thus every formula in $\Phi_A$, can be expressed as a boolean combination of atomic formulas of the form $f(x) + g(y) + c \square 0$ with $f$ and $g$ total (multivariate) definable endomorphisms. Any atomic formula is of the form $f(x;y) \square g(x;y)$, and by subtraction is equivalent to $(f-g)(x;y) \square 0$. Thus it suffices to show that for any $\mathcal{L}'$-term $t(x;y)$ and $\square \in \{<,=,>\}$, the atomic formula $t(x;y) \square 0$ is equivalent to a boolean combination of formulas of the form $f(x) + g(y) + c \square' 0$ with $f$ and $g$ total endomorphisms and $\square' \in \{<,=,>\}$.

	We prove this by induction on the number of partial endomorphism symbols in $t(x;y)$ that do not represent total endomorphisms. If that number is 0, then every symbol in the term $t(x;y)$ is a variable, a constant, or represents a total endomorphism. Thus $t(x;y)$ is a composition of affine functions, and is thus itself an affine function, which can be represented as $f(x) + g(y) + c$. Thus $t(x;y) \square 0$ is equivalent to $f(x) + g(y) + c \square 0$. Now let $t(x;y)$ contain $n+1$ partial endomorphism symbols. Let one of them be $f$, so that $t(x;y) = t_1(f(t_2(x;y)),x,y)$ for some terms $t_1, t_2$. By \cite[Lemma 4.3]{linomin} and local modularity, $\mathcal{L}'$ contains a partial endomorphism symbol $g$ representing a total function such that $f(x) = g(x)$ on the interval $(-c,c)$, with $f(x) = 0$ outside of that interval. Thus $t(x;y) \square 0$ is equivalent to
	\begin{align*}
	    \left(-c < t_2(x;y) \wedge t_2(x;y)<c \wedge t_1(g(t_2(x;y)),x,y) \square 0\right)\\
	    \vee \left(\neg(-c < t_2(x;y) \wedge t_2(x;y)<c) \wedge t_1(0,x,y) \square 0\right).
	\end{align*}
	This is equivalent to a boolean combination of $t_2(x;y) + c > 0, t_2(x;y) - c < 0, t_1(g(t_2(x;y)),x,y)) \square 0$, and $t_1(0,x,y)\square 0$, each of which has at most $n$ non-total partial endomorphisms, and thus by induction, is a boolean combination of formulas of the desired form.

	Now we wish to verify that $\abs{S^{\Phi}(B)} \leq \bigO(\abs{B}^{\abs{x}})$. Theorem 6.1 of \cite{vcdensityi} says that the dual VC density of $\Phi$ will be at most $\abs{x}$, which is only enough to show that $\Phi$ has distal density $\abs{x}$. However, the proof shows that $\abs{S^{\Phi}(B)} \leq \bigO(\abs{B}^{\abs{x}})$. Tracing the logic of that paper, Theorem 6.1 guarantees that a weakly $o$-minimal theory has the VC1 property, which by Corollary 5.9 implies that $\Phi$ has uniform definition of $\Phi(x;B)$ types over finite sets with $\abs{x}$
	parameters, which implies that $\abs{S^\Phi(B)}\leq \bigO(\abs{B}^{\abs{x}})$ (as noted at the end of Section 5.1).

\end{proof}

\section{Presburger Arithmetic}\label{pres}
Presburger arithmetic is the theory of $\Z$ as an ordered group. As mentioned in Example 2.9 of \cite{cs}, the ordered group $\Z$ admits quantifier elimination in the language 
$\mathcal{L}_{\mathrm{Pres}} = \{0, 1, +, -, <, \{k\mid\}_{k \in \N}\}$, where for each $k \in \N$ and $x \in \Z$, $\Z \models k \mid x$ when $x$ is divisible by $k$, so we will work in this language. As this structure is quasi-$o$-minimal, it is distal, and we will construct an explicit distal cell decomposition with optimal bounds, similar to the distal cell decomposition for $o$-minimal expansions of locally modular ordered groups in Theorem \ref{ominvec}.

\begin{thm}\label{presthm}
	Let $G$ be an ordered abelian group with quantifier elimination in $\mathcal{L}_{\mathrm{Pres}}$. Let $\Phi(x;y)$ be a finite set of formulas in this language. Then $\Phi$ has distal density at most $\abs{x}$.
\end{thm}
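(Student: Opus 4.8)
The plan is to follow the structure of the proof of Theorem~\ref{ominvec}, with quantifier elimination in $\mathcal{L}_{\mathrm{Pres}}$ replacing the quantifier elimination used there, and with the congruence predicates $k\mid\cdot$ as the one genuinely new ingredient. Since $G$ has quantifier elimination in $\mathcal{L}_{\mathrm{Pres}}$, every formula of $\Phi$ is equivalent modulo $\mathrm{Th}(G)$ to a boolean combination of atomic $\mathcal{L}_{\mathrm{Pres}}$-formulas, so by Lemma~\ref{bool_comb} it suffices to produce a distal cell decomposition of exponent $\abs{x}$ for a finite set $\Phi_A$ of atomic $\mathcal{L}_{\mathrm{Pres}}$-formulas. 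Every $\mathcal{L}_{\mathrm{Pres}}$-term is an integer-linear combination of its variables plus an integer constant, so, in contrast with the $o$-minimal case, there are no partial endomorphisms to eliminate: every atomic formula already has one of the forms $f(x)+g(y)+c\ \square\ 0$ with $\square\in\{<,=,>\}$, or $k\mid(f(x)+g(y)+c)$ with $k\in\N_{>0}$, where $f,g$ are (total) definable endomorphisms and $c\in\Z$.

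Next I would let $\Phi'$ be the finite set of all atomic formulas of these two shapes that occur, closed up so that the negation of each member is a disjunction of members: $\neg(t<0)$ is $(t=0)\vee(t>0)$, $\neg(t=0)$ is $(t<0)\vee(t>0)$, $\neg(t>0)$ is $(t<0)\vee(t=0)$, and $\neg(k\mid t)$ is $\bigvee_{i=1}^{k-1}k\mid(t+i)$ --- only finitely many moduli $k$ occur, so adjoining the shifted congruences $k\mid(f(x)+g(y)+c+i)$ keeps $\Phi'$ finite, and every atomic formula of $\Phi_A$ is literally a member of $\Phi'$. Then I would verify the hypotheses of Lemma~\ref{conj_lem} for $\Phi'$: for an inequality atom $f(x)+g(y)+c<0$, the conjunction $\bigwedge_{b\in B}\bigl(f(x)+g(b)+c<0\bigr)$ is equivalent to the single conjunct for which $g(b)$ is extremal; for an equality atom it is one conjunct or unrealizable; and for a congruence atom $k\mid(f(x)+g(y)+c)$, if all differences $g(b)-g(b')$ with $b,b'\in B$ are divisible by $k$ then $\bigwedge_{b\in B}k\mid(f(x)+g(b)+c)$ is equivalent to any one conjunct, and otherwise it is unrealizable. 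Thus Lemma~\ref{conj_lem} applies and yields a distal cell decomposition $\mathcal{T}$ for $\Phi'$ --- hence, by Lemma~\ref{bool_comb}, for $\Phi$ --- whose cells over any finite $B$ are in bijection with $S^{\Phi'}(B)$.

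It then remains to show $\abs{S^{\Phi'}(B)}=\bigO(\abs{B}^{\abs{x}})$. As in the last step of the proof of Theorem~\ref{ominvec}, I would invoke the computation of the dual VC density of Presburger arithmetic, and more generally of ordered abelian groups in this language, from \cite{vcdensityi}, and trace through it to extract the explicit polynomial bound rather than merely the asymptotic inequality (the relevant statements there give uniform definability of $\Phi'(x;B)$-types with $\abs{x}$ parameters, which is exactly $\abs{S^{\Phi'}(B)}=\bigO(\abs{B}^{\abs{x}})$). Alternatively one can argue directly: the finitely many congruence atoms refine each $x$-type by only a bounded factor, namely by fixing the residues of the finitely many functions $f$ modulo the finitely many moduli; once those residues are fixed, the type is determined by the finitely many families of half-spaces $\{x: f(x)\ \square\ -(g(b)+c)\}$, each family linearly ordered by inclusion and hence contributing only $\bigO(\abs{B})$ distinct half-spaces, and $m$ half-spaces partition $G^{\abs{x}}$ into $\bigO(m^{\abs{x}})$ regions. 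Either way $\abs{\mathcal{T}(B)}=\abs{S^{\Phi'}(B)}=\bigO(\abs{B}^{\abs{x}})$, so $\Phi$ has distal density at most $\abs{x}$.

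The main obstacle is this last step: obtaining the explicit $\bigO(\abs{B}^{\abs{x}})$ count --- not merely the VC-density bound $\mathrm{vc}^*(\Phi')\le\abs{x}$ --- and, if one takes the self-contained route, justifying the hyperplane-arrangement estimate $\bigO(m^{\abs{x}})$ over an arbitrary ordered abelian group rather than over $\R$ or $\Q$; the safest course is to rely on \cite{vcdensityi} here, exactly as was done in the weakly $o$-minimal case.
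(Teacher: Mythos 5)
Your proposal is correct and follows essentially the same route as the paper: reduce by quantifier elimination and Lemma \ref{bool_comb} to atomic (in)equalities and congruence conditions, verify the negation-closure and conjunction hypotheses of Lemma \ref{conj_lem} exactly as you do (the paper merely normalizes all congruences to a single modulus $K=\mathrm{lcm}$ with constants in $\{0,\dots,K-1\}$ rather than keeping each $k$ and adjoining shifted constants), and then bound the number of types by citing \cite{vcdensityi}, which the paper does via quasi-$o$-minimality and \cite[Theorem 6.4]{vcdensityi}. Your worry about the last step is harmless: since the statement only asserts distal density at most $\abs{x}$ (an infimum), the dual VC-density bound already suffices once cells are in bijection with types, and in any case the paper quotes the same $\bigO(\abs{B}^{\abs{x}})$ bound from \cite{vcdensityi} rather than reproving a hyperplane-arrangement estimate over a general ordered abelian group.
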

\begin{proof}
	Throughout this proof, we will identify $\Z$ with the subgroup of $G$ generated by the constant 1.

	As $G$ has quantifier elimination in this language, every $\varphi(x;y) \in \Phi$ is equivalent to a boolean combination of atomic formulas. 
	We will group the atomic formulas into two categories. The first is those of the form $f(x) \square g(y) + c$, where $\square \in \{<,=,>\}$, $(f,g)$ belongs to a finite set $F$ of pairs of $\Z$-linear functions of the form $\sum_{i = 1}^{\abs{x}} a_i x_i$ with $a_i \in \Z$, and $c$ belongs to a finite set $C \subseteq \Z$. The second is atomic formulas of the form $k \mid (f(x) + g(y) + c)$ for $k \in \N$, $(f,g) \in F$, and $c \in C$. Furthermore, we may assume that only one symbol of the form $k \mid$ is used. If $K$ is the least common multiple of the finite collection of $k$ such that $k \mid$ appears in one of these atomic formulas, then each $k \mid (f(x) + g(y)+ c)$ can be replaced with $K \mid (d \cdot f(x) + d \cdot g(y) + d \cdot c)$, where $d \cdot \sum_{i = 1}^{\abs{x}}a_i x_i = \sum_{i = 1}^{\abs{x}}(d \cdot a_i) x_i$ and $dk = K$. Note that all of these functions and constants are $\emptyset$-definable.

	Then, by Lemma \ref{bool_comb}, we may replace $\Phi(x;y)$ with the union of the following two sets of atomic formulas for appropriate choices of $F$ and $C$:
	\begin{itemize}
		\item Fix $C$ to be a finite subset of $\Z$, $F$ a finite subset of pairs of $\Z$-linear functions of the form $\sum_{i = 1}^{\abs{x}} a_i x_i$, and $K \in \N$.
		\item Let $\Phi_0$ be the set of all $f(x)\square g(y)+ c$  with $(f,g)\in F,c \in C, \square \in \{<,=,>\}$.
		\item Let $\Phi_1$ be the set of all $K \mid (f(x)+g(y)+c)$ with $(f,g) \in F, c \in \{0,\dots,K-1\}$.
		\item Let $\Phi = \Phi_0 \cup \Phi_1$.
	\end{itemize}

	It is straightforward to see that the negation of any formula from $\Phi_0$ is equivalent to the disjunction of two formulas from $\Phi_0$, and a negation of any formula $K | (f(x)+g(y)+c)$ from $\Phi_1$ is equivalent to $\bigvee_{0 \leq c' <K, c' \neq c} K|(f(x)+g(y)+c)$, a disjunction of formulas from $\Phi_1$.

	To apply Lemma \ref{conj_lem}, it suffices to show that for any $\varphi \in \Phi$ and nonempty finite $B \subset M^{\abs{y}}$, $\bigwedge_{b \in B} \varphi(x;b)$ is equivalent to $\varphi(x;b_0)$ for some $b_0 \in B$ or is not realizable. This holds for $\varphi \in \Phi_0$ for reasons discussed in the proof of \ref{ominvec}. For $\varphi \in \Phi_1$, we see that if there exist $b_1, b_2$ such that $g(b_1) \not\equiv g(b_2) \pmod{K}$, then $\varphi(x;b_1) \wedge \varphi(x;b_2)$ implies $K|(f(x)+g(b_1)+c) \wedge K|(f(x)+g(b_2)+c)$ so $K|\left(g(b_1)-g(b_2)\right)$, a contradiction. Thus this conjunction is not realizable. Otherwise, for any $b_0 \in B$, and any other $b \in B$, $g(b) \equiv g(b_0)\pmod{K}$, so $\bigwedge_{b \in B}\varphi(x;b)$ is equivalent to $\varphi(x;b_0)$.

	Now Lemma \ref{conj_lem} gives us a distal cell decomposition $\mathcal{T}$ for $\Phi$, such that for all $B$, $\abs{T(B)} = \abs{S^{\Phi}(B)}$. The theory of $\Z$ in $\mathcal{L}_{\mathrm{Pres}}$ is quasi-$o$-minimal by \cite[Example 2]{quasio}, and the same argument will hold for $G$, because $G$ has quantifier elimination in the same language. The same VC density results apply to quasi-$o$-minimal theories as to $o$-minimal theories (see \cite[Theorem 6.4]{vcdensityi}), so $\abs{S^{\Phi}(B)}\leq \bigO(\abs{B}^{\abs{x}})$.
\end{proof}

\section{$\Q_p$, the linear reduct}
\label{qpaff}
Now we turn our attention to the linear reduct of $\Q_p$, viewed as a structure $\mathcal{M}$ in the language $\mathcal{L}_\mathrm{aff}=\{0,+,-,\{c \cdot \}_{c \in \Q_p},\mid,\{Q_{m,n}\}_{m,n\in\N \setminus \{0\}}\}$, where $c \cdot$ is a unary function symbol which acts as scalar multiplication by $c$, $x\mid y$ stands for $v(x)\leq v(y)$, and $\mathcal{M}\models Q_{m,n}(a)$ if and only if $a \in \bigcup_{k \in \Z}p^{km}(1+p^n\Z_p)$. For each $m,n$, the set $Q_{m,n}(M) \setminus \{0\}$ is a subgroup of the multiplicative group of $\Q_p$ with finite index. Leenknegt \cite{lee12,lee} introduced this structure (referring to the language as $\mathcal{L}_\mathrm{aff}^{\Q_p}$), proved that it is a reduct of Macintyre's standard structure on $\Q_p$, and proved cell decomposition results for it which imply quantifier elimination.

Bobkov \cite{bobkov} shows that every finite set $\Phi(x;y)$ of formulas has dual VC density $\leq\abs{x}$, and this section is devoted to strengthening this by proving the same optimal bound for the distal density:
\begin{thm}\label{qpaffthm}
 	For any finite set $\Phi(x;y)$ of $\mathcal{L}_\mathrm{aff}$-formulas in $\Q_p$, there is a distal cell decomposition $\mathcal{T}$ with $\abs{\mathcal{T}(B)}=\bigO(\abs{B}^{\abs{x}})$, so $\Phi$ has distal density $\leq \abs{x}$.
\end{thm}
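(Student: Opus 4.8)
The plan is to mimic the strategy used for Presburger arithmetic (Theorem~\ref{presthm}) and for locally modular $o$-minimal groups (Theorem~\ref{ominvec}): reduce $\Phi$ via quantifier elimination to a set of atomic formulas, then further reduce to a set $\Phi'$ satisfying the hypotheses of Lemma~\ref{conj_lem}, which yields a distal cell decomposition whose cells biject with $\Phi'$-types, so that the bound $\abs{S^{\Phi'}(B)} = \bigO(\abs{B}^{\abs{x}})$ from Bobkov's work \cite{bobkov} closes the argument. By Leenknegt's cell decomposition \cite{lee}, $\mathcal{M}$ has quantifier elimination in $\mathcal{L}_\mathrm{aff}$, so every $\varphi \in \Phi$ is a boolean combination of atomic formulas. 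These atomic formulas come in three shapes after clearing denominators and absorbing scalar multiplications: linear divisibility conditions $f(x) \mid g(y)$ (and $g(y) \mid f(x)$), the $Q_{m,n}$-valued conditions $Q_{m,n}(f(x) + g(y))$, and equalities $f(x) + g(y) = 0$, where $f,g$ range over a finite set of $\Q_p$-linear forms. As in the earlier proofs, by Lemma~\ref{bool_comb} it suffices to handle a finite set $\Phi'$ of such atomic formulas (closed, as required, under the operation ``negation is a disjunction of other members''), provided each satisfies the conjunction condition of Lemma~\ref{conj_lem}.

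First I would set up the reduction carefully: fix a finite set $F$ of pairs $(f,g)$ of linear forms and finitely many moduli $m,n$ large enough (taking least common multiples where needed, exactly as the Presburger proof handles $K$) so that every atomic $\mathcal{L}_\mathrm{aff}$-subformula appearing in $\Phi$ is a boolean combination of formulas from
\begin{itemize}
	\item the divisibility atoms $f(x) \mid g(y)$ and $g(y) \mid f(x)$, i.e. $v(f(x)) \le v(g(y))$ and $v(g(y)) \le v(f(x))$, for $(f,g) \in F$;
	\item the coset atoms $Q_{m,n}(f(x) + g(y) + c)$ for $(f,g) \in F$ and $c$ in an appropriate finite set;
	\item the equalities $f(x) + g(y) + c = 0$.
\end{itemize}
Note $\neg(f(x)\mid g(y))$ is $g(y)\mid f(x) \wedge \neg(f(x) = g(y) \cdot u$ \ldots$)$ — here I must be slightly more careful than in the ordered case, since the value group $\Z$ is discretely ordered and $v(a)<v(b)$ is not literally the negation of $v(b)\le v(a)$; I would instead work with the pair of atoms $v(f(x)) \le v(g(y))$, $v(g(y)) \le v(f(x))$ together with the equality atom $f(x) = g(y)$, so that the boolean algebra they generate is closed under negation. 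For the $Q_{m,n}$ atoms, negation of $Q_{m,n}(z)$ is the disjunction over the finitely many nontrivial cosets of $Q_{m,n}(M)\setminus\{0\}$ in $\Q_p^\times$ of the corresponding coset conditions, which are again of the same form after translation, so closure under negation holds here too, just as $K\mid$ was handled for Presburger.

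Next I would verify the conjunction hypothesis of Lemma~\ref{conj_lem}: for each $\varphi \in \Phi'$ and nonempty finite $B$, $\bigwedge_{b\in B}\varphi(x;b)$ is equivalent to a single $\varphi(x;b_0)$ or is unrealizable. For an equality $f(x) + g(y) + c = 0$ this is immediate (all the $g(b)+c$ must agree, else unrealizable). For a valuation-comparison atom $v(f(x)) \le v(g(y))$: the conjunction over $b \in B$ is $v(f(x)) \le \min_{b\in B} v(g(b))$, achieved by some $b_0$; and $v(g(y)) \le v(f(x))$ conjuncts to $\max_{b} v(g(b)) \le v(f(x))$, again a single instance — so this is exactly the ``minimize/maximize $g(b)$'' phenomenon from the proof of Theorem~\ref{ominvec}, now with $v\circ g$ in place of $g$. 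For a coset atom $Q_{m,n}(f(x) + g(y) + c)$: if two parameters $b_1, b_2$ force $f(x)$ into incompatible cosets the conjunction is unrealizable, and otherwise all $b \in B$ impose the same condition, so the conjunction equals $\varphi(x;b_0)$ for any $b_0$ — the same dichotomy as for the $K\mid$ atoms in Presburger. With both hypotheses checked, Lemma~\ref{conj_lem} produces a distal cell decomposition $\mathcal{T}$ for $\Phi'$ with $\abs{\mathcal{T}(B)} = \abs{S^{\Phi'}(B)}$ for every finite $B$, and since every symbol of $\mathcal{L}_\mathrm{aff}$ is $\emptyset$-definable in $\mathcal{M}$ (it is the home structure), the defining formulas of $\mathcal{T}$ are already $\mathcal{L}_\mathrm{aff}$-formulas; by Lemma~\ref{bool_comb} this $\mathcal{T}$ is a distal cell decomposition for the original $\Phi$ as well.

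Finally, to get the exponent I would invoke Bobkov's theorem \cite{bobkov}: $\mathrm{vc}^*(\Phi') \le \abs{x}$. As in the discussion at the end of the proof of Theorem~\ref{ominvec}, the infimum-of-exponents statement alone is not quite enough — I need the honest polynomial bound $\abs{S^{\Phi'}(B)} = \bigO(\abs{B}^{\abs{x}})$. I expect this to be the main obstacle: I would need to trace Bobkov's argument (as the $o$-minimal case traced \cite{vcdensityi}) to confirm it yields a uniform definition of $\Phi'(x;B)$-types over finite sets using $\abs{x}$ parameters, hence the stated $\bigO(\abs{B}^{\abs{x}})$ rather than merely $\bigO(\abs{B}^{\abs{x}+\varepsilon})$; alternatively one may be able to bound $\abs{S^{\Phi'}(B)}$ directly from the explicit cell-decomposition description of $\Phi'$-types coming from Leenknegt's results. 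Granting this, $\abs{\mathcal{T}(B)} = \abs{S^{\Phi'}(B)} = \bigO(\abs{B}^{\abs{x}})$, so $\Phi$ has distal density $\le \abs{x}$, and since the lower bound $\abs{x}$ always holds, this is optimal.
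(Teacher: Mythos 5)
There is a genuine gap, and it sits exactly at the step where you invoke Lemma \ref{conj_lem}. Your reduction assumes the atomic $\mathcal{L}_\mathrm{aff}$-formulas can be taken in the ``separated'' shapes $f(x)\mid g(y)$, $g(y)\mid f(x)$, $Q_{m,n}(f(x)+g(y))$, $f(x)+g(y)=0$. But $\mid$ is a binary relation on arbitrary terms, and terms are affine in $(x,y)$ jointly, so the unavoidable atoms are of the form $\bigl(f_1(x)+g_1(y)\bigr)\mid\bigl(f_2(x)+g_2(y)\bigr)$; after quantifier elimination (Lemma \ref{empty_qe}) they become $v(p_i(x)-c_i(y))<v(p_j(x)-c_j(y))$, with $x$ and $y$ mixed on both sides. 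Boolean combinations of your separated divisibility atoms cannot express, say, $(x-y_1)\mid y_2$, i.e.\ membership of $x$ in a ball whose center and radius both depend on the parameter, so the reduction to your $\Phi'$ is already not available. Worse, for the atoms that actually occur the conjunction hypothesis of Lemma \ref{conj_lem} fails: take $\varphi(x;y_1,y_2):=(x-y_1)\mid y_2$, whose instances are complements of open balls $B_{v(r)}(c)$. For two parameters $b_1=(c_1,r_1)$, $b_2=(c_2,r_2)$ with $v(c_1-c_2)<\min(v(r_1),v(r_2))$ the two balls are disjoint, so $\varphi(x;b_1)\wedge\varphi(x;b_2)$ is realizable but equivalent to neither single instance. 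This is the essential difference from the ordered settings of Theorems \ref{ominvec} and \ref{presthm}: there the relevant instance families are linearly ordered by inclusion (half-lines, congruence classes), whereas here the balls form a tree, complements of balls are not directed, and no choice of $\Phi'$ closed under ``negation is a disjunction of members'' can dodge them. A smaller but real additional issue is parameter-freeness: your auxiliary constants ``$+c$'' are not in $\mathcal{L}_\mathrm{aff}$ and, since $\mathrm{dcl}(\emptyset)=\{0\}$ in this reduct, they are not $\emptyset$-definable, so they cannot appear in the formulas defining $\mathcal{T}$; this is precisely why the paper reproves Leenknegt's cell decomposition and Bobkov's quantifier elimination in $\emptyset$-definable form.

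Because of this, the actual proof cannot produce cells in bijection with $\Phi$-types via directedness; instead it adapts Bobkov's machinery directly. Cells are cut out by specifying, for each linear form $p_i$, the subinterval containing $p_i(x)$ (an atom of the Boolean algebra generated by $\bigO(\abs{B})$ balls, definable from boundedly many parameters since at most $p-1$ sub-balls need removing), its subinterval type, and the order type of the $T$-valuations; non-crossing is Bobkov's Lemma 4.2.12. The bound $\bigO(\abs{B}^{\abs{x}})$ then comes not from counting types but from a separate counting argument: partition the segment set by order type, extract a linearly independent subfamily $J$ of the $\vec p_i$ with $\abs{J}\leq\abs{x}$, and show that the subintervals for $j\in J$ together with boundedly many $p$-adic digits for $i\notin J$ determine the cell. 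If you want to salvage your outline, that counting argument (or a genuinely new directedness-style lemma adapted to trees rather than chains) is the missing ingredient; Lemma \ref{conj_lem} as stated will not apply.
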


It is worth noting that Bobkov used a slightly different version of this language, which included the constant 1, therefore making all definable sets $\emptyset$-definable. Because our distal cell decomposition must be definable without parameters, we will use slightly stronger versions of Leenknegt and Bobkov's basic lemmas, to avoid parameters. The first such result is a cell-decomposition result, proven in \cite{lee12}, but stated most conveniently as \cite[Theorem 4.1.5]{bobkov}. To state it, we need to define what a cell is in that context:

\begin{defn}
	A \emph{$0$-cell} is the singleton $\Q^0_p$. A \emph{$(k + 1)$-cell} is a subset of $\Q^{k+1}_p$ of the following form:
$$\{(x, t) \in D \times \Q_p \, | \, v(a_1 (x)) \, \square_1 \, v(t - c(x)) \, \square_2 \, v(a_2(x)), \, t - c(x) \in \lambda Q_{m,n}\},$$ where $D$ is a $k$-cell, $a_1, a_2, c$ are polynomials of degree $\leq 1$, called the \emph{defining polynomials}, each of $\square_1, \square_2$ is either $<$ or no condition, $m,n \in \N$, and $\lambda \in \Q_p$.
\end{defn}

\begin{fact}[{\cite{lee12}, see also \cite[Theorem 4.1.5]{bobkov}}]
	Any definable subset of $\Q^k_p$ (in the language $\mathcal{L}_{\mathrm{aff}}$) decomposes into a finite disjoint union of $k$-cells.
\end{fact}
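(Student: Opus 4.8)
\emph{Proof plan.} I would prove this by induction on $k$, following the template of $p$-adic cell decomposition arguments (Denef, Cohen), but exploiting that in $\mathcal{L}_{\mathrm{aff}}$ every term is a $\Q_p$-linear form, so no genuine Weierstrass preparation is needed. The base case $k=0$ is trivial. For the inductive step, fix a definable $X\subseteq\Q_p^{k+1}$ with coordinates $(x,t)$, $x=(x_1,\dots,x_k)$, and invoke quantifier elimination for $\mathcal{L}_{\mathrm{aff}}$ (which one may take from Leenknegt, or prove simultaneously with cell decomposition, since a cell is manifestly definable and the projection of a cell along $t$ has a cell decomposition by the inductive hypothesis). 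Thus $X$ is a finite Boolean combination of atomic conditions, each of the form $\tau=0$, $\tau_1\mid\tau_2$, or $Q_{m,n}(\tau)$ with each $\tau$ a $\Q_p$-linear form in $x_1,\dots,x_k,t$. Write each such $\tau$ as $\alpha t+\beta(x)$ with $\alpha\in\Q_p$ a fixed scalar and $\beta$ $\Q_p$-linear in $x$; if $\alpha=0$ the condition is on $x$ only, and if $\alpha\neq 0$ factor $\tau=\alpha(t-c(x))$ with $c(x):=(-\alpha^{-1})\cdot\beta(x)$, which lies in $\mathcal{L}_{\mathrm{aff}}$ since $c\cdot$ is a basic symbol for every $c\in\Q_p$. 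Let $c_1(x),\dots,c_r(x)$ enumerate the centers so obtained.

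Next I would partition $\Q_p^k$ into finitely many definable pieces on which the configuration of the centers is fixed: the ordering of the extended valuations $v(c_i(x)-c_j(x))\in\Z\cup\{\infty\}$ for all $i,j$, together with the relevant angular-component relations among the pairwise differences $c_i(x)-c_j(x)$. All of these are $\mathcal{L}_{\mathrm{aff}}$-definable: $v(c_i-c_j)\le v(c_{i'}-c_{j'})$ is $(c_i-c_j)\mid(c_{i'}-c_{j'})$, and an angular-component agreement "$\mathrm{ac}$'s match modulo $p^n$" is the condition $(p^n\cdot(c_i-c_j))\mid(c_i-c_j-c_{i'}+c_{j'})$, again a $\mid$-relation between linear forms after scaling by the constant $p^n$. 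By the inductive hypothesis each such piece decomposes into $k$-cells; fix one, $D$. Over $D$ I would reduce the description to conditions involving a single center by decomposing the $t$-line over each $x\in D$ according to which $c_i(x)$ is closest to $t$ (maximizes $v(t-c_i(x))$), after first peeling off the finitely many graphs $t=c_i(x)$ as $(k+1)$-cells with $\lambda=0$. On the region where $c_{i_0}$ is strictly the unique closest center, the ultrametric inequality gives $v(t-c_j(x))=v(c_{i_0}(x)-c_j(x))$ and $\mathrm{ac}(t-c_j(x))=\mathrm{ac}(c_{i_0}(x)-c_j(x))$ for all $j\neq i_0$, so every atomic condition mentioning $c_j$ becomes a condition on $x$ alone (absorbed into $D$ after a further application of the inductive hypothesis), whereas conditions mentioning $c_{i_0}$ become a comparison of $v(t-c_{i_0}(x))$ with $v(a(x))$ for linear $a$, or a membership $t-c_{i_0}(x)\in\lambda Q_{m,n}$, together with the defining inequality $v(t-c_{i_0}(x))>\max_{j}v(c_{i_0}(x)-c_j(x))$ of the region. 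The tie regions, where $t$ is equidistant from two or more centers, are finitely many and of the same shape once one works over the refinement of $D$ fixing the orderings and angular-component relations above, and I would handle them by the same closest-center bookkeeping applied to the reduced configuration.

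On each resulting region, $X$ restricts to a Boolean combination of finitely many comparisons $v(a(x))\ \square\ v(t-c(x))$ with a single center $c$ and finitely many cosets $t-c(x)\in\lambda Q_{m,n}$. Refine $D$ once more, via the inductive hypothesis, so that the finitely many $v(a(x))$ are linearly ordered; then the comparisons cut the $t$-line into an ordered sequence of valuation bands, and the intersection of the finitely many coset conditions is a single condition $t-c(x)\in\lambda' Q_{M,N}$, where $M,N$ come from least common multiples, using that $Q_{m,n}(\Q_p)\setminus\{0\}$ is a finite-index subgroup of $\Q_p^\times$ and that scaling a union of its cosets by a constant is a union of cosets of a possibly larger such subgroup. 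The common refinement of the ordered bands with this single coset condition exhibits $X$ over that region as a finite disjoint union of $(k+1)$-cells; summing over the finitely many regions and finitely many $D$ completes the induction.

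The main obstacle is honestly organizing the multi-center analysis so that the description over each base cell genuinely collapses to a single center and finitely many definable pieces: the ultrametric identities are immediate when one center is strictly closest, but the tie regions force the preliminary refinement of the $x$-space by relative valuations and angular components of the pairwise differences $c_i(x)-c_j(x)$, and one must check that refinement is $\mathcal{L}_{\mathrm{aff}}$-definable (as sketched above, it is). The only other delicate point is closure of the class of $Q_{m,n}$-coset conditions under the substitution $z\mapsto\alpha(t-c(x))$ and under finite intersection, which follows from the finite-index subgroup property of $Q_{m,n}(\Q_p)\setminus\{0\}$ in $\Q_p^\times$. Everything else is routine, if lengthy, refinement bookkeeping.
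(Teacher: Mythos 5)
The paper does not actually prove this statement: it is imported as a Fact from Leenknegt \cite{lee12} (see also \cite[Theorem 4.1.5]{bobkov}), and the only place the paper engages with the underlying argument is the proof of Lemma \ref{empty_cell}, where Leenknegt's proof is summarized as a closure-properties argument --- finite unions of cells (``semi-additive sets'') are closed under intersection and under coordinate projection, and every quantifier-free definable set is semi-additive, whence quantifier elimination and cell decomposition follow together. Your plan is a genuinely different organization of the same ultrametric content: a direct Cohen--Denef-style induction on the number of variables, isolating the last coordinate $t$, factoring every atomic condition around a center $c_i(x)$, refining the base by the relative valuations and angular components of the pairwise differences $c_i-c_j$, and collapsing to a single center via the closest-center analysis. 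Your route is more self-contained and makes explicit where the ultrametric identities do the work, at the cost of the tie-region bookkeeping you acknowledge; Leenknegt's packaging hides that bookkeeping inside the closure lemmas and delivers QE and cell decomposition in one pass. Either is an acceptable proof of the Fact, and your outline is sound in its overall architecture.

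One step is overstated as written. On the region where $c_{i_0}(x)$ is strictly closest to $t$, the ultrametric inequality does give $v(t-c_j(x))=v(c_{i_0}(x)-c_j(x))$ for $j\neq i_0$, but the claim that $\mathrm{ac}(t-c_j(x))=\mathrm{ac}(c_{i_0}(x)-c_j(x))$ --- and hence that every $Q_{m,n}$-condition on $t-c_j$ becomes a condition on $x$ alone --- only holds once $v(t-c_{i_0}(x))\geq v(c_{i_0}(x)-c_j(x))+n$, since $(t-c_j)(c_{i_0}-c_j)^{-1}\in 1+p^{\,v(t-c_{i_0})-v(c_{i_0}-c_j)}\Z_p$ and one needs this to land in $1+p^n\Z_p\subseteq Q_{m,n}$. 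The residual band $0<v(t-c_{i_0}(x))-v(c_{i_0}(x)-c_j(x))<n$ must be split into finitely many annuli and, within each, finitely many cosets of a smaller ball around $c_{i_0}(x)$ --- the same surgery your tie regions require, and the analogue of the ``subinterval types of the second kind'' appearing in Sections \ref{qpaff} and \ref{qpmac}. Since the machinery you already invoke handles this, I would call it an imprecision in the sketch rather than a fatal gap, but a complete write-up must address it explicitly.
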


Now we modify these definitions and results to work in an $\emptyset$-definable context:

\begin{defn}
	A \emph{$0$-cell over $\emptyset$} is just a $0$-cell.
	A \emph{$(k+1)$-cell over $\emptyset$} is a $(k+1)$-cell $\{(x, t) \in D \times \Q_p | v(a_1 (x)) \square_1 v(t - c(x)) \square_2 v(a_2(x)), t - c(x) \in \lambda Q_{m,n}\}$
	where $D$ is a $k$-cell over $\emptyset$ and the defining polynomials have constant coefficient 0.
\end{defn}

We can now state a $\emptyset$-definable version of the cell decomposition result:
\begin{lem}\label{empty_cell}
	Any $\emptyset$-definable subset of $\Q^k_p$ (in the language $\mathcal{L}_{\mathrm{aff}}$) decomposes into a finite disjoint union of $k$-cells over $\emptyset$.
\end{lem}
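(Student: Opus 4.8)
The plan is to deduce Lemma~\ref{empty_cell} from the parametrized cell decomposition already stated (the version of \cite{lee12} quoted above) by the standard trick of absorbing the parameters appearing in a cell decomposition of a $\emptyset$-definable set back into the $\mathcal{L}_{\mathrm{aff}}$-structure. Given a $\emptyset$-definable $X\subseteq \Q_p^k$, apply the general cell decomposition fact to get $X=\bigsqcup_i C_i$ where each $C_i$ is a $k$-cell whose defining polynomials $a_1,a_2,c$ have coefficients in $\Q_p$ and whose parameters $\lambda$ are in $\Q_p$; these finitely many constants need not be $\emptyset$-definable a priori. The key observation is that the constant coefficient of any of the defining polynomials, and each $\lambda$, can be eliminated: since $X$ itself is $\emptyset$-definable, I will argue that one may re-choose the cell decomposition so that all defining polynomials are linear (constant coefficient $0$) and $\lambda$ is absorbed into the index $\lambda Q_{m,n}$, which is a coset of a finite-index subgroup, using that the $Q_{m,n}$ are $\emptyset$-definable predicates of the language.

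The main step, then, is a normalization argument. First I would handle the base case $k=0$, which is vacuous. For the inductive step, I would take a $k$-cell $C=\{(x,t)\in D\times\Q_p \mid v(a_1(x))\,\square_1\,v(t-c(x))\,\square_2\,v(a_2(x)),\ t-c(x)\in\lambda Q_{m,n}\}$ appearing in the decomposition, where by induction I may assume $D$ is a $(k-1)$-cell over $\emptyset$ after refining. Write each defining polynomial as $p(x)=p_0+\ell(x)$ with $p_0$ its constant term and $\ell$ linear (homogeneous). The constant term of $c$ can be removed by the change of variable $t\mapsto t-c_0$; but since we cannot change variables globally, instead I would observe that the set of constant terms occurring is finite, and that the conditions $v(a_1(x))\,\square_1\,v(t-c(x))$ etc. only constrain $t-c(x)$, so replacing $c$ by its homogeneous part $\ell_c$ and shifting the valuation/coset conditions to be about $t-\ell_c(x)$ costs only a further finite partition of $D$ according to $v$ of the finitely many relevant constants, each such piece being again definable and hence (by induction, applied to the $\emptyset$-definable refinement) decomposable into cells over $\emptyset$. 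The coset parameter $\lambda$ is dealt with similarly: $\lambda Q_{m,n}$ is one of finitely many cosets of $Q_{m,n}(M)\setminus\{0\}$, and membership in a fixed coset of this finite-index subgroup, relative to $t-c(x)$ with $c$ now homogeneous, is an $\mathcal{L}_{\mathrm{aff}}$-condition without parameters once one notes that the $Q_{m,n}$ are basic predicates and that distinct cosets can be separated by further subdividing; alternatively, one passes to a common $Q_{m',n'}$ refining all the $Q_{m,n}$ and all relevant cosets.

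The hard part will be managing the coset parameter $\lambda$: unlike the polynomial constant terms, $\lambda$ is genuinely a multiplicative parameter, and showing that after a finite $\emptyset$-definable subdivision every resulting piece is a cell \emph{over $\emptyset$} in the precise sense of the definition (linear defining polynomials, coset index of the prescribed form) requires care that the finitely many cosets of $Q_{m,n}$ that arise can each be named without parameters. I expect this to go through because $\Q_p^\times/(Q_{m,n}(M)\setminus\{0\})$ is finite and $\emptyset$-definable, so each coset is a $\emptyset$-definable set and one can split $D$ accordingly; but the bookkeeping — ensuring disjointness of the refined cells and that the nesting $D$ a $(k-1)$-cell over $\emptyset$ is preserved under all these refinements — is the delicate point. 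Once normalization is complete, the lemma follows: $X$ is a finite disjoint union of sets each of which, by construction and the inductive hypothesis applied to the $\emptyset$-definable base, is a $k$-cell over $\emptyset$.
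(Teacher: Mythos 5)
There is a genuine gap, and it sits exactly where your plan puts all the weight: in the claim that, because $X$ is $\emptyset$-definable, a cell decomposition of $X$ with arbitrary parameters can be ``normalized'' cell by cell to one whose defining polynomials have zero constant term. The individual cells of a parametrized decomposition of an $\emptyset$-definable set are in general not $\emptyset$-definable at all (only their union is), so there is no local obstruction-free way to strip the constants from each cell. Your concrete mechanism does not repair this: the conditions in a cell constrain $t-c(x)=(t-\ell_c(x))-c_0$, and for a fixed $x$ the valuation $v\bigl((t-\ell_c(x))-c_0\bigr)$ is not determined by $v(t-\ell_c(x))$ together with $v(c_0)$ (when the two valuations agree, the valuation of the difference can be anything larger), so ``shifting the conditions to be about $t-\ell_c(x)$'' after partitioning $D$ by valuations of the finitely many constants produces genuinely different sets, not a refinement of the same cell. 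Worse, the refinements of $D$ you propose are cut out using those same arbitrary constants, hence are not $\emptyset$-definable, so the inductive hypothesis (which applies only to $\emptyset$-definable sets) cannot be invoked on them; at that point the argument is circular, since showing that these parameter-laden pieces decompose into cells over $\emptyset$ is precisely the problem you started with. Note also that you have misidentified the delicate point: the coset parameter $\lambda$ is harmless, because $\mathcal{L}_{\mathrm{aff}}$ has a unary function symbol for multiplication by every scalar, the definition of a cell over $\emptyset$ places no restriction on $\lambda$, and the condition $t-c(x)\in\lambda Q_{m,n}$ is parameter-free (it is equivalent to $\lambda^{-1}\cdot(t-c(x))\in Q_{m,n}$, as the paper checks separately). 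The only real issue is the constant terms of $a_1,a_2,c$, and that is where your argument fails.

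The paper avoids all of this by not taking the set-level cell decomposition as a black box: it re-traces the construction in Leenknegt's proof (Lemmas 2.3, 2.4, 2.5 and 2.7 of \cite{lee12}, i.e.\ the quantifier-free case plus closure of finite unions of cells under intersections and projections) and observes that every polynomial produced in those constructions is a linear combination of the polynomials in the input data. Hence if one starts from a parameter-free description of $X$, in which all linear polynomials have constant term $0$, then every cell produced along the way is a cell over $\emptyset$, and the projection step handles quantifiers. If you want a proof along your lines, you would need some substitute for this syntactic tracing --- for instance a uniform-in-parameters version of the cell decomposition from which a decomposition over the base could be extracted --- but the post hoc ``absorb the constants'' step as written cannot work.
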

\begin{proof}
	We trace the proof of the original cell decomposition result in \cite{lee12}. Lemmas 2.3 and 2.7 establish that finite unions of cells (in the case of finite residue field, equivalent to the ``semi-additive sets'' of Definition 2.6) are closed under intersections and projections respectively, and Lemma 2.5 (using Lemma 2.4) shows that all quantifier-free definable sets are semi-additive. It suffices to modify each of these four lemmas slightly. In all four lemmas, we modify the assumptions to require that all linear polynomials in the assumptions have constant term 0. In each construction, the polynomials in the results are linear combinations of the polynomials in the assumptions, and thus will also have constant term 0, allowing us to state the results in terms of $k$-cells \emph{over $\emptyset$}.
\end{proof}

This tells us that no nonzero constants are definable:
\begin{lem}
	In the structure $\mathcal{M}$ consisting of $\Q_p$ in the language $\mathcal{L}_\mathrm{aff}$, $\mathrm{dcl}(\emptyset) = \{0\}$.
\end{lem}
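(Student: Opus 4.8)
The plan is to invoke the $\emptyset$-definable cell decomposition of Lemma \ref{empty_cell}. Since $0$ is a constant symbol of $\mathcal{L}_{\mathrm{aff}}$ we have $0 \in \mathrm{dcl}(\emptyset)$, so the content is the reverse inclusion. Fix $a \in \mathrm{dcl}(\emptyset)$; then $\{a\}$ is an $\emptyset$-definable subset of $\Q_p^1$, so by Lemma \ref{empty_cell} it is a finite disjoint union of $1$-cells over $\emptyset$. As $\{a\}$ is a singleton, every cell appearing in this union is either empty or equal to $\{a\}$, so $\{a\}$ is itself a $1$-cell over $\emptyset$. It therefore suffices to show that a $1$-cell over $\emptyset$ in $\Q_p^1$ which happens to be a singleton must equal $\{0\}$.

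Next I would unwind the definition. A $1$-cell sits over a $0$-cell $D = \Q_p^0$, so its defining polynomials $a_1, a_2, c$ are polynomials of degree $\leq 1$ in no variables, i.e.\ constants; the phrase ``over $\emptyset$'' forces these constants to have zero constant coefficient, hence $a_1 = a_2 = c = 0$. So the cell has the form $C = \{\, t \in \Q_p : v(0)\ \square_1\ v(t)\ \square_2\ v(0),\ t \in \lambda Q_{m,n} \,\}$, where each $\square_i$ is $<$ or vacuous, $m,n \in \N$, and $\lambda \in \Q_p$.

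Then, using the convention $v(0) = \infty$: if $\square_1$ is $<$, the condition $\infty < v(t)$ is unsatisfiable and $C = \emptyset$; if $\square_1$ is vacuous it imposes nothing, and similarly $v(t)\ \square_2\ v(0)$ is either vacuous or says merely $t \neq 0$. Hence a nonempty such $C$ is either $\lambda Q_{m,n}$ or $\lambda Q_{m,n} \setminus \{0\}$. Now split on $\lambda$. If $\lambda = 0$, then since $1 \in Q_{m,n}(M)$ we get $\lambda Q_{m,n} = \{0\}$, so $C \in \{\emptyset, \{0\}\}$. If $\lambda \neq 0$, then, as noted in the description of $\mathcal{L}_{\mathrm{aff}}$, $Q_{m,n}(M) \setminus \{0\}$ is a finite-index subgroup of $\Q_p^\times$, hence infinite, and $0 \notin Q_{m,n}(M)$ by the explicit description $Q_{m,n}(M) = \bigcup_{k \in \Z} p^{km}(1 + p^n\Z_p)$; thus $\lambda Q_{m,n}$ is an infinite coset and $C$ is infinite, not a singleton. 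Therefore the only singleton $1$-cell over $\emptyset$ is $\{0\}$, so $a = 0$, completing the inclusion $\mathrm{dcl}(\emptyset) \subseteq \{0\}$.

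I do not foresee a genuine obstacle: the argument is essentially bookkeeping on top of Lemma \ref{empty_cell}. The only points that need care are that the defining polynomials of a $1$-cell over $\emptyset$ genuinely vanish (being constant with no constant term), the handling of the valuation inequalities via $v(0) = \infty$ (so they are trivial or unsatisfiable), and the fact that a nonzero scalar multiple of $Q_{m,n}(M)$ is infinite because $Q_{m,n}(M) \setminus \{0\}$ has finite index in $\Q_p^\times$.
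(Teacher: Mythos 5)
Your proposal is correct and follows essentially the same route as the paper: apply the $\emptyset$-definable cell decomposition of Lemma \ref{empty_cell} to the singleton $\{a\}$, observe that over the $0$-cell the defining polynomials must be identically $0$, and then check that the resulting cell is either $\{0\}$ or an infinite set of the form $\lambda Q_{m,n}$ (up to removing $0$), forcing $a = 0$. Your slightly more explicit case analysis of the valuation conditions via $v(0)=\infty$ and the finite-index argument for the infinitude of $\lambda Q_{m,n}$ are fine refinements of the same argument.
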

\begin{proof}
	If $a \in \mathrm{dcl}(\emptyset)$, then $\{a\}$ is $\emptyset$-definable, so it can be decomposed into $1$-cells over $\emptyset$. There can only be one cell in the decomposition, $\{a\}$. All of its defining polynomials take in variables from the unique $0$-cell, and thus consist only of their constant coefficient, which is 0. Thus the cell must be of the form
	$\{a\} = \{t \in D \times \Q_p | v(0) \square_1 v(t - 0) \square_2 v(0), t - 0 \in \lambda Q_{m,n}\}$. The condition $v(0) \square_1 v(t) \square_2 v(0)$ will define one of the following sets: $\emptyset, \{0\}, \Q_p \setminus \{0\}, \Q_p$, and the condition $t \in \lambda Q_{m,n}$ defines $\{0\}$ when $\lambda = 0$, and otherwise, $\lambda Q_{m,n} \subseteq \Q_p \setminus \{0\}$. Thus the whole cell is either $\{0\}$ or $\lambda Q_{m,n}$ which is infinite, so if it is a singleton $\{a\}$, we must have $a = 0$.
\end{proof}

We now check that our cell decomposition for $\emptyset$-definable sets yields $\emptyset$-definable cells:
\begin{lem}
	Any $k$-cell over $\emptyset$ is $\emptyset$-definable.
\end{lem}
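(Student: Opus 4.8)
The plan is to proceed by induction on $k$, matching the recursive structure of the definition of a $k$-cell over $\emptyset$. The base case $k=0$ is immediate: the $0$-cell is the singleton $\Q_p^0$, which is $\emptyset$-definable (it is the whole space of the empty product). For the inductive step, suppose $D$ is a $k$-cell over $\emptyset$ which, by the inductive hypothesis, is defined by some $\mathcal{L}_\mathrm{aff}$-formula $\delta(x)$ without parameters, and consider a $(k+1)$-cell over $\emptyset$ of the form
$$C = \{(x,t) \in D \times \Q_p \mid v(a_1(x)) \,\square_1\, v(t-c(x)) \,\square_2\, v(a_2(x)),\ t - c(x) \in \lambda Q_{m,n}\}.$$

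First I would observe that each ingredient in the defining condition is $\emptyset$-definable. By the definition of $(k+1)$-cell over $\emptyset$, the defining polynomials $a_1, a_2, c$ are linear with constant coefficient $0$, hence are $\Q_p$-linear combinations $\sum c_i x_i$ of the coordinate variables; each such map is a composition of the scalar-multiplication function symbols $\{c\cdot\}_{c\in\Q_p}$ and $+$, so $a_1(x), a_2(x), c(x)$ are all given by $\emptyset$-definable terms. The condition $v(a_1(x)) \,\square_1\, v(t-c(x))$ is expressible using the divisibility predicate $\mid$ (recall $x \mid y$ abbreviates $v(x) \le v(y)$): when $\square_1$ is ``$<$'' we write $a_1(x) \mid (t - c(x)) \wedge \neg((t-c(x)) \mid a_1(x))$, and when $\square_1$ is no condition we simply omit the conjunct; similarly for $\square_2$ with $a_2$. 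The condition $t - c(x) \in \lambda Q_{m,n}$ is handled by cases on whether $\lambda = 0$ (giving $t - c(x) = 0$, $\emptyset$-definable) or $\lambda \ne 0$: in the latter case, since the only obstruction is the parameter $\lambda$, I would use that $\lambda Q_{m,n}$ is a coset of the $\emptyset$-definable finite-index subgroup $Q_{m,n}(M)\setminus\{0\}$, and argue that membership in any such coset is $\emptyset$-definable — here one can invoke the previous lemma ($\mathrm{dcl}(\emptyset)=\{0\}$) together with the fact that there are only finitely many cosets, so the relevant coset is picked out by a quantifier-free formula in $Q_{m,n}$, or more directly note that $z \in \lambda Q_{m,n}$ iff $Q_{m',n'}$-type conditions on $z$ hold for suitable indices, all parameter-free.

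Assembling these pieces, $C$ is defined by the conjunction of $\delta(x)$ with the parameter-free formulas expressing the three conditions above, so $C$ is $\emptyset$-definable, completing the induction. The step I expect to be the main obstacle is verifying carefully that the coset condition $t - c(x) \in \lambda Q_{m,n}$ is genuinely $\emptyset$-definable when $\lambda \ne 0$ — the scalar $\lambda$ is a parameter, and although $\lambda\, Q_{m,n}$ is one of finitely many cosets of an $\emptyset$-definable subgroup, one must check that the language $\mathcal{L}_\mathrm{aff}$ is rich enough to name that particular coset without parameters. This should follow because the coset is determined by which of the finitely many values $Q_{m',n'}$-predicates take on it, and those predicates are in the language; but it is the one point where a short explicit argument, rather than a routine appeal to the cell-decomposition machinery, is needed.
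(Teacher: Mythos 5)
Your induction and the treatment of the first two conditions match the paper's proof: the base case, the use of the inductive hypothesis for $D$, the observation that linear polynomials with constant coefficient $0$ are $\emptyset$-definable via the scalar-multiplication symbols, the expression of the valuation comparisons through $\mid$, and the case $\lambda = 0$ are all fine. The gap is exactly at the step you yourself flag as the main obstacle, and the resolution you sketch does not work. The coset $\lambda Q_{m,n}$ is \emph{not} in general determined by ``which of the finitely many values $Q_{m',n'}$-predicates take on it'': for $p \geq 5$, let $\zeta \in \Z_p^\times$ be a primitive $(p-1)$-th root of unity and consider $\zeta$ and $\zeta^{-1}$, which are distinct. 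Both have valuation $0$ and reduce mod $p$ to elements of $\F_p^\times$ different from $1$, so neither satisfies any predicate $Q_{m',n'}$; yet $\zeta/\zeta^{-1} = \zeta^2$ also has valuation $0$ and reduction $\neq 1$, hence $\zeta^2 \notin Q_{m,n}$, so $\zeta$ and $\zeta^{-1}$ lie in \emph{different} cosets of $Q_{m,n}$ while having identical $Q$-membership profiles. Consequently no boolean combination of the predicates $Q_{m',n'}$ evaluated at the element itself can isolate a single coset. The other route you mention, invoking $\mathrm{dcl}(\emptyset)=\{0\}$ plus the finiteness of the set of cosets, is not an argument for $\emptyset$-definability either; if anything, that lemma says there is no nonzero constant available to name the coset.

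The fix is immediate and is exactly what the paper does: $\mathcal{L}_\mathrm{aff}$ contains a unary function symbol $c\,\cdot$ for \emph{every} scalar $c \in \Q_p$, in particular for $\lambda^{-1}$. Hence for $\lambda \neq 0$ the condition $t - c(x) \in \lambda Q_{m,n}$ is equivalent to $Q_{m,n}\bigl(\lambda^{-1}\cdot(t - c(x))\bigr)$, a parameter-free formula, since the scalar enters as a function symbol of the language rather than as a parameter. You already use precisely this device when arguing that the defining polynomials are $\emptyset$-definable terms, so only this one-line observation is missing; with it, your proof coincides with the paper's.
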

\begin{proof}
	We prove this by induction on $k$. The $k = 0$ case is trivial. The $(k + 1)$-cell $\{(x, t) \in D \times \Q_p | v(a_1 (x)) \square_1 v(t - c(x)) \square_2 v(a_2(x)), t - c(x) \in \lambda Q_{m,n}\}$ is $\emptyset$-definable if $D$ is, $v(a_1 (x)) \square_1 v(t - c(x)) \square_2 v(a_2(x))$ is, and $t - c(x) \in \lambda Q_{m,n}$ is. We have that $D$ is by the induction hypothesis. For the next condition, it suffices to observe that the defining polynomials are $\emptyset$-definable functions if and only if they have constant coefficient 0, because scalar multiplication is $\emptyset$-definable, but no constant other than 0 is. For the final condition, we see that if $\lambda = 0$, then $t - c(x) \in \lambda Q_{m,n}$ is equivalent to $t - c(x) = 0$, which is $\emptyset$-definable, and if $\lambda \ne 0$, then $t - c(x) \in \lambda Q_{m,n}$ is equivalent to $\lambda^{-1}\cdot (t - c(x)) \in Q_{m,n}$, which is $\emptyset$-definable.
\end{proof}

We now want to generalize the following quantifier-elimination result to the $\emptyset$-definable case:
\begin{lem*}[{\cite[Theorem 4.2.1]{bobkov}}]
	Any $\mathcal{L}_{\mathrm{aff}}$-formula (with parameters) $\phi(x;y)$ where $x$ and $y$ are finite tuples of variables is equivalent in the $\mathcal{L}_{\mathrm{aff}}$-structure $\Q_p$ to a boolean combination of formulas from a collection 
	$$\Phi_\phi=\{v(p_i(x)-c_i(y))<v(p_j(x)-c_j(y))\}_{i,j \in I}\cup \{p_i(x)-c_i(y) \in \lambda Q_{m,n}\}_{i\in I,\lambda \in \Lambda}$$
	where $I=\{1,\dots,\abs{I}\}$ is a finite index set, each $p_i$ is a degree $\leq 1$ polynomial with constant term 0, each $c_i$ is a degree $\leq 1$ polynomial, and $\Lambda$ is a finite set of coset representatives of $Q_{m,n}$ for some $m,n \in \N$.
\end{lem*}

Bobkov derives this result from the cell decomposition. If we apply the same logic to the $\emptyset$-definable cell decomposition from Lemma \ref{empty_cell}, then all of the polynomials involved have constant term 0, and thus all formulas involved are $\emptyset$-definable:
\begin{lem}\label{empty_qe}
	Any $\mathcal{L}_{\mathrm{aff}}$-formula $\phi(x;y)$ where $x$ and $y$ are finite tuples of variables is equivalent in the $\mathcal{L}_{\mathrm{aff}}$-structure $\Q_p$ to a boolean combination of formulas from a collection 
	$$\Phi_\phi=\{v(p_i(x)-c_i(y))<v(p_j(x)-c_j(y))\}_{i,j \in I}\cup \{p_i(x)-c_i(y) \in \lambda Q_{m,n}\}_{i\in I,\lambda \in \Lambda}$$
	where $I=\{1,\dots,\abs{I}\}$ is a finite index set, each $p_i$ and each $c_i$ is a degree $\leq 1$ polynomial with constant term 0 and $\Lambda$ is a finite set of coset representatives of $Q_{m,n}$ for some $m,n \in \N$.
\end{lem}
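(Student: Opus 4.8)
The plan is to run Bobkov's derivation of \cite[Theorem 4.2.1]{bobkov} from $p$-adic cell decomposition, but feeding it the $\emptyset$-definable cell decomposition of Lemma \ref{empty_cell} in place of the version with parameters, and then checking that no nonzero constant terms are reintroduced along the way. Since $\phi(x;y)$ is parameter-free, the set $X := \phi(\Q_p^{\abs{x}+\abs{y}})$ it defines is $\emptyset$-definable, so by Lemma \ref{empty_cell} it is a finite disjoint union of $(\abs{x}+\abs{y})$-cells over $\emptyset$.

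First I would unwind the recursive definition of a cell over $\emptyset$. An $(i+1)$-cell over $\emptyset$ sitting over an $i$-cell $D$ is cut out inside $D \times \Q_p$ by the conjunction of a condition $v(a_1(z)) \, \square_1 \, v(t - c(z)) \, \square_2 \, v(a_2(z))$ and a condition $t - c(z) \in \lambda Q_{m,n}$, where $a_1, a_2, c$ are degree $\leq 1$ polynomials with constant term $0$; iterating down to the $0$-cell shows that each cell in the decomposition of $X$ is defined by a conjunction of formulas of the form $v(q(z)) < v(q'(z))$, $v(q(z)) = v(q'(z))$, and $q(z) \in \lambda Q_{m,n}$, where $z = (x,y)$ and each $q, q'$ is a degree $\leq 1$ polynomial with constant term $0$ (each $v(a_1) \, \square_1 \, v(t-c)$ contributing such an atom with $q = a_1$, $q' = t - c$, and so on). Writing $X$ as the disjunction over its cells then exhibits $\phi(x;y)$ as a boolean combination of atoms of this shape. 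The only real point to check, exactly as in the proof of Lemma \ref{empty_cell}, is that every polynomial arising in Bobkov's construction is a linear combination of the cell-defining polynomials and hence still has constant term $0$; this bookkeeping is the (mild) obstacle, and it has essentially already been done in proving Lemma \ref{empty_cell}.

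Finally I would separate the variables. Every degree $\leq 1$ polynomial $q(x,y)$ with constant term $0$ decomposes uniquely as $q(x,y) = p(x) + p'(y)$ with $p, p'$ linear of constant term $0$; setting $c(y) := -p'(y)$ rewrites $q(x,y)$ as $p(x) - c(y)$ with $p$ and $c$ both of constant term $0$ (this step is precisely why the constant-term-$0$ control above is needed). Applying this to every polynomial occurring, rewriting $v(q) = v(q')$ as $\neg(v(q) < v(q')) \wedge \neg(v(q') < v(q))$, collecting all the resulting linear forms into one finite index set $I$, and — as in the proof of Theorem \ref{presthm}, using that each $Q_{m,n}(\Q_p) \setminus \{0\}$ has finite index in the multiplicative group of $\Q_p$ — replacing the finitely many predicates $Q_{m,n}$ by a single common one with a finite set $\Lambda$ of coset representatives, puts $\phi(x;y)$ in exactly the claimed form.
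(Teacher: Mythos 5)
Your proposal is correct and follows essentially the same route as the paper: the paper's proof is precisely the observation that Bobkov's derivation of his Theorem 4.2.1 from cell decomposition, when fed the $\emptyset$-definable cell decomposition of Lemma \ref{empty_cell}, only ever produces linear combinations of the cell-defining polynomials and hence keeps all constant terms zero. Your unwinding of the cells, splitting $q(x,y)=p(x)-c(y)$, and passage to a single $Q_{m,n}$ with coset representatives $\Lambda$ just makes that derivation explicit.
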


As a corollary of this lemma and Lemma \ref{bool_comb}, we see that we can replace $\Phi$ with the set $\bigcup_\phi \Phi_\phi$, and thus assume that $\Phi$ takes the form
$$\{v(p_i(x)-c_i(y))<v(p_j(x)-c_j(y))\}_{i,j \in I}\cup \{p_i(x)-c_i(y) \in \lambda Q_{m,n}\}_{i\in I,\lambda \in \Lambda}.$$
for some fixed $m,n \in \N$.

We now recall some terminology from Bobkov \cite{bobkov}.

\begin{defn} [\cite{bobkov}, Def. 4.2.3]\label{subinterval}
	For the rest of this section, we fix $B \subset M^{\abs{y}}$, and let $T=\{c_i(b):i \in I,b \in B\}$.

\begin{itemize}
	\item For $c \in \Q_p$ and $r \in \Z$, we define $B_r(c):=\{x:v(x-c)>r\}$ and refer to it as the \emph{open} ball of radius $r$ around $c$.
	\item Let the \emph{subintervals} over a parameter set $B$ be the atoms in the Boolean algebra generated by the set of balls
$$\mathcal{B}:=\{B_{v(c_i(b_1)-c_j(b_2))}(c_i(b_1)):i,j \in I,b_1,b_2 \in B\}\cup\{B_{v(c_j(b)-c_k(b))}(c_i(b)):i,j,k \in I,b \in B\}$$
	\item Each subinterval can be expressed as $I(t,\alpha_L,\alpha_U)$ where
$$I(t,\alpha_L,\alpha_U)=B_{\alpha_L}(t)\setminus \bigcup_{t' \in T\cap B_{\alpha_U-1}(t)} B_{\alpha_U}(t'),$$
for some $t=c_i(b_0)$ with $i \in I, b_0 \in B$, and $\alpha_L=\alpha_1(b_0,b_1),\alpha_U=\alpha_2(b_0,b_2)$, with $\alpha_1,\alpha_2$ chosen from a finite set $A$ of $\emptyset$-definable functions $\Q_p^2 \to \Gamma$, including two functions defined, by abuse of notation, as $\pm \infty$.
	\item The subinterval $I(t,\alpha_L,\alpha_U)$ is said to be \emph{centered} at $t$.
\end{itemize}
\end{defn}

By this definition, it is not clear that $I(t,\alpha_L,\alpha_U)$ should be uniformly definable from parameters in $B$, as the set $T \cap B_{\alpha_U-1}(t)$ could depend on all of $B$. However, we can eliminate most of the balls from that definition. The ball $B_{\alpha_U-1}(t)$ can be split into $p$ balls of the form $B_{\alpha_U}(t')$ for some $t' \in \Q_p$, call them $B_1,\dots,B_p$. Let $T'$ be a subset of $T \cap B_{\alpha_U-1}(t)$ such that for each $B_i$, if $T \cap B_i\neq \emptyset$, then $T'$ contains only a single representative $t_i$ from $B_i$. Then 
$$\bigcup_{t' \in T \cap B_{\alpha_U-1}(t)}B_{\alpha_U}(t')=\bigcup_{t' \in T'}B_{\alpha_U}(t'),$$
because each $t' \in T \cap B_{\alpha_U-1}(t)$ belongs to some $B_i$, so $B_{\alpha_U}(t')=B_i=B_{\alpha_U}(t_i)$. We may assume $\abs{T'}$ to be at most $p-1$, because if all $p$ balls were removed, we could instead define this set as $I(t,\alpha_L,\alpha_U-1)$.
Thus each subinterval can be defined as $I(t,\alpha_L,\alpha_U)=\psi_{\mathrm{sub}}(t,\alpha_L,\alpha_U,\bar b)$, where $\psi_{\mathrm{sub}}$ is one of a finite collection $\Psi_{\mathrm{sub}}$ of formulas, and $\bar b$ is a tuple of at most $p-1$ elements of $B$.

\begin{defn} [\cite{bobkov}, Def. 4.2.5] \label{Tval}
	For $a \in \Q_p$, define $T\mathrm{-val}(a) := v(a-t)$, where $a$ belongs to a subinterval centered at $t$. By Lemma 4.2.6, \cite{bobkov}, this is well-defined, as $v(a-t)$ is the same for all valid choices of $t$.
\end{defn}

\begin{defn}[\cite{bobkov}, Def. 4.2.8]
	Given a subinterval $I(t, \alpha_L, \alpha_U)$, two points $a_1,a_2$ in that subinterval are defined to have the same \emph{subinterval type} if one of the following conditions is satisfied:
\begin{itemize}
	\item $\alpha_L+n\leq T\mathrm{-val}(a_i)\leq \alpha_U-n$ for $i=1,2$ and $(a_1-t)(a_2-t)^{-1}\in Q_{m,n},$
	\item $\neg(\alpha_L+n\leq T\mathrm{-val}(a_i)\leq \alpha_U-n)$ for $i=1,2$ and $T\mathrm{-val}(a_1)=T\mathrm{-val}(a_2)\leq v(a_1-a_2)-n.$
\end{itemize}
\end{defn}

We show that the set of points of each subinterval type is definable over $t,\alpha_L,\alpha_U$. The subinterval types of the first kind are definable by $$\psi_{\mathrm{tp}}^\lambda(x;t,\alpha_L,\alpha_U):=(\alpha_L+n\leq v(x-t)\leq \alpha_U-n)\wedge (x-t) \in \lambda Q_{m,n}$$
where $\lambda \in \Lambda$. The subinterval types of the second kind are definable by one of
$$\psi_{\mathrm{tp}}^{L,i,q}(x;t,\alpha_L,\alpha_U):=(v(x-t)=\alpha_L + i)\wedge (\alpha_L + i+n\leq v(x-(p^{\alpha_L + i} q + t)))$$
or
$$\psi_{\mathrm{tp}}^{U,i,q}(x;t,\alpha_L,\alpha_U):=(v(x-t)=\alpha_U - i)\wedge (\alpha_U - i+n\leq v(x-(p^{\alpha_U - i} q + t))),$$
where $0\leq i < n$, and $q$ ranges over a set $Q$ of representatives of the balls of radius $n$ contained in $B_0(0)\setminus B_{1}(0)$. If we let $\alpha$ be $\alpha_L + i$ or $\alpha_U - i$, this makes $p^\alpha q + t$ range over a finite set of representatives of the balls of radius $\alpha+n$ contained in the set $B_\alpha(t)\setminus B_{\alpha+1}(t)$ of points $a$ with $v(a-t)=\alpha$. Let $\Psi_{\mathrm{tp}}$ be the set of all these formulas: $\{\psi_{\mathrm{tp}}^\lambda: \lambda \in \Lambda\}\cup \{\psi_{\mathrm{tp}}^{L,i,q}:0 \leq i < n,q \in Q\}\cup \{\psi_{\mathrm{tp}}^{U,i,q}:0 \leq i < n,q \in Q\}$.

\subsection{Defining the Distal Cell Decomposition}
We start by defining $\Psi(x;(y_{0,i}: i \in I), (y_{1,i}: i \in I), (y_{2,i}: i \in I))$ to be the set of all formulas $\psi(x;(y_{0,i}: i \in I), (y_{1,i}: i \in I), (y_{2,i}: i \in I))$ of the form
\begin{align*}
	\left(\bigwedge_{i \in I}\left[\psi_{\mathrm{sub}}^i(p_i(x),t_i,\alpha_{L,i},\alpha_{U,i},\bar y_i)\wedge \psi_{\mathrm{tp}}^i(p_i(x),t_i,\alpha_{L,i},\alpha_{U,i})\right]\right) \wedge \psi_{\sigma}(x;t_1,\dots,t_{\abs{I}})
\end{align*}
where $\psi_{\mathrm{sub}}^i\in \Psi_{\mathrm{sub}}$, $\psi_{\mathrm{tp}}^i \in \Psi_{\mathrm{tp}}$, $\psi_{\sigma}(x,t_1,\dots,t_{\abs{I}})$ is, for some permutation $\sigma$ of $I$,
$$v(p_{\sigma(1)}(x)-t_{\sigma(1)})>\dots >v(p_{\sigma(\abs{I})}(x)-t_{\sigma(\abs{I})}),$$
and we define $t_i,\alpha_{L,i},\alpha_{U,i}$ so that $t_i = c_j(y_{0,i})$ for some $j \in I$, $\alpha_{L,i} = \alpha_1(y_{0,i}, y_{1,i})$, and $\alpha_{L,i} = \alpha_2(y_{0,i}, y_{2,i})$ for some $\alpha_1, \alpha_2 \in A$.

For each potential cell $\Delta$, we will define $\mathcal{I}(\Delta)$ so that $\Delta$ will be included in $\mathcal{T}(B)$ exactly when each set $\psi_{\mathrm{sub}}^i(M,t_i,\alpha_{L,i},\alpha_{U,i},\bar b_i)$ is actually a subinterval. Then each cell of $\mathcal{T}(B)$ will consist of all elements $a \in M^{\abs{x}}$ such that for all $i$, $p_i(a)$ belongs to a particular subinterval and has a particular subinterval type, and the set $\{T-\mathrm{val}(p_i(a)): i \in I\}$ has a particular ordering. These cells are not crossed by $\Phi(x;B)$, as a consequence of the following lemma:
\begin{lem}[{\cite[Lemma 4.2.12]{bobkov}}]
	Suppose $d, d' \in \Q_p$ satisfy the following three conditions:
\begin{itemize}
	\item For all $i \in I$, $p_i (d)$ and $p_i (d' )$ are in the same subinterval.
	\item For all $i \in I$, $p_i (d)$ and $p_i (d' )$ have the same subinterval type.
	\item For all $i, j \in I$, $T -\mathrm{val}(p_i (d)) > T -\mathrm{val}(p_j (d))$ iff $T -\mathrm{val}(p_i (d' )) > T -\mathrm{val}(p_j (d' ))$.

Then $d, d'$ have the same $\Phi$-type over $B$.
\end{itemize}
\end{lem}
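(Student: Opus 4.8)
The plan is to follow Bobkov's proof of \cite[Lemma 4.2.12]{bobkov}, which goes through in the present $\emptyset$-definable setting since every set it manipulates (subintervals, subinterval types, $T\mathrm{-val}$) was shown above to be $\emptyset$-definable. Unwinding the definition of $\Phi$-type, it suffices to check that for each $\varphi \in \Phi$ and each $b \in B$ one has $\Q_p \models \varphi(d;b)$ iff $\Q_p \models \varphi(d';b)$. After the reductions following Lemma \ref{empty_qe}, $\varphi$ is either an inequality $v(p_i(x)-c_i(y))<v(p_j(x)-c_j(y))$ or a coset condition $p_i(x)-c_i(y)\in\lambda Q_{m,n}$, and since $c_i(b)\in T$ for every $i\in I$ and $b\in B$, everything reduces to controlling, for each $i\in I$ and each $t'\in T$, the valuation $v(p_i(d)-t')$ and the $Q_{m,n}$-coset of $p_i(d)-t'$ in terms of the three given data: the subinterval containing $p_i(d)$, its subinterval type, and the ordering of the values $T\mathrm{-val}(p_i(\cdot))$.

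The core is an ultrametric case split. Fix $i$, let $S$ be the common subinterval of $p_i(d)$ and $p_i(d')$, with center $t_i$. Each value $v(t_i-t')$ arising here is a radius of a ball in the family $\mathcal{B}$ generating the subintervals, so the atom $S$ lies strictly inside or strictly outside that ball; combined with the safe-range condition $\alpha_L+n\leq T\mathrm{-val}\leq\alpha_U-n$ built into the definition of subinterval type, this yields a dichotomy. In case (a), $v(t_i-t')$ is small enough that $v(p_i(d)-t')=v(t_i-t')$ and the $Q_{m,n}$-coset of $p_i(d)-t'$ both depend only on $S$ (because $p_i(d)-t_i$ has much larger valuation), so they agree for $d$ and $d'$. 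In case (b), $v(t_i-t')$ is large enough that $t'$ is a legitimate center for the relevant part of $S$ (this is the content of the well-definedness of $T\mathrm{-val}$, \cite[Lemma 4.2.6]{bobkov}), so $v(p_i(d)-t')=T\mathrm{-val}(p_i(d))$ and $p_i(d)-t'$ differs from $p_i(d)-t_i$ by an element of valuation at least $T\mathrm{-val}(p_i(d))+n$; hence the definition of ``same subinterval type'' forces $p_i(d)-t'$ and $p_i(d')-t'$ into the same coset $\lambda Q_{m,n}$. This already settles every coset formula of $\Phi$.

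For an inequality $v(p_i(x)-c_i(y))<v(p_j(x)-c_j(y))$ at $y=b$, I would set $u_i=v(p_i(d)-c_i(b))$, $u_j=v(p_j(d)-c_j(b))$ and $u_i',u_j'$ for $d'$, note that whether index $i$ (resp. $j$) falls into case (a) or (b) depends only on the subinterval and is thus the same for $d$ and $d'$, and observe that in case (a) the value $u_i$ is fixed by $S$ (with $u_i=u_i'$), while in case (b) $u_i=T\mathrm{-val}(p_i(d))$. Checking $u_i<u_j\iff u_i'<u_j'$ is then a finite case analysis over the combinations of (a)/(b) for $i$ and $j$; the one combination that really uses the third hypothesis is both-case-(b), where $u_i<u_j$ iff $T\mathrm{-val}(p_i(d))<T\mathrm{-val}(p_j(d))$ iff $T\mathrm{-val}(p_i(d'))<T\mathrm{-val}(p_j(d'))$ iff $u_i'<u_j'$. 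This exhausts $\Phi$, so $d$ and $d'$ realize the same $\Phi$-type over $B$. I expect the main obstacle to be the dichotomy of the second paragraph — pinning down the exact valuation thresholds so that no subinterval straddles a level $v(t_i-t')$ with $t'\in T$, and so that in the mixed (a)/(b) combinations the comparison of a fixed valuation with a $T\mathrm{-val}$ is genuinely determined by the subinterval-type data; this is precisely the bookkeeping that the careful choice of the generating balls $\mathcal{B}$ and of the ranges $[\alpha_L+n,\alpha_U-n]$ was designed to make work.
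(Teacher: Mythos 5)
Your approach matches the paper's: this lemma is quoted verbatim from Bobkov's Lemma 4.2.12 and the paper supplies no proof of its own, relying -- exactly as you do -- on the fact that the definitions of subintervals, subinterval types and $T\mathrm{-val}$, and the reduction of $\Phi$ to the normal form of Lemma \ref{empty_qe}, are taken unchanged from Bobkov's setting (the $\emptyset$-definability adjustments concern only the defining formulas, not this purely ultrametric statement). Your case-analysis sketch is a reasonable outline of Bobkov's argument, so there is nothing further to check against the paper.
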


Now we check that we can actually define $\mathcal{I}(\Delta)$ as desired. For some $\psi_{\mathrm{sub}}(x,t,\alpha_{L},\alpha_{U},\bar b)$ to be a subinterval, we must check that it actually equals $I(t,\alpha_{L},\alpha_{U})$, and that that set is not crossed by any other balls in $\mathcal{B}$. If $\bar b = (b_1,\dots,b_{p-1})$, then there are $j_1,\dots,j_{p-1}\in I$ with this set equal to $B_{\alpha_L}(t)\setminus \bigcup_{k=1}^{p-1} B_{\alpha_U}(c_{j_k}(b_k))$. This is actually $I(t,\alpha_L,\alpha_U)$ as long as there is no $i\in I$, $b \in B$ with $v(c_i(b)-t)=\alpha_U$, but $c_i(b) \not \in \bigcup_{k=1}^{p-1} B_{\alpha_U}(c_{j_k}(b_k))$. The only way for this to happen is if $v(c_i(b)-c_{j_k}(b_k))=\alpha_U$ for all $1\leq k< p$, so let $\mathcal{I}_1(\Delta)$ be the set of all $b \in B$ where this happens.

For $\Delta=I(t,\alpha_L,\alpha_U)$ to not be a subinterval, it must be crossed by some ball $B_\alpha(t^*)\in \mathcal{B}$. Such a ball crosses $I(t,\alpha_L,\alpha_U)$ if and only if $t^* \in B_{\alpha_L}(t)$, $\alpha_L<\alpha<\alpha_U$, and $$B_{\alpha}(t^*)\setminus \bigcup_{t' \in T \cap B_{\alpha_U-1}(t)}B_{\alpha_U}(t)\neq \emptyset.$$ This last condition follows from the previous two, as
$$\bigcup_{t' \in T \cap B_{\alpha_U-1}(t)}B_{\alpha_U}(t) \subsetneq B_{\alpha_U-1}(t),$$ and if $\alpha<\alpha_U$, then either $B_{\alpha_U-1}(t) \subset B_{\alpha}(t^*)$ or they are disjoint. The radius $\alpha$ can either be $v(c_j(b)-c_k(b))$, where $t^*=c_i(b)$, for some $i,j,k\in I$, or $v(t'-t^*)$ for some $t' \in T$. Let $\mathcal{I}_2(\Delta)$ be the set of all $b$ such that for some $i,j,k\in I$, $\alpha_L<v(c_j(b)-c_k(b))<\alpha_U$ and $\alpha_L<v(c_i(b)-t)$. This handles the former case. In the latter case, where $\alpha=v(t'-t^*)$, we see that as $\alpha_L<\alpha$, $t'\in B_{\alpha_L}(t^*)=B_{\alpha_L}(t)$, so $\alpha_L<v(t-t')$. Also, $\min\left\{v(t-t'),v(t-t^*)\right\}\leq v(t'-t^*)<\alpha_U$, so either the ball $B_{v(t-t')}(t)$ or $B_{v(t-t^*)}(t)$ has radius between $\alpha_L$ and $\alpha_U$, and thus crosses $\Delta$. Thus $\Delta$ is crossed by a ball of the form $B_{v(t'-t^*)}(t^*)$ if and only if it is crossed by a ball of the form $B_{v(t-t')}(t')$ if and only if there is some $t'\in T$ with $\alpha_L<v(t-t')<\alpha_U$, so we let $\mathcal{I}_3(\Delta)$ be the set of all $b$ such that there exists $i \in I$ with $\alpha_L<v(t-c_i(b))<\alpha_U$.

Then if we let $\mathcal{I}(\Delta)=\mathcal{I}_1(\Delta)\cup \mathcal{I}_2(\Delta)\cup \mathcal{I}_3(\Delta)$, which is uniformly definable from just the parameters used to define $\Delta$, then $\Delta$ is a subinterval if and only if $B\cap \mathcal{I}(\Delta)=\emptyset$, as desired.

\subsection{Counting the Distal Cell Decomposition}
To calculate the distal density of $\Phi$, we will count the number of cells of $\mathcal{T}(B)$ by following Bobkov's estimate of $\abs{S^\Phi(B)}$. Because our cells are defined less in terms of $x$ itself than the values $p_i(x)$, we define a function to shift our problem to study those values directly:
\begin{defn}[{\cite[Def. 4.3.4]{bobkov}}]
	Let $f: \Q_p^{\abs{x}}\to \Q_p^{I}$ be $(p_i(x))_{i\in I}$. Define the segment set $\mathrm{Sg}$ to be the image $f(\Q_p^{\abs{x}})$. 
\end{defn}

We will need a notation for recording certain coefficients of elements of $\Q_p$:
\begin{defn}[{\cite[Def. 4.2.9]{bobkov}}]
	For $c \in \Q_p$, $\alpha<\beta \in v(\Q_p)$, $c$ can be expressed uniquely as $\sum_{\gamma \in v(\Q_p)}c_\gamma p^\gamma$ with $c_\gamma \in \{0,1,\dots,p-1\}$. Then define $c \upharpoonright [\alpha,\beta)$ to be the tuple $(c_\alpha,c_{\alpha+1},\dots,c_{\beta-1})\in \{0,1,\dots,p-1\}^{\beta-\alpha}$.
\end{defn}

This coefficient function $\upharpoonright$ will be useful in allowing us to reduce the information of $\{a_i:i \in I\} \in \Q_p^I$ to a linearly independent subset together with a finite number of coordinates, using this lemma:
\begin{lem}[{\cite[Cor. 4.3.2]{bobkov}}]\label{5.1.2}
	Suppose we have a finite collection of vectors $\{\vec p_i\}_{i\in I}$ with each $\vec p_i \in \Q_p^{\abs{x}}$. Suppose $J \subseteq I$ and $i \in I$ satisfy $\vec p_i \in \mathrm{span} \{\vec p_j \}_{j\in J}$ , and we have $\vec c \in \Q_p , \alpha \in v(\Q_p)$ with $v(\vec p_j \cdot \vec c ) > \alpha$ for all $j \in J$. Then $v(\vec p_i \cdot \vec c ) > \alpha - \gamma$ for some $\gamma \in v(\Q_p),\gamma\geq 0$. Moreover $\gamma$ can be
chosen independently from $J, j, \vec c, \alpha$ depending only on $\{\vec p_i\}_{i\in I}$.
\end{lem}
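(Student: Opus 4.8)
The plan is to deduce the bound directly from the ultrametric inequality, and then extract the uniformity of $\gamma$ from the finiteness of $I$.

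First I would reduce to the case where the spanning set is linearly independent. Given $J \subseteq I$ and $i \in I$ with $\vec p_i \in \mathrm{span}\{\vec p_j\}_{j \in J}$, I would choose $J' \subseteq J$ maximal such that $\{\vec p_j\}_{j \in J'}$ is linearly independent; then $\{\vec p_j\}_{j\in J'}$ is a basis of $\mathrm{span}\{\vec p_j\}_{j \in J}$, so $\vec p_i \in \mathrm{span}\{\vec p_j\}_{j \in J'}$ and there are \emph{unique} scalars $\lambda_j \in \Q_p$ (for $j \in J'$) with $\vec p_i = \sum_{j \in J'}\lambda_j \vec p_j$. The key point is that $J'$ and the $\lambda_j$ depend only on the vectors $\{\vec p_j\}_{j \in J'}$ and $\vec p_i$, and in particular not on $\vec c$ or $\alpha$ or the ambient $J$.

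Next, dotting with $\vec c$ gives $\vec p_i \cdot \vec c = \sum_{j \in J'}\lambda_j(\vec p_j \cdot \vec c)$, so the ultrametric inequality yields
$$v(\vec p_i \cdot \vec c) \;\geq\; \min_{j \in J'}\bigl(v(\lambda_j) + v(\vec p_j \cdot \vec c)\bigr) \;>\; \alpha + \min_{j \in J'}v(\lambda_j),$$
where I use $v(\vec p_j \cdot \vec c) > \alpha$ for each $j \in J' \subseteq J$ (and the inequality is trivial if some, hence all relevant, $\vec p_j \cdot \vec c$ vanish, forcing $\vec p_i \cdot \vec c = 0$). Setting $\gamma_{J',i} := \max\{0,\,-\min_{j \in J'}v(\lambda_j)\} \geq 0$, this says precisely $v(\vec p_i \cdot \vec c) > \alpha - \gamma_{J',i}$, with $\gamma_{J',i}$ a nonnegative element of $v(\Q_p)$.

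Finally, for the uniformity claim: there are only finitely many pairs $(J', i)$ with $J' \subseteq I$, $\{\vec p_j\}_{j \in J'}$ linearly independent, $i \in I$, and $\vec p_i \in \mathrm{span}\{\vec p_j\}_{j\in J'}$, so I would put $\gamma := \max_{(J',i)}\gamma_{J',i}$, which by construction depends only on $\{\vec p_i\}_{i \in I}$. Any admissible $(J,i)$ as in the statement reduces, by the first step, to some such $(J',i)$ with $J' \subseteq J$, and then $v(\vec p_i \cdot \vec c) > \alpha - \gamma_{J',i} \geq \alpha - \gamma$, as required. I do not expect a genuine obstacle here: the only step needing a word of justification is that a maximal linearly independent subset $J'$ of $\{\vec p_j\}_{j\in J}$ still spans $\vec p_i$, which is immediate linear algebra, so the substance of the lemma is just the ultrametric triangle inequality combined with the fact that finitely many vectors exhibit only finitely many linear-dependence patterns.
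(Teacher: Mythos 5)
Your proof is correct, but note that the paper itself does not prove this statement: it is imported verbatim as a citation to Bobkov (Cor.\ 4.3.2), so there is no in-paper argument to compare against. Your argument — pass to a maximal linearly independent subset $J'\subseteq J$, write $\vec p_i=\sum_{j\in J'}\lambda_j\vec p_j$ with uniquely determined coefficients, apply the ultrametric inequality to get $v(\vec p_i\cdot\vec c)>\alpha+\min_{j\in J'}v(\lambda_j)$, and take $\gamma$ to be the maximum of $\max\{0,-\min_j v(\lambda_j)\}$ over the finitely many admissible pairs $(J',i)$ — is exactly the expected elementary proof of the cited fact, and the uniformity claim is handled correctly. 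One small remark: your parenthetical about the case where some $\vec p_j\cdot\vec c$ vanishes is both unnecessary and slightly off (one such product vanishing forces neither the others nor $\vec p_i\cdot\vec c$ to vanish); with the convention $v(0)=\infty$ the ultrametric estimate already covers that case, and the only genuine degenerate case is $\vec p_i=0$ (e.g.\ $J'=\emptyset$), where the conclusion is trivial.
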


As each homogeneous linear polynomial $p_i(x)$ can be written as the dot product $\vec p_i \cdot x$ for some $\vec p_i \in \Q_p^{\abs{x}}$, let $\gamma \in v(\Q_p)_{\geq 0}$ satisfy the criteria of Lemma \ref{5.1.2} for $\{\vec p_i\}_{i \in I}$.

\begin{defn}[{\cite[Def. 4.3.3]{bobkov}}]
	Any $a \in \Q_p$ belongs to a unique subinterval $I(t,\alpha_L,\alpha_U)$. Define $T-\mathrm{fl}(a):=\alpha_L$.
\end{defn}
Using this function, we partition $\mathrm{Sg}$ into $(2\abs{I})!$ pieces, corresponding to the possible order types of $\{T-\mathrm{fl}(x_i):i \in I\}\cup\{T-\mathrm{val}(x_i):i \in I\}$. We will show that each piece of this partition intersects only $\bigO(\abs{B}^{\abs{x}})$ cells of $\mathcal{T}(B)$.

Let $\mathrm{Sg}'$ be a piece of the partition. Relabel the functions $p_i$ such that
$$T-\mathrm{fl}(a_1)\geq \dots \geq T-\mathrm{fl}(a_{\abs{I}})$$
for all $(a_i)_{i \in I} \in \mathrm{Sg}'$. Using a greedy algorithm, find $J \subseteq I$ such that $\{\vec p_j\}_{j \in J}$, with the new labelling, is linearly independent, and for each $i \in I$, $\vec p_i$ is a linear combination of $\{\vec p_j\}_{j \in J, j<i}$.

\begin{defn}
	\begin{itemize}
		\item Denote $\{0,\dots,p-1\}^\gamma$ as Ct.
		\item Let Tp be the set of all subinterval types. Lemma 4.2.11 from \cite{bobkov} shows that $\abs{\mathrm{Tp}}\leq K$, where $K$ is a constant that does not depend on $B$.
		\item Let Sub be the set of all subintervals. Lemma 4.2.4 from \cite{bobkov} tells us that $\abs{\mathrm{Sub}}=O(\abs{B})$.
	\end{itemize}
\end{defn}

Now we can define a function identifying subintervals, subinterval types, and $\gamma$ many coefficients of the components of each element of $\mathrm{Sg}'$: 
\begin{defn}
	Define $g: \mathrm{Sg}'\to \mathrm{Tp}^I\times \mathrm{Sub}^J\times \mathrm{Ct}^{I\setminus J}$ as follows:

	Let $a = (a_i)_{i \in I}\in \mathrm{Sg}'$.

	For each $i \in I$, record the subinterval type of $a_i$ to form the component in $\mathrm{Tp}^I$.

	For each $j \in J$, record the subinterval of $a_j$ to form the component in $\mathrm{Sub}^J$.

	For each $i \in I\setminus J$, let $j\in J$ be maximal with $j <i$. Then record $a_i\upharpoonright [T-\mathrm{fl}(a_j)-\gamma,T-\mathrm{fl}(a_j)) \in \mathrm{Ct}$, and list all of these as the component in $\mathrm{Ct}^{I\setminus J}$.

	Combine these three components to form $g(a)$.
\end{defn}

As $\{\vec p_j\}_{j \in J}$ a linearly independent set in the $\abs{x}$-dimensional vector space $\Q_p^{\abs{x}}$, $\abs{J}\leq \abs{x}$, so 
$$\abs{\mathrm{Sg}'\to \mathrm{Tp}^I\times \mathrm{Sub}^J\times \mathrm{Ct}^{I\setminus J}}=\bigO(K^{\abs{I}}\cdot\abs{B}^{\abs{J}}\cdot p^{\gamma \abs{I\setminus J}})=\bigO(\abs{B}^{\abs{J}}),$$
and it suffices to show that if $a,a' \in \Q_p^{\abs{x}}$ are such that $f(a),f(a')\in \mathrm{Sg}'$, and $g(f(a))=g(f(a'))$, then $a,a'$ are in the same cell of $\mathcal{T}(B)$. That would show that the number of cells intersecting $\mathrm{Sg}'$ is at most $\abs{\mathrm{Sg}'\to \mathrm{Tp}^I\times \mathrm{Sub}^J\times \mathrm{Ct}^{I\setminus J}}=\bigO(\abs{B}^{\abs{x}})$. Then as the number of pieces in the partition is itself only dependent on $I$, the total number of cells in $\mathcal{T}(B)$ is also $\bigO(\abs{B}^{\abs{x}})$ as desired.

If $a,a'$ are such that $f(a),f(a')\in \mathrm{Sg}'$, then immediately we know that $(T-\mathrm{val}(p_i(a)))_{i \in I}$ and $(T-\mathrm{val}(p_i(a')))_{i \in I}$ have the same order type. If also $g(f(a))=g(f(a'))$, then for each $i \in I$, $p_i(a)$ and $p_i(a')$ have the same subinterval type, so it suffices to show that for each $i$, $p_i(a)$ and $p_i(a')$ are in the same subinterval. This is clearly true for $i \in J$, but we need to consider the $\mathrm{Ct}^{I\setminus J}$ component of $g$ to show that it is true for $i \in I \setminus J$. Bobkov shows this in Claim 4.3.8 and the subsequent paragraph of \cite{bobkov}. That argument is summarized here:

Fix such an $i \in I \setminus J$, and let $j\in J$ be maximal with $j <i$. By the definition of $\mathrm{Sg}'$, $T-\mathrm{fl}(a_i)\leq T-\mathrm{fl}(a_j)$ and $T-\mathrm{fl}(a_i')\leq T-\mathrm{fl}(a_j')$, and as $a_j,a_j'$ lie in the same subinterval, $T-\mathrm{fl}(a_j)=T-\mathrm{fl}(a_j')$. Claim 4.3.8 in \cite{bobkov} shows that $v(a_i-a_i')>T-\mathrm{fl}(a_j)-\gamma$. As the $\mathrm{Ct}$ components of $g(f(a))$ and $g(f(a'))$ are also the same, we know that $a_i\upharpoonright [T-\mathrm{fl}(a_j)-\gamma,T-\mathrm{fl}(a_j))=a_i'\upharpoonright [T-\mathrm{fl}(a_j')-\gamma,T-\mathrm{fl}(a_j'))$, but as $[T-\mathrm{fl}(a_j)-\gamma,T-\mathrm{fl}(a_j))=[T-\mathrm{fl}(a_j')-\gamma,T-\mathrm{fl}(a_j'))$ and $v(a_i-a_i')>T-\mathrm{fl}(a_j)-\gamma$, this tells us that even more coefficents of $a_i$ and $a_i'$ agree, so $v(a_i-a_i')>T-\mathrm{fl}(a_j)\geq \max(T-\mathrm{fl}(a_i),T-\mathrm{fl}(a_i'))$. Assume without loss of generality that $T-\mathrm{fl}(a_i)\leq T-\mathrm{fl}(a_i')$, and let the subintervals of $a_i$ and $a_i'$ be $I(t,T-\mathrm{fl}(a_i),\alpha_U)$ and $I(t',T-\mathrm{fl}(a_i'),\alpha_U')$. Then as $v(a_i-a_i')>T-\mathrm{fl}(a_i')$ and $v(t'-a_i')>T-\mathrm{fl}(a_i')$, the ultrametric inequality gives us $v(a_i-t')>T-\mathrm{fl}(a_i')$, so $a_i \in B_{T-\mathrm{fl}(a_i')}(t')$ and $a_i \in B_{T-\mathrm{fl}(a_i)}(t)$, so one ball is contained in the other. By the assumption on the radii, $B_{T-\mathrm{fl}(a_i')}(t') \subseteq B_{T-\mathrm{fl}(a_i)}(t)$. If the subintervals are distinct, they must be disjoint, in which case $B_{T-\mathrm{fl}(a_i')}(t') \subseteq B_{T-\mathrm{fl}(a_i)}(t)\setminus I(t,T-\mathrm{fl}(a_i),\alpha_U)$. However, $a_i \in B_{T-\mathrm{fl}(a_i')}(t') \cap I(t,T-\mathrm{fl}(a_i),\alpha_U)$, contradicting this. Thus the subintervals are the same.

\subsection{A Conjecture about Locally Modular Geometric Structures}
The following proposition, together with Theorem \ref{qpaffthm}, lends support to a conjecture about distal cell decompositions in locally modular geometric structures. Recall that a structure is \emph{geometric} when the $\mathrm{acl}$ operation defines a pregeometry and the structure is uniformly bounded (it eliminates the $\exists^\infty$ quantifier) \cite{geomstruct}. 

\begin{prop}
	The structure $\mathcal{M}$ with universe $\Q_p$ in the language $\mathcal{L}_{\mathrm{aff}}$ is a modular geometric structure.
\end{prop}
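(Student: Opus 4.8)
The plan is to establish the two requirements in the definition of a modular geometric structure: that $\mathcal{M}=(\Q_p,\mathcal{L}_{\mathrm{aff}})$ is uniformly bounded (eliminates $\exists^\infty$), and that $\mathrm{acl}$ defines a modular pregeometry on $\Q_p$. For uniform boundedness, recall (as noted above, following Leenknegt) that $\mathcal{M}$ is a reduct of $\Q_p$ in Macintyre's language, which is $P$-minimal and therefore eliminates $\exists^\infty$: in a $P$-minimal structure a definable subset of the line is a finite union of cells, the finite cells being singletons, and the number of cells in a definable family is uniformly bounded. Since $\exists^\infty$-elimination passes to reducts — an $\mathcal{L}_{\mathrm{aff}}$-definable family is a fortiori Macintyre-definable, so the same bound applies — $\mathcal{M}$ is uniformly bounded. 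Alternatively one can read this off directly from Lemma \ref{empty_qe}: a one-variable atomic $\mathcal{L}_{\mathrm{aff}}$-formula $\ell(x)=0$, $v(\ell_1(x))<v(\ell_2(x))$, or $Q_{m,n}(\ell(x))$ (with $\ell,\ell_i$ affine in $x$) defines either a set of size $\le 1$, or a finite Boolean combination of balls, or a union of finitely many multiplicative cosets, and in the last two cases the set is empty or infinite; hence any finite nonempty one-variable definable set is bounded by the number of equality atoms of the defining formula.

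The heart of the matter is that $\mathrm{acl}$ in $\mathcal{M}$ is as coarse as possible. Since scalar multiplication $c\cdot{}$ by every $c\in\Q_p$ is a function symbol, $\mathrm{dcl}(a)\supseteq \Q_p\cdot a=\Q_p$ for any $a\ne 0$; and $\mathrm{acl}(\emptyset)=\{0\}$ by the same argument that proves $\mathrm{dcl}(\emptyset)=\{0\}$, since by Lemma \ref{empty_cell} a finite $\emptyset$-definable subset of $\Q_p$ is a finite disjoint union of $1$-cells over $\emptyset$, each of which is $\{0\}$, $\emptyset$, or infinite. Consequently the only $\mathrm{acl}$-closed subsets of $\Q_p$ are $\{0\}$ and $\Q_p$, and more generally $\mathrm{acl}(X)=\{0\}$ when $X\subseteq\{0\}$ and $\mathrm{acl}(X)=\Q_p$ otherwise (if $a\in\mathrm{acl}(X)$ with $a\ne 0$ then $\mathrm{acl}(X)\supseteq\mathrm{acl}(a)=\Q_p$).

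With this description the pregeometry axioms are immediate: monotonicity, finite character and idempotency of $\mathrm{acl}$ are automatic, and exchange holds because the geometry has rank $\le 1$ — if $a\in\mathrm{acl}(Xb)\setminus\mathrm{acl}(X)$ then $\mathrm{acl}(X)=\{0\}$ and $a\ne 0$, so $\mathrm{acl}(Xb)=\Q_p$, which forces $b\ne 0$ and hence $b\in\Q_p=\mathrm{acl}(a)\subseteq\mathrm{acl}(Xa)$. Finally the pregeometry is modular: for $\mathrm{acl}$-closed $A,B$ the modular law $\mathrm{rk}(\mathrm{acl}(A\cup B))+\mathrm{rk}(A\cap B)=\mathrm{rk}(A)+\mathrm{rk}(B)$ holds, since each of $A,B$ is $\{0\}$ (rank $0$) or $\Q_p$ (rank $1$) and the identity is checked in each of the four cases. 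Thus $\mathcal{M}$ is a modular geometric structure. The only step involving any computation is the atomic-formula analysis behind uniform boundedness, and even that is dispensable given the appeal to $P$-minimality of $\Q_p$; everything else is a formal consequence of quantifier elimination together with the triviality of $\mathrm{acl}$.
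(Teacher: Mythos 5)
Your proof is correct, but it reaches the conclusion by a different route than the paper. For uniform boundedness the paper argues internally: it decomposes $\varphi(M;M^{\abs{y}})$ into Leenknegt cells and checks that each fiber of a cell is empty, infinite, or the singleton $\{c(b)\}$, so a finite fiber has size at most the number of cells; you instead quote that $\mathcal{L}_{\mathrm{aff}}$ is a reduct of the Macintyre structure and that elimination of $\exists^\infty$ passes to reducts (allowing the scalars as extra parameters), which is legitimate for $\Q_p$ though it leans on external facts rather than Lemma \ref{empty_cell}. Your optional ``alternative'' via Lemma \ref{empty_qe} is the one shaky step: negated valuation atoms can isolate points --- e.g.\ $\neg\left(v(x)<v(p\cdot x)\right)$ defines $\{0\}$ --- so a finite definable set need not be covered by the equality atoms alone, though it is still controlled by the zeros of the finitely many affine forms, so the idea is repairable. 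For the pregeometry, the paper identifies $\mathrm{acl}$ with $\Q_p$-linear span, again via cell fibers ($\psi(M;b)=\{c(b)\}$ when the fiber is finite), and then cites modularity of the vector-space pregeometry; you instead observe that since every scalar is a function symbol, $\mathrm{dcl}(a)=\Q_p$ for any $a\neq 0$, while $\mathrm{acl}(\emptyset)=\{0\}$ by Lemma \ref{empty_cell}, so the only closed sets are $\{0\}$ and $\Q_p$ and exchange and modularity reduce to a rank-$\leq 1$ case check. This is shorter and does prove the statement as the paper literally defines it (a property of the structure with universe $\Q_p$), but it buys less: in an elementary extension the pregeometry is no longer degenerate, and there one genuinely needs the inclusion $\mathrm{acl}(B)\subseteq\mathrm{span}(B)$ supplied by the paper's fiber argument; that stronger identification is also what gives the proposition its intended force for the ensuing conjecture about locally modular geometric structures.
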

\begin{proof}
	To check this, it suffices to check that this structure is uniformly bounded, and that its algebraic closure operation $\mathrm{acl}$ gives rise to a modular pregeometry.

	First we check uniform boundedness. That is, we wish to show that for all partitioned $\mathcal{L}_{\mathrm{aff}}$-formulas $\varphi(x;y)$ with $\abs{x} = 1$, there is some $n \in \N$ such that for all $b \in M^{\abs{y}}$, either $|\varphi(M;b)| \leq n$ or $\varphi(M;b)$ is infinite.

	By Lemma \ref{empty_cell}, $\varphi(M,M^{\abs{y}})$ is a disjoint union of $(\abs{y}+1)$-cells of the form $\{(x,y) \in \Q_p \times D | v(a_1 (y)) \square_1 v(x - c(y)) \square_2 v(a_2(y)), x - c(y) \in \lambda Q_{m,n}\}$. Let $n_\varphi$ be the number of cells in that disjoint union. We will show that for all $b \in M^{\abs{y}}$, either $|\varphi(M;b)|\leq n_\varphi$ or $\varphi(M;b)$ is infinite. To do this, we will show that for each cell $\Delta$, defined by the formula $\psi(x;y)$, that for all $b \in M^{\abs{y}}$, either the fiber $\psi(M;b)$ is infinite, or $|\psi(M;b)|\leq 1$. Then for $b \in M^{\abs{y}}$, if the original set $\varphi(M;b)$ is finite, then each fiber $\psi(M;b)$ of the cells are finite, and thus each is at most a singleton. Thus $|\varphi(M;b)|$ is at most the number of cells $n_\varphi$.
	
	Now consider a formula $\psi(x;y)$ that defines an $(\abs{y}+1)$-cell, and the fibers of $\psi(M;b)$ for various $b \in M^{\abs{y}}$. The fibers are of the form $\{x | v(a_1(b)) \square_1 v(x - c(b)) \square_2 v(a_2(b)), x - c(b) \in \lambda Q_{m,n}\}$, and we will show that any set of that form is either empty, infinite, or the singleton $\{c(b)\}$.

	For simplicity, let us assume $c(b) = 0$. This amounts just to a translation of the set, and will not effect its size. Then assume $a \in \{x | v(a_1(b)) \square_1 v(x) \square_2 v(a_2(b)), x \in \lambda Q_{m,n}\}$, and we will show either that the set is $\{a\}$, or that it is infinite. If $\lambda = 0$, then $\lambda Q_{m,n} = \{0\}$, so we have $a = 0$ and the set is $\{0\}$. Thus we assume $\lambda \ne 0$.
	As $a \in \lambda Q_{m,n}$, there are some $k \in \Z, z \in \Z_p$ such that $a = \lambda p^{km}(1+p^nz)$, and $v(a) = v(\lambda) + km + v(1 + p^nz)$. As $n \neq 0$, we have $v(p^n z) = nv(z) \geq n > 0$, so $v(1 + p^nz) = v(1) = 0$ by the ultrametric property, and $v(a) = v(\lambda) + km$. Now for any $z' \in \Z_p$, $v(\lambda p^{km}(1+p^nz')) = v(a)$, and $\lambda p^{km}(1+p^nz') \in \lambda Q_{m,n}$, so $\lambda p^{km}(1+p^nz')$ is also in this set. As $\lambda \ne 0$, these are all distinct elements of the set, which is infinite.

	Now we check that $\mathrm{acl}$ gives rise to a modular pregeometry. To do this, it suffices to check that $\mathrm{acl}$ is just the span operation, equal to $\mathrm{acl}$ in the plain vector space language, which also gives rise to a modular pregeometry. If $B \subseteq M, a \in M$, then $a \in \mathrm{acl}(B)$ if and only if there exists a formula $\varphi(x;y)$ with $\abs{x}=1$ and a tuple $b \in B^{\abs{y}}$ such that $\varphi(M,b)$ is finite and $\mathcal{M}\models \varphi(a,b)$. If we decompose $\varphi(M;M^{\abs{y}})$ into cells, then we see that there must exist a cell (say it is defined by $\psi(x;y)$) such that $a \in \psi(M,b)$. As $\psi(M,b) \subseteq \varphi(M,b)$ is also finite, and $\psi(x;y)$ defines a cell, $\psi(M;b) = \{c(b)\}$ for a defining polynomial $c$ of the cell, which can be assumed to be linear with constant coefficient 0. Thus $a = c(b)$, so $a$ is in the span of $B$. Clearly also the span of $B$ is contained in $\mathrm{dcl}(B) \subseteq \mathrm{acl}(B)$, so $\mathrm{acl} = \mathrm{dcl}$, and both represent the span.
\end{proof}

\begin{conj}
	We conjecture that all distal locally modular geometric structures admit distal cell decompositions of exponent 1. We have already shown this in the $o$-minimal case with Theorem \ref{ominvec}, and now we have shown this for the linear reduct of $\Q_p$ with Theorem \ref{qpaffthm}.
\end{conj}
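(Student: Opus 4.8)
A proof of this conjecture would presumably abstract what the two verified instances have in common. In the proof of Theorem \ref{ominvec} one uses the Loveys--Peterzil normal form (\cite[Cor.~6.3]{linomin}) to write every formula as a boolean combination of ``affine'' atomic formulas $f(x)+g(y)+c\,\square\,0$, feeds these into Lemma \ref{conj_lem} to get a distal cell decomposition whose cells biject with $\Phi$-types, and then bounds the number of types by $\bigO(\abs{B}^{\abs{x}})$ via the VC1 property. In the proof of Theorem \ref{qpaffthm} one uses Leenknegt's affine cell decomposition (\cite{lee12}, in the parameter-free form of Lemmas \ref{empty_cell} and \ref{empty_qe}) to put formulas in affine normal form, builds cells out of subintervals and subinterval types, and again counts them by Bobkov's argument, which is the $\bigO(\abs{B}^{\abs{x}})$ bound on $\Phi$-types in disguise. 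So a proof of the general conjecture would rest on two ingredients: \textbf{(A)} a structure theorem producing, in any distal locally modular geometric structure, a finite generating family of ``affine'' atomic formulas --- cosets of $\emptyset$-definable subgroups, graphs of $\emptyset$-definable partial endomorphisms, together with the comparisons coming from whatever $\emptyset$-definable preorder (a linear order, a valuation, congruences) the structure carries; and \textbf{(B)} the bound $\abs{S^\Phi(B)} = \bigO(\abs{B}^{\abs{x}})$ for every finite $\Phi(x;y)$ in such structures. Given these, I would run the argument through Lemma \ref{conj_lem} verbatim as in Theorem \ref{ominvec}: after replacing $\Phi$ by a boolean-generating set of atomic formulas via Lemma \ref{bool_comb}, pick from (A) a family $\Phi'$ closed under the operations Lemma \ref{conj_lem} requires, namely that the negation of each $\varphi \in \Phi'$ is a disjunction within $\Phi'$ and that each conjunction $\bigwedge_{b \in B}\varphi(x;b)$ collapses to a single $\varphi(x;b_0)$ or is inconsistent.

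Verifying those two closure properties should be routine once (A) is in hand: the negation of a coset condition is a finite union of coset conditions, the negation of an order condition is a union of two order conditions, a conjunction of parallel ``half-spaces'' is the extreme one, and a conjunction of cosets of a fixed subgroup is empty or equal to one of them --- exactly as in the proofs of Theorems \ref{ominvec} and \ref{presthm}. Lemma \ref{conj_lem} then returns a distal cell decomposition whose cells are in bijection with $S^\Phi(B)$, and (B) makes its exponent $\abs{x}$; this is optimal, since every structure already carries a $\Phi$ of dual VC density $\abs{x}$. As in the $\Q_p$ case, one must keep all of this data parameter-free --- the cells of a distal cell decomposition are $\emptyset$-definable --- which is the usual bookkeeping of arranging every affine term to have trivial constant coefficient, using that $\mathrm{dcl}(\emptyset)$ supplies a single ``origin''.

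The real difficulty is ingredient \textbf{(A)}: at present there is no general affine-normal-form theorem for distal locally modular geometric structures, only the two worked instances. Local modularity (equivalently $1$-basedness in this setting) is a statement about the abstract $\mathrm{acl}$-pregeometry --- canonical parameters of plane curves lie in the algebraic closure of boundedly many points, definable families of curves are ``flat'' --- and turning it into a uniform syntactic normal form that simultaneously accommodates the extra relational structure (the order, the valuation, the predicates $Q_{m,n}$, \dots) is precisely the missing step; a framework subsuming both \cite{linomin} and \cite{lee12} does not yet exist. Pending that, the realistic intermediate goals are: first, to prove the conjecture for any structure that \emph{already} admits an affine quantifier elimination of the \cite{linomin} or \cite{lee12} type, where (A) is given and only the routine steps above remain --- this would cover, for instance, ordered vector spaces enriched by finitely many congruence predicates, or affine reducts of other henselian valued fields of residue characteristic $0$; and second, to establish (B) in full generality, i.e.\ that distal locally modular geometric structures have VC density $\abs{x}$, which is expected but would itself require extending the analysis of \cite{vcdensityi}.
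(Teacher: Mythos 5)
The statement you were asked about is a conjecture: the paper does not prove it, and offers only the two worked instances (Theorems \ref{ominvec} and \ref{qpaffthm}) as evidence. You have correctly recognized this, and your reconstruction of how those two instances are proved is essentially accurate, as is your diagnosis that the missing ingredient is a general affine normal form for distal locally modular geometric structures subsuming both \cite{linomin} and \cite{lee12}. One small correction to your summary: the proof of Theorem \ref{qpaffthm} does \emph{not} route through Lemma \ref{conj_lem}, and its cells are not in bijection with the $\Phi$-types over $B$. Instead it builds cells directly from subintervals, subinterval types, and an ordering of the $T$-valuations, and the $\bigO(\abs{B}^{\abs{x}})$ count comes from Bobkov's segment-set argument (partitioning $\mathrm{Sg}$ by order type and using a linearly independent subfamily $\{\vec p_j\}_{j \in J}$ with $\abs{J} \leq \abs{x}$), so it is an upper bound on cells that happens to match the type count rather than the type count itself. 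This matters for your proposed unification: Lemma \ref{conj_lem} requires the rather strong hypothesis that conjunctions $\bigwedge_{b\in B}\varphi(x;b)$ collapse to a single instance, which holds for half-spaces and cosets of a fixed subgroup but fails for the valuation-comparison atoms $v(p_i(x)-c_i(y))<v(p_j(x)-c_j(y))$ in $\mathcal{L}_{\mathrm{aff}}$; any general argument would have to accommodate the Bobkov-style counting as well, not just the Lemma \ref{conj_lem} mechanism. With that caveat, your assessment that the conjecture remains open pending a structure theorem of type (A), and your proposed intermediate targets, are reasonable and consistent with what the paper itself suggests.
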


\section{$\Q_p$, the Valued Field}
\label{qpmac}

Let $\mathcal{K}$ be a $P$-minimal field, taken as a structure in Macintyre's language, which consists of the language of rings together with a symbol to define the valuation and a unary relation $P_n$ for each $n \geq 2$, interpreted so that $P_n(x)\iff \E y,y^n=x$. While the symbol to define the valuation can be chosen either to be a unary predicate defining the valuation ring or a binary relation $\mid$ interpreted so that $x|y\iff v(x)\leq v(y)$, we will refer directly to the valuation $v$ for legibility. The symbols $P_n$ are included so that this structure has quantifier-elimination \cite{mac}. Furthermore, assume that $\mathcal{K}$ has definable Skolem functions. (This assumption is only required to invoke the cell decomposition seen at equation 7.5 from \cite{vcdensityi}. The existence of this cell decomposition is shown to be equivalent to definable Skolem functions in \cite{skolem}.)

\begin{thm}\label{qpmacthm}
	Let $\Phi$ be a finite set of formulas of the form $\varphi(x;y)$. Then $\Phi$ admits a distal cell decomposition with exponent $3\abs{x}-2$.
\end{thm}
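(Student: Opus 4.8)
The plan is to reduce to the one-dimensional case and then invoke Theorem~\ref{dimensioninduction}. Concretely, I would prove: whenever $\abs{x}=1$, every finite $\Phi(x;y)$ admits a distal cell decomposition that simultaneously has $3$ parameters and exponent $1$, i.e.\ with $\bigO(\abs{B})$ cells. Applying Theorem~\ref{dimensioninduction} with $d_0=1$, $r=1$, $k=3$ then yields distal cell decompositions of exponent $3(\abs{x}-1)+1=3\abs{x}-2$ in every dimension, so all the content is the one-dimensional construction.

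For the one-dimensional step I would proceed as in the weakly $o$-minimal case (Theorem~\ref{ominthm}), replacing intervals by $p$-adic cells. First, by Macintyre's quantifier elimination \cite{mac} each $\varphi\in\Phi$ is a boolean combination of valuation inequalities $v(f(x,y))\,\square\,v(g(x,y))$ and power-predicate conditions $P_n(h(x,y))$; by Lemma~\ref{bool_comb} we may pass to such a basic $\Phi$. Using the definable Skolem functions (which is exactly what makes the cell decomposition at equation~7.5 of \cite{vcdensityi} available; see \cite{skolem}) I would fix a uniform cell decomposition: finitely many $\emptyset$-definable center functions $c_i(y)$ and radius functions $a_j(y)$, an integer $n$, and a finite set $\Lambda$ of coset representatives, so that every $\varphi(x;b)$ is a union of boundedly many cells of the form $\{x : v(a_j(b))\,\square_1\,v(x-c_i(b))\,\square_2\,v(a_k(b)),\ x-c_i(b)\in\lambda P_n\}$. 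The potential cells of $\mathcal{T}$ would then be the singletons $\{c_i(b_1)\}$ together with the "annulus--coset" sets $\Delta$ determined by a center $t=c_i(b_1)$, a lower and an upper valuative radius read off from $b_2$ and $b_3$ (e.g.\ as $v(t-c_j(b_2))$ and $v(t-c_k(b_3))$), and a coset $\lambda\in\Lambda$ — three parameters $b_1,b_2,b_3$. For such a $\Delta$, $\mathcal{I}(\Delta)$ would collect those $b$ such that some $\varphi(x;b)$ crosses $\Delta$, together with the $b$ witnessing that $\Delta$ fails to be a genuine maximal set of its form, so that the included $\Delta$'s are precisely the "subintervals" (in the sense of Definition~\ref{subinterval}) of the Macintyre structure, each carrying a fixed subinterval type. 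One then checks: \emph{covering} — every point lies in an uncrossed annulus--coset around the relevant center at the finest relevant scale, with the $\bigO(\abs{B})$ exceptional points (the cell centers $c_i(b)$) absorbed by the singleton cells; \emph{not crossed} — immediate from the cell-decomposition property once $\Delta$ is a genuine subinterval of fixed type; and the \emph{count} $\bigO(\abs{B})$ — the $p$-adic content of the argument, coming from the forest structure of the relevant balls under the ultrametric together with the boundedly many cosets and subinterval types, exactly as in \cite[Lemma~4.2.4]{bobkov}.

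The hard part is entirely in dimension one and is twofold. First, keeping the parameter count at $3$: in $\Q_p$ a one-variable cell is naturally an annulus with several sub-balls removed (a Swiss cheese), and if those removed balls were encoded in the cell-defining formula the parameter count — hence the exponent produced by Theorem~\ref{dimensioninduction} — would blow up. The fix is to push the Swiss-cheese data into the inclusion predicate $\mathcal{I}(\Delta)$ and, where it must appear in the formula, to use the reduction to boundedly many removed balls per scale that is already carried out in Section~\ref{qpaff}. Second, the $\bigO(\abs{B})$ bound on the number of subintervals, which is the direct analogue of the combinatorics done for $\mathcal{L}_{\mathrm{aff}}$ in Section~\ref{qpaff}; it should transfer with only notational changes, since it depends only on the ultrametric structure of balls and not on which reduct of $\Q_p$ one works in. Once the one-dimensional decomposition is in place, Theorem~\ref{dimensioninduction} gives the stated bound with no further work.
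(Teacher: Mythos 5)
Your architecture is the paper's: reduce to $\abs{x}=1$, build there a decomposition with $3$ parameters and exponent $1$ from subintervals (atoms of the boolean algebra generated by the special balls) and subinterval types, push the ``is this a genuine subinterval'' data into $\mathcal{I}(\Delta)$, count via the ultrametric forest argument, and finish with Theorem~\ref{dimensioninduction} ($d_0=1$, $r=1$, $k=3$). But two points in your one-dimensional sketch are exactly where the work lies, and as written they do not go through. First, the cells cannot be just ``annulus--coset'' sets $\alpha_L \,\square_1\, v(x-t)\,\square_2\,\alpha_U \wedge P_n(\lambda(x-t))$ plus singletons at the centers. The coset of $x-t$ controls the formulas $P_n(\lambda(x-c(b)))$ only when $v(x-t)$ stays more than $2v(n)$ away from both radii: that is the regime where Lemma~\ref{subintervaltypelemma} applies, using that $v(t-c(b))\leq \alpha_L$ or $\geq \alpha_U$ for every center $c(b)$ with $b\in B$. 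At the boundary scales --- in particular on the sphere $v(x-t)=\alpha_U$, which is an infinite set, not a handful of exceptional centers absorbable by singleton cells --- an annulus--coset set is genuinely crossed by formulas $P_n(\lambda(x-c(b)))$ with $c(b)$ at valuative distance $\alpha_U$ from $t$. This is why the paper introduces the second kind of subinterval type: small balls $B_{r+2v(n)}(q)$ with $r$ within $2v(n)$ of $\alpha_L$ or $\alpha_U$ and $q=p^r q_0+t$ for $q_0$ in a fixed finite set, still definable from $(t,\alpha_L,\alpha_U)$ alone; and the non-crossing for these is a separate case of the same Hensel-type lemma, not something ``immediate from the cell-decomposition property.''

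Second, your fallback of letting the removed balls appear in the cell-defining formula via the Section~\ref{qpaff} reduction (``boundedly many removed balls per scale'') would defeat the purpose: that encoding costs up to $p-1$ additional parameters per cell, so Theorem~\ref{dimensioninduction} would yield an exponent of roughly $(p+2)(\abs{x}-1)+1$ rather than $3\abs{x}-2$. The paper avoids it entirely: every type-cell is defined from $(t,\alpha_L,\alpha_U)$ only --- the first-kind cells automatically miss the removed balls because the condition $v(x-t)<\alpha_U-2v(n)$ keeps them away from the scale $\alpha_U$ --- and $\mathcal{I}(\Delta)$ is defined to test that no ball of $\mathcal{B}$ lies strictly between $B_{\alpha_L}(t)$ and $B_{\alpha_U}(t)$, i.e.\ that $I(t,\alpha_L,\alpha_U)$ really is a subinterval (split into the $\theta_{f,c}$ and $\theta_c$ cases according to whether the offending ball comes from $\mathcal{B}_F$ or $\mathcal{B}_C$). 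With these two repairs your plan coincides with the paper's proof.
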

\begin{proof}
	This follows from Lemma \ref{qpmaclem} below, together with Theorem \ref{dimensioninduction}.
\end{proof}

\begin{lem}\label{qpmaclem}
	If $\abs{x}=1$, then $\Phi$ admits a distal cell decomposition $\mathcal{T}$ with 3 parameters and exponent 1.
\end{lem}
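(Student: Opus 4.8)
The plan is to reduce, via quantifier elimination in Macintyre's language, to a finite set $\Phi$ consisting of formulas whose instances $\varphi(\Q_p;b)$ are (boolean combinations of) ``Swiss cheeses'' — balls with finitely many sub-balls removed, possibly intersected with cosets of the $P_n$-predicates. Concretely, for $\abs{x}=1$ every definable subset of $\mathcal{K}$ is a finite boolean combination of sets of the form $\{x : v(x-c) \,\square\, \gamma\}$, sets $\{x: v(x-c_1)\,\square\, v(x-c_2)\}$, and sets $\{x : x - c \in \lambda P_n\}$; by Lemma \ref{bool_comb} it suffices to treat the generating family $\Phi'$ of these ``basic'' formulas, where the relevant centers are among the terms $c_i(y)$ and the relevant radii are valuations $v(c_i(y)-c_j(y))$, exactly as in the $\mathcal{L}_{\mathrm{aff}}$ case of Section \ref{qpaff}. (Invoking definable Skolem functions / the cell decomposition of \cite{vcdensityi}, equation 7.5, gives this uniformly with all centers and radii coming from at most the parameters $y$.)

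Next I would define the cells. Fix a finite $B \subseteq \Q_p^{\abs{y}}$ and let $T = \{c_i(b): i \in I, b \in B\}$ be the set of relevant centers. The potential cells are the atoms of the boolean algebra generated by the balls $B_{v(c_i(b_1)-c_j(b_2))}(c_i(b_1))$ together with the $P_n$-cosets $c_i(b) + \lambda P_n$; as in Bobkov's argument one shows each such atom (``subinterval'', with a chosen subinterval type recording the $P_n$-coset information) is uniformly definable from at most a bounded number of parameters $\bar b$ from $B$ — here the key point is that a ball $B_{\alpha_U-1}(t)$ splits into $p$ balls of the next radius, so only $p-1$ removed sub-balls need to be named, and the $P_n$-type is determined by one more parameter. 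Thus one writes down a finite set $\Psi(x; y_1, \dots, y_k)$ of defining formulas with $k=3$ (one parameter for the center $t = c_i(y_0)$, one for the outer radius $\alpha_L = \alpha_1(y_0,y_1)$, and one, after absorbing the bounded list of removed-subball parameters into a single tuple, for the inner radius / type data $\alpha_U = \alpha_2(y_0,y_2)$ — keeping $k=3$ requires the same ``at most $p-1$ sub-balls'' bookkeeping trick used in Section \ref{qpaff}). For each potential cell $\Delta$ one sets $\mathcal{I}(\Delta)$ to be the $\emptyset$-definable (from the parameters of $\Delta$) set of $b$ witnessing that $\Delta$ fails to be an honest subinterval — i.e. $b$ such that some ball in $\mathcal{B}$ or some $P_n$-coset crosses $\Delta$ — so that $\Delta \in \mathcal{T}(B)$ iff $B \cap \mathcal{I}(\Delta) = \emptyset$.

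Then I would verify the two defining properties of a distal cell decomposition. Covering: every $a \in \Q_p$ lies in some atom of the boolean algebra, and that atom is a genuine subinterval (not crossed by anything in $\mathcal{B}$), hence is a cell. Not crossed by $\Phi(x;B)$: since each $\varphi(x;b)$ for $\varphi \in \Phi'$, $b \in B$ is by construction a boolean combination of the balls and $P_n$-cosets generating the algebra, and every atom has a fixed truth value on each of these generators, every cell is uncrossed; Lemma \ref{bool_comb} then transfers this to the original $\Phi$. Finally, counting: $\abs{\mathcal{B}} = \bigO(\abs{B}^2)$ balls cut $\Q_p$ into $\bigO(\abs{B}^2)$ intervals, but each interval carries only boundedly many subinterval types (a constant $K = K(n,p)$, independent of $B$), so $\abs{\mathcal{T}(B)} = \bigO(\abs{B}^2)$ — wait, this would give exponent $2$; the honest count, tracking that the removed-subball refinement can multiply the interval count by another factor of $\abs{B}$ before the type data is added, gives $\abs{\mathcal{T}(B)} = \bigO(\abs{B})$ once one observes (as in \cite{bobkov}, Lemma 4.2.4) that the number of genuine subintervals is actually $\bigO(\abs{B})$, not $\bigO(\abs{B}^2)$, because each subinterval is determined by a single center together with a pair of radii drawn from the $\bigO(\abs{B})$-sized set of relevant valuations around that center. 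This linear count is exactly the content we need.

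The main obstacle, as in the $\mathcal{L}_{\mathrm{aff}}$ case, is the bookkeeping that keeps the parameter count at $k=3$ rather than letting it blow up: naively a subinterval $B_{\alpha_L}(t) \setminus \bigcup_{t' \in T \cap B_{\alpha_U-1}(t)} B_{\alpha_U}(t')$ depends on all of $B$, and one must argue that only $p-1$ of the removed sub-balls matter and that they, together with the $P_n$-coset (``subinterval type'') data, can be packaged into the third parameter slot; getting a clean uniform family $\Psi(x;y_1,y_2,y_3)$ out of this — and checking that $\mathcal{I}(\Delta)$ really is definable from just $\Delta$'s own parameters — is the delicate step. The $P_n$-predicates are the new feature relative to $\mathcal{L}_{\mathrm{aff}}$'s $Q_{m,n}$, but they behave the same way (finite-index multiplicative cosets), so no essentially new difficulty arises there.
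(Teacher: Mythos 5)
Your overall strategy (reduce via the one-variable cell decomposition to basic formulas, pass to a family of balls, take subintervals plus ``subinterval type'' data for the $P_n$-conditions, and count linearly) is the same as the paper's, but the step you yourself flag as delicate --- keeping the parameter count at $3$ --- is where your proposal genuinely breaks down. In a distal cell decomposition (Definition \ref{def_dcd}) each cell is an instance $\psi(M^{\abs{x}};b_1,\dots,b_k)$ with $b_1,\dots,b_k$ \emph{individual elements of $B$}, one per slot of sort $M^{\abs{y}}$; you cannot ``absorb the bounded list of removed-subball parameters into a single tuple.'' The Section \ref{qpaff} bookkeeping you invoke genuinely costs up to $p-1$ extra parameters (each removed sub-ball is named by its own element of $B$), so your construction yields $k$ on the order of $p+2$, not $3$ --- and since Lemma \ref{qpmaclem} feeds $k=3$ into Theorem \ref{dimensioninduction} to get the exponent $3\abs{x}-2$, this is not cosmetic. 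The paper explicitly notes that the Section \ref{qpaff} argument ``may need more than three parameters'' and does something different: the cells are not ``subinterval $\cap$ type'' defined via the removed balls, but the subinterval-type sets themselves, definable from $(t,\alpha_L,\alpha_U)$ alone (hence from three elements $b_0,b_1,b_2\in B$ through $\emptyset$-definable functions), with the second-kind types written as balls $B_{r+2v(n)}(p^r q_0+t)$ for $q_0$ in a fixed finite set of constants; the removed sub-balls never appear in the cell-defining formulas at all. Whether the triple $(t,\alpha_L,\alpha_U)$ actually delimits a genuine subinterval is checked entirely by $\mathcal{I}(\Delta)$, via formulas $\theta_{f,c}$ and $\theta_c$ in a single extra variable $y$ detecting a ball of $\mathcal{B}$ strictly between $B_{\alpha_U}(t)$ and $B_{\alpha_L}(t)$.

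Two secondary gaps: first, your generating family of balls only has radii $v(c_i(b_1)-c_j(b_2))$, but the basic formulas $v(f(y))<v(x-c(y))$ force you to include the balls $B_{v(f(b))}(c(b))$ (the paper's $\mathcal{B}_F$) as well, otherwise your atoms are crossed by $\Phi(x;B)$; relatedly, throwing the cosets $c_i(b)+\lambda P_n$ into the boolean algebra and taking atoms neither stays $\bigO(\abs{B})$ nor stays $3$-parameter definable --- the paper instead shows (via the Hensel-type Lemma \ref{subintervaltypelemma}) that the $P_n$-conditions are constant on each subinterval type, so no refinement by cosets is needed. Second, your count is muddled: the linear bound does not come from ``a pair of radii drawn from an $\bigO(\abs{B})$-sized set'' (that reasoning gives $\bigO(\abs{B}^2)$ or worse); it comes from $\abs{\mathcal{B}_C}=\bigO(\abs{B})$, which is the nontrivial content of \cite[Lemma 7.3]{vcdensityi}, combined with the acyclicity/Hasse-diagram argument (the analogue of \cite[Lemma 4.2.4]{bobkov}) showing that $\bigO(\abs{B})$ nested-or-disjoint balls produce only $\bigO(\abs{B})$ subintervals, each carrying boundedly many types.
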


In the rest of this section, we prove Lemma \ref{qpmaclem}.

\subsection{Simplification of $\Phi$}
To construct our distal cell decomposition, we start with a simpler notion of cell decomposition. Each formula $\varphi(x;y)$ with $\abs{x} = 1$, and thus every $\varphi \in \Phi$, has a cell decomposition in the sense that $\varphi(x;y)$ is equivalent to the disjoint disjunction of the formulas $\varphi_i(x;y):1\leq i \leq N$, each of the form 
$$v(f(y))\square_{1}v(x-c(y))\square_{2}v(g(y))\wedge P_n(\lambda(x-c(y)))$$
for some $n,N >0$, where $\square_{1}$ is $<$ or no condition, $\square_{2}$ is $\leq$ or no condition, $f,g,c$ are $\emptyset$-definable functions, and $\lambda \in \Lambda$, a finite set of representatives of the cosets of $P_n^\times$. By Hensel's Lemma, we can choose $\Lambda \subset \Z \subseteq \mathrm{dcl}(\emptyset)$, so that each cell is $\emptyset$-definable \cite{mac}. Let $F$ be the set of all functions appearing as $f,g$ in these formulas, and $C$ the set of all functions appearing as $c$ (See equation 7.5, \cite{vcdensityi}).

Now we define $\Phi_{F,C,\Lambda}(x;y)$ as the set of formulas $\{v(f(y))<v(x-c(y)):f \in F,c \in C\}\cup\{P_n(\lambda(x-c(y))):c \in C,\lambda \in \Lambda\}$. It is easy to check that every formula of $\Phi$ is a boolean combination of formulas in $\Phi_{F,C,\Lambda}$, so a distal cell decomposition for $\Phi_{F,C,\Lambda}$ will also be a distal cell decomposition for $\Phi$. Thus we may assume that $\Phi$ is already of the form $\Phi_{F,C,\Lambda}$. For additional ease of notation, we also assume $F$ contains the constant function $f_0:y\mapsto 0$.

\subsection{Subintervals and subinterval types}
Let $B_r(c)$ denote again the open ball centered at $c$ with radius $r$: $B_r(c)=\{x\in K:v(x-c)> r\}$. Fix a finite set $B \subset M^{\abs{y}}$, and let $\mathcal{B}$ be a set of balls, similar to those referred to in \cite{vcdensityi}, Section 7.2 as ``special balls defined over $B$'', which we express as $\mathcal{B} := \mathcal{B}_F\cup \mathcal{B}_C$, where
$$\mathcal{B}_F=\{B_{v(f(b))}(c(b)):b \in B,f \in F,c \in C\}$$
and
$$\mathcal{B}_C=\{B_{v(c_1(b_1)-c_2(b_2))}(c_1(b_1)):b_1,b_2\in B,c_1,c_2 \in C\}.$$

Clearly $\abs{\mathcal{B}_F}=\bigO(\abs{B})$. It is less clear that $\abs{\mathcal{B}_C}=\bigO(\abs{B})$, but this is a consequence of \cite[Lemma 7.3]{vcdensityi}. Thus $\abs{\mathcal{B}}=\bigO(\abs{B})$.

\begin{defn}
We now define \emph{subintervals} and surrounding notation, analogously to Definition \ref{subinterval}, but with a different notion of subinterval types.
\begin{itemize}
 	\item Define a \emph{subinterval} as an atom in the boolean algebra generated by $\mathcal{B}$.
 	\item Each subinterval can be expressed as $I(t,\alpha_L,\alpha_U)$ where
$$I(t,\alpha_L,\alpha_U)=B_{\alpha_L}(t)\setminus \bigcup_{t' \in T\cap B_{\alpha_U-1}(t)} B_{\alpha_U}(t'),$$
for some $t=c_i(b_0)$ with $i \in I, b_0 \in B$, and $\alpha_L=\alpha_1(b_0,b_1),\alpha_U=\alpha_2(b_0,b_2)$, with $\alpha_1,\alpha_2$ chosen from a finite set $A$ of $\emptyset$-definable functions $\Q_p^2 \to \Gamma$, including two functions defined, by abuse of notation, as $\pm \infty$.
	\item The subinterval $I(t,\alpha_L,\alpha_U)$ is said to be \emph{centered} at $t$.
	\item For $a \in \Q_p$, define $T-\mathrm{val}(a):= v(a-t)$, where $a$ belongs to a subinterval centered at $t$. As in Definition \ref{Tval}, this is well-defined.
	\item Given a subinterval $I(t, \alpha_L, \alpha_U)$, two points $a_1,a_2$ in that subinterval are defined to have the same \emph{subinterval type} if one of the following conditions is satisfied:
\begin{enumerate}
	\item $\alpha_L+2v(n)< T\mathrm{-val}(a_i) < \alpha_U-2v(n)$ for $i=1,2$ and $(a_1-t)(a_2-t)^{-1}\in P_n^\times$
	\item $\neg(\alpha_L+2v(n) < T\mathrm{-val}(a_i)<\alpha_U-2v(n))$ for $i=1,2$ and $T\mathrm{-val}(a_1)=T\mathrm{-val}(a_2)< v(a_1-a_2)-2v(n)$
\end{enumerate}
\end{itemize}
\end{defn}
We will construct a distal cell decomposition $\mathcal{T}(B)$ where each cell consists of all points in a fixed subinterval with a fixed subinterval type. There are several requirements to check for this:
\begin{enumerate}
	\item The sets of points in a fixed subinterval with a fixed subinterval type are uniformly definable from three parameters in $B$.
	\item If two points lie in the same subinterval and have the same subinterval type, then they have the same $\Phi$-type over $B$.
	\item $K$ has $\bigO(\abs{B})$ subintervals, and each divides into a constant number of subinterval types.
\end{enumerate}
The first and second requirements will verify that this is a valid distal cell decomposition. The third will verify that $\abs{\mathcal{T}(B)} \leq \bigO(\abs{B})$, and thus that $\mathcal{T}$ has exponent 1. The first will guarantee that $\mathcal{T}$ uses only three parameters.

First we check the first requirement. We see that the triple $(t,\alpha_L,\alpha_U)$ can always be defined from a triple $(b_0,b_1,b_2) \in B^3$, so it suffices to show that each cell (subinterval type) in the subinterval $I(t,\alpha_L,\alpha_U)$ can be defined from $(t,\alpha_L,\alpha_U)$ and no other parameters in $B$. Note that while in Section \ref{qpaff}, we showed that the subintervals are uniformly definable, and the same argument would hold here, the defining formulas there may need more than three parameters, so we give a different argument.

A subinterval type of the first kind can be defined from $t,\alpha_L,\alpha_U$ by $\psi_\lambda(t,\alpha_L,\alpha_U):=\alpha_L + 2v(n)<v(x-t)<\alpha_U - 2v(n) \wedge P_n(\lambda(x-t))$. A subinterval type of the second kind is just a ball, of the form $B_{r+2v(n)}(q)$, where either $r = \alpha_L + i$ with $0<i\leq 2v(n)$, or $r = \alpha_U - i$, with $0 \leq i \leq 2v(n)$, and $q$ satisfies $T-\mathrm{val}(q)=r$, which is implied by $v(t-q)= r$. For a fixed $t,\alpha_L,\alpha_U$, there are a constant number of choices for $r$, and $q$ can be chosen to be $p^r (q_0)+t$, where $q_0$ is chosen from a set $Q$ of representatives for open balls of radius $2v(n)$ such that $v(q_0)=0$.

Given a potential cell $\Delta$ which represents a subinterval type within the set $I(t,\alpha_L, \alpha_U)$, we want to define $\mathcal{I}(\Delta)$ so that $\mathcal{I}(\Delta) \cap B = \emptyset$ if and only if there actualy is a subinterval $I(t,\alpha_L,\alpha_U)$. There is such an interval if and only if there are no balls in $\mathcal{B}$ strictly containing $B_{\alpha_U}(t)$ and strictly contained in $B_{\alpha_L}(t)$. A ball $B_{v(f(b)}(c(b)) \in \mathcal{B}_F$ for some $b \in B, f \in F, c \in C$ lies between those two balls if and only if $\alpha_L < v(f(b)) < \alpha_U$ and $v(f(b)) < v(t - c(b))$, so define
$$\theta_{f,c}(y;t,\alpha_L,\alpha_U) := \alpha_L < v(f(y)) < \alpha_U \wedge v(f(b)) < v(t - c(b)).$$
A ball $B_{v(c_1(b_1)-c_2(b_2))}(c_1(b_1)) \in \mathcal{B}_C$ for some $b_1, b_2 \in B, c_1, c_2 \in C$ lies between those two balls if and only if $\alpha_L < v(c_1(b_1)-c_2(b_2)) < \alpha_U$ and $v(c_1(b_1)-c_2(b_2)) < v(t - c_1(b_1))$. If this is true, then $B_{v(c_1(b_1)-c_2(b_2))}(c_1(b_1)) = B_{v(t-c_2(b_2))}(t)$, so it is enough to check if there is a ball $B_{v(t - c(b))}(t)$ that lies between those two balls. That happens if and only if $\alpha_L < v(t - c(b)) < \alpha_U$, so define
$$\theta_{c}(y;t,\alpha_L,\alpha_U) := \alpha_L < v(t - c(y)) < \alpha_U.$$ Then $\mathcal{I}(\Delta)$ is defined by the formula $$\bigvee_{c \in C}\left(\theta_c(y;t,\alpha_L,\alpha_U) \vee \left(\bigvee_{f \in F}\theta_{f, c}(y;t,\alpha_L,\alpha_U)\right)\right)$$
as desired.

Now we will check the third requirement. Ordering the balls of $\mathcal{B}$ by inclusion forms a poset, whose Hasse diagram can be interpreted as a graph. By the ultrametric property, any two intersecting balls are comparable in this ordering, which rules out cycles in the graph. As the number of vertices is $\abs{\mathcal{B}}=\bigO(\abs{B})$ and the graph is acyclic, the number of edges is also $\bigO(\abs{B})$. There are also $\bigO(\abs{B})$ subintervals, because there is (almost) a surjection from edges of the graph to subintervals: given an edge between $B_1$ and $B_2$, assuming without loss of generality that $B_2 \subsetneq B_1$, we can assign it to the subinterval $I(t, \alpha_L, \alpha_U),$ where $t \in B_2$, $\alpha_L$ is the radius of $B_1$, and $\alpha_U$ is the radius of $B_2$. This omits the subintervals with outer ball $K$, and the subintervals representing minimal balls in $\mathcal{B}$, but there are $\bigO(\abs{B})$ of those as well.

Now we will check that each subinterval breaks into only a constant number of subinterval types. Fix a subinterval $I(t, \alpha_L,\alpha_U)$. Then the subinterval types of the first kind correspond with cosets of $P_n^\times$, of which there are $n$ (or $n+1$ if one takes into account the fact that 0 is not in the multiplicative group at all). As in Section \ref{qpaff}, or \cite[Lemma 4.2.11]{bobkov}, there will also be a constant number of subinterval types of the second kind. We have seen that these can be defined as $B_{r+2v(n)}(q)$. For our fixed $(t,\alpha_L,\alpha_U)$, $r$ must be either $\alpha_L + i$ with $0<i \leq 2v(n)$ or $\alpha_U - i$ with $0 \leq i \leq 2v(n)$, which leaves only finitely many choices. For a fixed $r$, $q$ must be of the form $p^r(q_0)+t$, where $q_0$ is chosen from a fixed finite set, so there are $\abs{Q}$ choices of $q$.

Now we check the second requirement.
Let $\varphi \in \Phi, b \in B$. Then $\varphi(x;b)$ is either of the form $v(f(b))<v(x - c(b))$ for $f \in F, c \in C$ or $P_n(\lambda(x-c(b)))$ for $c \in C, \lambda \in \Lambda$.

If $\varphi(x;b)$ is $v(f(b))<v(x - c(b))$, then the set of points satisfying $\varphi(x;b)$ is a ball in $\mathcal{B}$, so a subinterval, as an atom in the boolean algebra generated by $\mathcal{B}$, is not crossed by that ball, or the formula $v(f(b))<v(x-c(b))$. Thus each cell of $\mathcal{T}(B)$, being a subset of a subinterval, is not crossed by $\varphi(x;b)$.

Now it suffices to check that each cell is not crossed by $\varphi(x;b)$, where $\varphi(x;b)$ is $P_n(\lambda(x-c(b)))$ for $c \in C, \lambda \in \Lambda$. To do this, we will need the following lemma:
\begin{lem} [7.4 in \cite{vcdensityi}] \label{subintervaltypelemma}
	Suppose $n > 1$, and let $x, y, a \in K$ with $v(y - x) > 2v(n) + v(y - a)$. Then $(x - a)(y - a)^{-1} \in P_n^\times$.
\end{lem}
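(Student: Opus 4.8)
The plan is to reduce the statement to a single application of Hensel's lemma. Set $u := (x-a)(y-a)^{-1}$. The hypothesis forces $y \neq a$ (if $y = a$ then $v(y-a) = \infty$ and no value is strictly larger), so $u$ is well-defined, and from $x - a = (y-a) - (y-x)$ we get
\[
u = 1 - w, \qquad w := (y-x)(y-a)^{-1}, \qquad v(w) = v(y-x) - v(y-a) > 2v(n).
\]
Since $n$ is a positive integer we have $v(n) \geq 0$, so $v(w) > 0$; hence $v(u) = v(1-w) = 0$, and in particular $u \neq 0$. It therefore suffices to exhibit $u$ as an $n$-th power in $K$.

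To do this, consider $f(Z) := Z^n - u$ together with the approximate root $Z_0 = 1$. Then $f(1) = w$ and $f'(1) = n$, so $v(f(1)) = v(w) > 2v(n) = 2v(f'(1))$. As $\mathcal{K}$ is a $P$-minimal field, $K$ is Henselian, so the quantitative form of Hensel's lemma yields some $Z_1 \in K$ with $f(Z_1) = 0$ and $v(Z_1 - 1) = v(w) - v(n) > 0$; in particular $Z_1^n = u$. Thus $u \in P_n^\times$, which is precisely the claim.

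I do not expect a genuine obstacle here; the only points needing care are the valuation bookkeeping that verifies the hypothesis $v(f(Z_0)) > 2v(f'(Z_0))$ of the strong Hensel's lemma, and the remark that $v(n) \geq 0$ makes $u$ a unit, so that the conclusion lands in $P_n^\times$ rather than merely in the set of all $n$-th powers. If one prefers to avoid citing Hensel's lemma directly, the same fact can be extracted from the classical description of $n$-th powers among the $1$-units of $\Q_p$ (and its finite and elementary extensions): every element of the form $1 + z$ with $v(z) > 2v(n)$ is an $n$-th power.
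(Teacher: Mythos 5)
Your argument is correct: writing $(x-a)(y-a)^{-1} = 1 - w$ with $v(w) = v(y-x) - v(y-a) > 2v(n)$ and applying the quantitative Hensel's lemma to $Z^n - u$ at $Z_0 = 1$ (legitimate since $P$-minimal fields expand $p$-adically closed, hence Henselian, fields of characteristic $0$) does yield a nonzero $n$-th power; the only cosmetic slip is that Hensel gives $v(Z_1 - 1) \geq v(w) - v(n)$ rather than equality, which is immaterial. The paper itself offers no proof --- it quotes this statement verbatim as Lemma 7.4 of the cited reference --- and your Hensel-type argument is essentially the standard proof given there, so you have in effect reproved the cited fact rather than found a different route.
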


We will show that any two points $a_1, a_2$ in a given subinterval $I(t,\alpha_L, \alpha_U)$ with a given subinterval type satisfy $(a_1-c(b))(a_2-c(b))^{-1} \in P_n^\times$. This shows that $\mathcal{K}\models P^n(\lambda(a_1 - c(b))) \iff P^n(\lambda(a_2 - c(b)))$, so the cell defined by points in that subinterval with that subinterval type is not crossed by $\varphi(x;b)$.

We will do casework on the two kinds of subinterval types, but for both we use the fact that the definition of $I(t,\alpha_L,\alpha_U)$ implies that either $v(t-c(b))\leq \alpha_L$, or $v(t-c(b))\geq\alpha_U$.

In the first kind of subinterval type, we have $(a_1-t)(a_2-t)^{-1} \in P_n^\times$ by definition, so it suffices to show, without loss of generality, that $(t-a_1)(c(b)-a_1)^{-1} \in P_n^\times$. Lemma \ref{subintervaltypelemma} shows that this follows from $v(t-c(b))> 2v(n)+v(t-a_1)$. As $T-\mathrm{val}(a_1) = v(t-a_1)$, this is equivalent to $v(t-c(b))\geq \alpha_U$. By the construction of $I(t,\alpha_L,\alpha_U)$, this is one of two cases, and we are left with the case $v(t-c(b))\leq \alpha_L$. In that case, $v(t-c(b))+2v(n)<v(t-a_1)$. Thus $(a_1 - c(b))(t-c(b))^{-1} \in P_n^\times$, and similarly, $(a_2-c(b))(t-c(b))^{-1} \in P_n^\times$, so we get $(a_1-c(b))(a_2-c(b))^{-1}\in P_n^\times$. 

In the second kind of subinterval type, we have $v(a_1-t)=v(a_2-t)<v(a_1-a_2)-2v(n)$. If $v(t-c(b))\geq \alpha_U$, then as $a_1 \in I(t,\alpha_L,\alpha_U)$, we have $\alpha_L < v(a_1 -t)\leq \alpha_U$, we have $v(a_1-c(b))= v(a_1-t)$ by the ultrametric property. Thus $v(a_1-c(b))+2v(n)<v(a_1-a_2)$, so by Lemma \ref{subintervaltypelemma}, $(a_1-c(b))(a_2-c(b))^{-1} \in P_n^\times$. In the other case, $v(t-c(b))\leq \alpha_L< v(a_1-t)$, so the lemma tells us that $v(a_1-c(b)) = v(t-c(b))<v(a_1-t)-2v(n)$, so by the lemma, $v(a_1-c(b))(a_1-t)^{-1} \in P_n^\times$, and also $v(a_2-c(b))(a_2-t)^{-1} \in P_n^\times$, so as also $v(a_1-t)+2v(n)<v(a_1-v_2)$, so $(a_1-t)(a_2-t)^{-1} \in P_n^\times$, so we can combine all these facts to get $(a_1-c(b))(a_2-c(b))^{-1} \in P_n^\times$.

\section{Zarankiewicz's Problem}\label{zarsec}
In this section, we introduce background on Zarankiewicz's problem, and the bounds known for the case of distal-definable bipartite graphs in general.
We then combine these general bounds with the bounds on distal cell decompositions throughout in this paper, arriving at concrete combinatorial corollaries for the distal structures we have discussed.

\subsection{Background}\label{zar_background}
First we will want to define the notion of a bigraph.
A bigraph consists of a pair of sets $X,Y$ and a relation $E \subset X \times Y$ such that $E$ is a bipartite graph with parts $X$ and $Y$.
We say that such a bigraph \emph{contains} a $K_{s,u}$ if there is a subset $A \subset X$ with $\abs{A}=s$ and a subset $B \subset Y$ with $\abs{B}=t$ such that $E$ restricted to $A \times B$ is a complete bipartite graph (isomorphic to $K_{s,u}$).

Zarankiewicz's problem asks to bound asymptotically in $m$ and $n$ the number of edges in the largest bipartite graph on $m \times n$ omitting the subgraph $K_{s,t}$.
Better bounds are known when we fix a particular infinite bigraph $E$ omitting some $K_{s,t}$, and bound the size of the largest subgraph with parts of size $m,n$ respectively.
If $P,Q$ are subsets of the parts of $E$, then we write $E(P,Q)$ to denote the set of edges between $P,Q$, so we concern ourselves with bounding $|E(P,Q)|$ in terms of $|P|$ and $|Q|$.
This applies easily to problems in incidence geometry - if $\Gamma$ is a family of curves on $\R^n$,
we may consider an incidence graph on parts $\R^n$ and $\Gamma$ defined by placing an edge between $(p,\gamma)$ exactly when $p \in \gamma$.
When these curves are algebraic of bounded degree, B\'ezout's theorem bounds the size of a complete bipartite subgraph $K_{s,t}$ in this incidence graph,
and then we are interested in the number of edges (incidences) between a finite set of points and a finite set of curves.
For a general reference on incidence geometry, see \cite{sheffer_book}.

We will concern ourselves with the case where the bigraph is definable in a distal structure.
In the incidence example, this happens when the curves in $\Gamma$ are uniformly definable in some distal structure on $\R$.
In \cite{cgs}, the authors set an upper bound for Zarankiewicz's problem in bigraphs definable in a distal structure,
using distal cell decompositions as the foundation of their approach.
The resulting bound depends essentially on the distal density of the definable graph - this is our primary motivation for defining distal density and distal exponents in this paper.

The approach of \cite{cgs} follows a classic divide-and-conquer argument used in \cite[Section 4.5]{matousek_GTM} to prove the Szemer\'edi-Trotter theorem,
which states that if we let $\Gamma$ be the set of lines in $\R^2$, then
$$|E(P,Q)| = \bigO\left(|P|^{2/3}|Q|^{2/3} + |P| + |Q|\right).$$
This is proven using cuttings:
\begin{defn}
	Let $\mathcal{F}$ be a finite family of subsets of a set $X$ with $|\mathcal{F}| = n$.
	Given a real $1 < r < n$, we say that a family $\mathcal{C}$ of subsets of $X$ is a $\frac{1}{r}$-\emph{cutting for} $\mathcal{F}$ when
	$\mathcal{C}$ forms a cover of $X$ and each set $C \in \mathcal{C}$ is crossed by at most $\frac{n}{r}$ elements of $\mathcal{F}$.
\end{defn}
Cuttings differ from abstract cell decompositions in that a limited amount of crossing is allowed, but they are still related.
In \cite[Section 6.5]{matousek_GTM}, a bound (\cite[Lemma 4.5.3]{matousek_GTM}) is given on the size of an $\frac{1}{r}$-cutting into triangles with respect to any finite set of lines.
For a given set of points and a given set of lines, a particular value of $r$ is chosen, an $\frac{1}{r}$-cutting is found,
and then for each triangle in the cutting, the set of incidences between points in the triangle and lines that cross the triangle is bounded.
These bounds are summed, and after considering some exceptional cases, this proves Szemer\'edi-Trotter.

In \cite{cgs}, meanwhile, the authors find uniformly definable cuttings for each definable relation, starting with a distal cell decomposition.
The size of the cutting given by this cutting lemma scales directly with the size of the given distal cell decomposition,
so the bounds on distal cell decompositions throughout this paper also function as bounds on the sizes of cuttings.
\begin{fact}[{Distal Cutting Lemma: \cite[Theorem 3.2]{cgs}}]
	Let $\phi(x;y)$ be a formula admiting a distal cell decomposition of exponent $d$.
	Then for any natural $n$ and any real $1 < r < n$, there exists $t = \bigO(r^d)$ such that for any finite $H \subseteq M^{|y|}$ of size $n$,
	there are uniformly definable sets $X_1, \dots, X_t \subseteq M^{|x|}$ which form an $\frac{1}{r}$-\emph{cutting for } $\{\phi(x;h): h \in H\}$.
\end{fact}
The proof of this also follows the proof of the cutting lemma for lines in \cite[Sections 4.6 and 6.5]{matousek_GTM},
which in turn uses the random sampling technique of Clarkson and Shor.\cite{clarkson_shor}.

From this cutting lemma, a similar divide-and-conquer argument works.
Given a formula $\phi(x;y)$ on a distal structure $M$ defining a bigraph $E$ on $M^{|x|} \times M^{|y|}$,
for any finite subset $H \subseteq M^{|y|}$,
the authors of \cite{cgs} use a distal cell decomposition and the distal cutting lemma to find a suitable cutting for $\{\phi(x;h): h \in H\}$.
They then, in summary, use other tools to bound the incidences between the points in each cell of the cutting and formulas $\phi(x;h)$ which cross it,
and combine these bounds to find a final result, quoted here in our terminology:
\begin{fact}[{\cite[Theorem 5.7]{cgs}}]\label{zarcgs}
	Let $\mathcal{M}$ be a structure and $d,t \in \N_{\geq 2}$. Assume that $E(x,y)\subseteq M^{\abs{x}}\times M^{\abs{y}}$ is a definable relation given by an instance of a formula $\theta(x,y;z)\in\mathcal{L}$, such that the formula $\theta'(x;y,z):=\theta(x,y;z)$ has a distal cell decomposition of exponent $t$, and such that the VC density of $\theta''(x,z;y):=\theta(x,y;z)$ is at most $d$. Then for any $k \in \N$ there is a constant $\alpha=\alpha(\theta,k)$ satisfying the following.

	For any finite $P\subseteq M^{\abs{x}},Q \subseteq M^{\abs{y}}$, $\abs{P}=m,\abs{Q}=n$, if $E(P,Q)$ is $K_{k,k}$-free, then we have:
	$$\abs{E(P,Q)}\leq \alpha\left(m^{\frac{(t-1)d}{td-1}}n^{\frac{t(d-1)}{td-1}}+m+n\right).$$
\end{fact}
While $d,t$ are assumed to be integral in their theorem statement, they could be replaced with any real $d,t \in \R_{\geq 2}$ and their proof would work unchanged. If $\theta'$ has distal density $t$, then it is not known if $\theta$ must have a distal cell decomposition of exponent precisely $t$. However, we can still get nearly the same bound, as for all $\varepsilon>0$, $\theta'$ has a distal cell decomposition with exponent $t+\varepsilon$. As $\lim_{\varepsilon \to 0}\frac{(t+\varepsilon-1)d}{(t+\varepsilon)d-1}=\frac{(t-1)d}{td-1}$, and $\frac{(t+\varepsilon)(d-1)}{(t+\varepsilon)d-1}\leq \frac{t(d-1)}{td-1}$, the theorem still holds for $\theta'$ with distal density $t$, except with the final bound replaced by
$$\abs{E(P,Q)}\leq \alpha\left(m^{\frac{(t-1)d}{td-1}+\varepsilon}n^{\frac{t(d-1)}{td-1}}+m+n\right)$$
for arbitrary $\varepsilon > 0$ and $\alpha = \alpha (\theta, k, \varepsilon)$.

Contrast this result to an analogous result for semi-algebraic sets, using polynomial partitioning for the divide-and-conquer argument instead of cuttings:
\begin{fact}[{\cite[Corollary 1.7]{walsh}}]\label{walsh}
	Let $P$ be a set of $m$ points and let $\mathcal{V}$ be a set of $n$ constant-degree algebraic varieties, both in $\R^d$, such that the incidence graph of $P\times \mathcal{V}$ does not contain $K_{s,t}$.  Then for every $\varepsilon >0$, we have
	$$I(P,\mathcal{V})=\bigO_{d,s,t,\varepsilon}\left(m^{\frac{(d-1)s}{ds-1}+\varepsilon}n^{\frac{d(s-1)}{ds-1}}+m+n\right).$$
\end{fact}
The initial version of this result, \cite[Theorem 1.2]{sheff}, had an extra factor of $m^\varepsilon$ in the first term.
The $m^\varepsilon$ was removed first in special cases, such as in \cite[Theorem 1.5]{basu_sombra}, with a more involved application of polynomial partitioning,
eventually leading to \cite{walsh}.
\begin{rmk}
	The special case of $d = s = 2$ is proven in \cite[Theorem 1.1]{sheff}, without the extra factor of $m^\varepsilon$, using the cutting lemma strategy generalized by \cite{cgs}.
	This method would imply the rest of Fact \ref{walsh} given a distal cell decomposition of exponent $\abs{x}$ for each finite set $\Phi(x;y)$ of formulas in the language of ordered rings over $\R$.
\end{rmk}

As a last remark before examining these combinatorial applications in specific structures, we mention some other combinatorial applications of distal cell decompositions which may be improved using specific bounds like those in this paper.
While the papers are different in strategy and scope, both \cite[Theorem 2.6]{basu_seh} and \cite[Theorem 1.9]{distal_reg} apply techniques that we now recognize as distal cell decompositions and distal cutting lemmas Ramsey-theoretically,
showing that sets definable in distal structures satisfy a property that \cite{distal_reg} dubs the \emph{strong Erd\H{o}s-Hajnal property}.
The constants in this asymptotic bound are improved by providing better bounds on exponents of distal cell decompositions.

\subsection{New Results in Specific Structures}\label{zar_new}
In this subsection, we collect the results from earlier in the paper and combine them with the Zarankiewicz bounds of \cite{cgs} as cited above.

We begin by just applying Fact \ref{zarcgs} with known distal exponent and VC density bounds, listing the exponents in the resulting Zarankiewicz bounds in a table.
\begin{cor}\label{cor_table}
	Let $\mathcal{M}$ be a structure from the left column of the following table and let $E \subseteq M^{a} \times M^{b}$ be a definable bigraph.
	Then for any $k \in \N$, there is a constant $\alpha=\alpha(\theta,k)$ such that for the corresponding values of $q$ and $r$ in this table,
	and any finite $P\subseteq M^{a},Q \subseteq M^{b}$, $\abs{P}=m,\abs{Q}=n$, if $E(P,Q)$ is $K_{k,k}$-free, then
	$\abs{E(P,Q)}\leq \alpha\left(m^{q}n^{r}+m+n\right).$
	\begin{figure}[H]
		\center
		\def\arraystretch{1.8}
		\begin{tabular}{|c | c | c |}
			\hline
			 $\mathcal{M}$& $q$ & $r$ \\ \hline
			 $o$-minimal expansions of groups  & $\frac{(2a-3)b}{(2a-2)b-1}$& $\frac{(2a-2)(b -1)}{(2a-2)b-1}$\\\hline
			 weakly $o$-minimal structures & $\frac{(2a-2)b}{(2a-1)b-1}$& $\frac{(2a-1)(b -1)}{(2a-1)b-1}$\\\hline
			 ordered vector spaces over ordered division rings & $\frac{(a-1)b}{ab-1}$& $\frac{a(b -1)}{ab-1}$\\\hline
			 Presburger arithmetic & $\frac{(a-1)b}{ab-1}$& $\frac{a(b -1)}{ab-1}$\\\hline
			 $\Q_p$ the valued field & $\frac{(3a-3)(2b-1)}{(3a-2)(2b-1)-1}$& $\frac{(3a-2)(2b-2)}{(3a-2)(2b-1)-1}$\\\hline
			 $\Q_p$ in the linear reduct & $\frac{(a-1)b}{ab-1}$& $\frac{a(b -1)}{ab-1}$\\\hline
		\end{tabular}
		\end{figure}
\end{cor}
\begin{proof}
	The bounds on VC densities and exponents of distal cell decompositions are listed in Theorem \ref{thm_table}.
	The VC densities come from the literature cited in that theorem, as does the exponent for the distal cell decomposition in the case of $o$-minimal expansions of groups with $a = 2$ from \cite{cgs},
	but the rest of the distal cell decomposition bounds are new to this article.
\end{proof}

In some applications to Zarankiewicz's problem, the omitted bipartite graph $K_{s,u}$ may give a better bound on the relevant VC density than is known for general formulas.
The following lemma bounds the VC density for formulas defining relations which do not contain a $K_{s,u}$:
\begin{lem}
	Let $\mathcal{M}$ be a first-order structure, and $\varphi(x;y)$ be a formula such that the bigraph with edge relation $\varphi(M^{\abs{x}};M^{\abs{y}})$ does not contain $K_{s,u}$. Then $\mathrm{vc}(\varphi)\leq s$.
\end{lem}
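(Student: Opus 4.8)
The plan is to unwind the definition $\mathrm{vc}(\varphi)=\mathrm{vc}^*(\varphi^*)$ and bound the dual shatter function $\pi^*_{\varphi^*}$ by a double-counting argument. Fix a finite $A\subseteq M^{\abs{x}}$ with $\abs{A}=n$. Since both $\mathrm{vc}^*$ and the hypothesis that the bigraph of $\varphi$ omits $K_{s,u}$ are first-order and hence preserved under passing to an elementary extension, I would work in a sufficiently saturated $\bar{\mathcal{M}}\succ\mathcal{M}$, in which every complete $\varphi^*$-type over $A$ is realized; thus $\abs{S^{\varphi^*}(A)}$ equals the number of distinct \emph{traces} $N_b:=\{a\in A:\bar{\mathcal{M}}\models\varphi(a;b)\}$ as $b$ ranges over $\bar M^{\abs{y}}$ (using $\varphi^*(b;a)\leftrightarrow\varphi(a;b)$). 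Write $\mathcal{T}$ for this set of traces; the goal is $\abs{\mathcal{T}}=\bigO(n^s)$.

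First I would dispose of the small traces: there are at most $\sum_{k=0}^{s-1}\binom{n}{k}=\bigO(n^{s-1})$ subsets of $A$ of size less than $s$, so the traces $t\in\mathcal{T}$ with $\abs{t}<s$ contribute $\bigO(n^{s-1})$. For the traces of size $\geq s$, I would double-count the set of pairs $(t,\sigma)$ with $t\in\mathcal{T}$, $\abs{t}\geq s$, and $\sigma$ an $s$-element subset of $t$. On the one hand, every such $t$ contributes at least one pair. On the other hand, fix an $s$-set $\sigma=\{a_1,\dots,a_s\}\subseteq A$ and suppose it were contained in $u$ distinct traces $t_1,\dots,t_u$, realized by parameters $b_1,\dots,b_u$. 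Because the $t_i$ are pairwise distinct, the $b_i$ are pairwise distinct; and $\sigma\subseteq t_i=N_{b_i}$ gives $\bar{\mathcal{M}}\models\varphi(a_j;b_i)$ for all $i\leq u$ and $j\leq s$, so $\{a_1,\dots,a_s\}$ on the $x$-side together with $\{b_1,\dots,b_u\}$ on the $y$-side forms a copy of $K_{s,u}$ in the bigraph of $\varphi$, contradicting the hypothesis. Hence each $\sigma$ lies in at most $u-1$ traces, the number of pairs is at most $(u-1)\binom{n}{s}$, and therefore the number of traces of size $\geq s$ is at most $(u-1)\binom{n}{s}=\bigO(n^s)$.

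Combining the two cases, $\pi^*_{\varphi^*}(n)=\abs{\mathcal{T}}=\bigO(n^s)$, so $\mathrm{vc}^*(\varphi^*)\leq s$, i.e.\ $\mathrm{vc}(\varphi)\leq s$. The only point that needs care is the observation that distinct traces must be witnessed by distinct parameters $b_i$ — this is precisely what makes the $K_{s,u}$-freeness hypothesis applicable — together with the routine transfer to a saturated model so that all types are realized; beyond that bookkeeping I do not anticipate any genuine obstacle, the argument being an instance of the usual Kővári–Sós–Turán-style counting.
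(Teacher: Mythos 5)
Your proof is correct and follows essentially the same K\H{o}v\'ari--S\'os--Tur\'an-style counting as the paper: split the traces by size, bound the small ones by $\bigO(n^{s-1})$, and use $K_{s,u}$-freeness to show each $s$-subset of $A$ lies in at most $u-1$ traces, giving $\bigO(n^s)$ in total. The only cosmetic difference is your passage to a saturated extension; the paper avoids this by simply choosing one witness $b \in M^{\abs{y}}$ per trace, which suffices since types over a finite parameter set are isolated by a single formula and hence already realized in $\mathcal{M}$.
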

\begin{proof}
	An equivalent way (see \cite{vcdensityi}) of defining $\pi_\varphi(n)$ is as 
	$\max_{A \subset M^{\abs{x}},\abs{A}=n}\abs{\varphi \cap A},$
	where $\varphi \cap A$ is shorthand for $\{A \cap \varphi(M^{\abs{x}},b):b \in M^{\abs{y}}\}$.

	Given $A \subset M^{\abs{x}}$, find $B \subset M^{\abs{y}}$ such that for each subset $A_0 \in \varphi \cap A$, there is exactly one $b\in B$ such that $A_0=A\cap \varphi(M^{\abs{x}},b)$. Thus $\abs{B}=\abs{\varphi \cap A}$. 

	The number of subsets of $A$ in $\varphi \cap A$ of size less than $B$ is trivially bounded by $\sum_{i=0}^{s-1}{\abs{A} \choose i}=\bigO(\abs{A}^{s-1})$. Thus there are $\bigO(\abs{A}^{s-1})$ elements $b \in B$ for which $\abs{\varphi(M^{\abs{x}},b)\cap A}<s$. However, by assumption, for each subset $A_B\subseteq A$ of size $B$, there are most $t-1$ elements $b$ of $B$ with $\mathcal{M}\models \varphi(a,b)$ for all $a \in A_s$. Thus there are at most $(t-1){\abs{A}\choose s}=\bigO(\abs{A}^s)$ elements $b \in B$ for which $\abs{\varphi(M^{\abs{x}},b)\cap A}\geq s$, and in general, $\abs{B}=\bigO(\abs{A}^s)$, so $\pi_\varphi(n)=\bigO(n^s)$, and $\mathrm{vc}(\varphi)\leq s$.
\end{proof}

Combining this lemma with Theorem \ref{zarcgs} gives us the following Zarankiewicz bound for bigraphs defined in distal structures,
making use only of the omitted complete bipartite subgraph for the VC density bound.
\begin{cor}\label{zarcor}
	Let $\mathcal{M}$ be a structure and $t \in \R_{\geq 2}$. Assume that $E(x,y)\subseteq M^{\abs{x}}\times M^{\abs{y}}$ is a definable relation given by an instance of a formula $\theta(x,y;z)\in\mathcal{L}$, such that the formula $\theta'(x;y,z):=\theta(x,y;z)$ has a distal cell decomposition of exponent $t$, and the graph $E(x,y)$ does not contain $K_{s,u}$. Then there is a constant $\alpha=\alpha(\theta,s,u)$ satisfying the following.

	For any finite $P\subseteq M^{\abs{x}},Q \subseteq M^{\abs{y}}$, $\abs{P}=m,\abs{Q}=n$, we have:
	$$\abs{E(P,Q)}\leq \alpha\left(m^{\frac{(t-1)s}{ts-1}}n^{\frac{t(s-1)}{ts-1}}+m+n\right).$$
\end{cor}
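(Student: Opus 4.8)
The plan is to deduce the corollary directly from Theorem~\ref{zarcgs}, using the VC‑density lemma above to supply that theorem's hypothesis on $\theta''$ with $d:=s$.

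The first step is a reduction to the case in which $z$ is the empty tuple. Fix $c_0\in M^{\abs{z}}$ with $E = \theta(M^{\abs{x}},M^{\abs{y}};c_0)$ and pass to the expansion $\mathcal{M}_{c_0}$ of $\mathcal{M}$ by constants naming $c_0$ (equivalently, regard $\theta(x,y;c_0)$ as a fixed parameter‑free relation). This preserves the hypothesis on $\theta'$: given a distal cell decomposition of $\theta'(x;y,z)$ of exponent $t$, applying it to parameter sets of the form $B\times\{c_0\}$ with $B\subseteq M^{\abs{y}}$ finite produces, for each such $B$, a cover of $M^{\abs{x}}$ by $\bigO(\abs{B}^t)$ cells, none crossed by $\{E(M^{\abs{x}};b):b\in B\}$ and uniformly definable over $B$ together with $c_0$; unwinding Definitions~\ref{def_dcd} and \ref{def_param_exp}, this is a distal cell decomposition of exponent $\le t$ for the formula defining $E$ over $\mathcal{M}_{c_0}$. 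So we may assume $\theta=\theta(x;y)$ is parameter‑free with $E(x,y)=\theta(M^{\abs{x}};M^{\abs{y}})$, whence $\theta'$ and $\theta''$ are both (reindexings of) $\theta$.

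Now, since the bigraph $E(x,y)$ does not contain $K_{s,u}$, the VC‑density lemma gives $\mathrm{vc}(\theta)\le s$, i.e. $\mathrm{vc}(\theta'')\le s$. (If $s=1$ the corollary is immediate: $K_{1,u}$‑freeness forces each element of $P$ to have fewer than $u$ neighbours in $Q$, so $\abs{E(P,Q)}< um$, which is absorbed by the $m$‑term; hence we may assume $s\ge 2$, so that $d=s\ge 2$ as Theorem~\ref{zarcgs} requires.) Every finite restriction $E(P,Q)$ inherits $K_{s,u}$‑freeness, hence is $K_{k,k}$‑free with $k:=\max(s,u)$. We are therefore in the situation of Theorem~\ref{zarcgs} with distal exponent $t$ and $d:=s$; because $\theta'$ carries an honest distal cell decomposition of exponent $t$ (not merely distal density $t$) no $\varepsilon$ is needed, and we invoke the version of Theorem~\ref{zarcgs} valid for arbitrary real $t\ge 2$ noted just after its statement. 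This yields a constant $\alpha=\alpha(\theta,k)=\alpha(\theta,s,u)$ with
$$\abs{E(P,Q)}\le \alpha\left(m^{\frac{(t-1)s}{ts-1}}n^{\frac{t(s-1)}{ts-1}}+m+n\right),$$
as required.

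The only genuine obstacle is the reduction in the second paragraph: one must check that specialising the distal cell decomposition of $\theta'$ to the instance $c_0$ keeps the cells uniformly definable and does not raise the exponent, and that the VC‑density lemma then applies to the resulting parameter‑free formula for $E$. Both are routine once Definitions~\ref{def_dcd} and \ref{def_param_exp} are unwound, but they are precisely the reason this corollary, like Theorem~\ref{zarcgs}, is phrased in terms of an instance of $\theta$ rather than of $\theta$ itself.
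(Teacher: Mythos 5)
Your proof is correct and takes essentially the same route as the paper, whose proof is simply to combine the preceding VC-density lemma with Theorem \ref{zarcgs} (in the real-exponent form noted after its statement), taking $d=s$ and $k=\max(s,u)$. Your additional care --- naming the instance's parameters as constants so that the lemma's bound on the instance formula supplies the hypothesis on $\theta''$ without changing the distal exponent, and disposing of $s=1$ separately since Theorem \ref{zarcgs} requires $d\geq 2$ --- only fills in details the paper leaves implicit.
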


This Corollary recalls one version of Theorem 2.6 of \cite{es}, which provides the same bound on $\abs{E(P,Q)}$ from a slightly different assumption on $t$, and either the same condition of $\varphi(x;y)$ omitting $K_{s,u}$ for some $u$, or $\varphi(x;y)$ omitting $K_{u,u}$ for some $u$ and having dual VC density at most $s$.

To phrase this corollary in terms of distal density, we must add a small error term again.
If instead $t$ is the distal density of $\theta'$, then for all $\varepsilon \in \R_{>0}$, we get the bound 
$$\abs{E(P,Q)}\leq \alpha\left(m^{\frac{(t-1)s}{ts-1}+\varepsilon}n^{\frac{t(s-1)}{ts-1}}+m+n\right),$$
where $\alpha$ depends also on $\varepsilon$.

To illustrate the generality of Corollary \ref{zarcor}, we will apply it to some specific structures. Let us first apply it to $\mathcal{M} = \R_{\mathrm{exp}} = \langle\R; 0,1,+,*,<,e^x\rangle$. This structure is an expansion of a field, and $o$-minimal by \cite{wilkie}, allowing us to apply the distal exponent bounds from Theorem \ref{ominthm}. We define an \emph{exponential polynomial} to be a function $\R^n \to \R$ in $\Z[x_1,\dots,x_n,e^{x_1},\dots,e^{x_n}]$ as in \cite{exppoly}, and an \emph{exponential-polynomial inequality} to be an inequality of exponential polynomials. As any exponential polynomial function over $\R$ is definable in this structure, a boolean combination of exponential-polynomial inequalities or equations will be as well. Combining all of this with Corollary \ref{zarcor} gives the following result:
\begin{cor} \label{cor:Rexp}
	Assume that $E(x,y)\subseteq \R^{\abs{x}}\times \R^{\abs{y}}$ is a relation given by a boolean combination of exponential-polynomial (in)equalities, and the graph $E(x,y)$ does not contain $K_{s,u}$. Then there is a constant $\alpha=\alpha(\theta,s,u)$ satisfying the following.z

	For any finite $P\subseteq \R^{\abs{x}},Q \subseteq \R^{\abs{y}}$, $\abs{P}=m,\abs{Q}=n$, we have:
	$$\abs{E(P,Q)}\leq \alpha\left(m^{\frac{(2\abs{x}-2)s}{(2\abs{x}-1)s-1}}n^{\frac{(2\abs{x}-1)(s-1)}{(2\abs{x}-1)s-1}+\varepsilon}+m+n\right).$$
\end{cor}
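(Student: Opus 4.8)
This statement is essentially a direct instance of Corollary \ref{zarcor}, so the plan is simply to check that the ambient structure supplies the two hypotheses of that corollary for $E$. First I would recall that $\R_{\mathrm{exp}} = \langle \R; 0,1,+,*,<,e^x\rangle$ is $o$-minimal, by Wilkie's theorem \cite{wilkie}, hence in particular weakly $o$-minimal (and an $o$-minimal expansion of a field). Every exponential polynomial in $\Z[x_1,\dots,x_n,e^{x_1},\dots,e^{x_n}]$ is a term built from $+,*,e^x$ and the $\emptyset$-definable integer constants, so any finite boolean combination of exponential-polynomial (in)equalities is defined by a parameter-free formula $\theta(x,y)$ in the language of $\R_{\mathrm{exp}}$; taking the auxiliary parameter tuple $z$ of Corollary \ref{zarcor} to be empty, $E(x,y)$ is exactly the instance $\theta(x,y)$.

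Next I would apply Theorem \ref{ominthm} (the weakly $o$-minimal case) to the singleton family $\{\theta'\}$, where $\theta'(x;y) := \theta(x,y)$: this yields a parameter-free distal cell decomposition of $\theta'$ of exponent $2\abs{x}-1$. Feeding this into Corollary \ref{zarcor} with $t := 2\abs{x}-1$ (which is $\geq 2$ in the interesting range $\abs{x}\geq 2$) and with $s$ the parameter for which the graph of $E$ omits $K_{s,u}$, we obtain
$$\abs{E(P,Q)}\leq \alpha\left(m^{\frac{(2\abs{x}-2)s}{(2\abs{x}-1)s-1}}n^{\frac{(2\abs{x}-1)(s-1)}{(2\abs{x}-1)s-1}}+m+n\right)$$
with $\alpha$ depending only on $\theta$, $s$ and $u$; since $n^{c}\leq n^{c+\varepsilon}$ for $n\geq 1$, this immediately gives the stated inequality with the extra $+\varepsilon$ in the exponent of $n$ (and $\alpha$ now allowed to depend on $\varepsilon$). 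In fact Theorem \ref{ominthm} produces an honest distal cell decomposition of exponent $2\abs{x}-1$, not merely this distal density, so the $+\varepsilon$ is not actually forced and is kept only for uniformity with the distal-density phrasing of Corollary \ref{zarcor}.

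I do not expect any real obstacle here: the $o$-minimality of $\R_{\mathrm{exp}}$, the distal cell decomposition for weakly $o$-minimal structures, and the Zarankiewicz-type bound are all already available, and the argument is pure bookkeeping. The one point deserving a moment's attention is the first step — exhibiting a single partitioned parameter-free formula $\theta(x,y;z)$ that captures the given boolean combination of exponential-polynomial (in)equalities in the shape Corollary \ref{zarcor} expects — but because exponential polynomials have integer (hence $\emptyset$-definable) coefficients this is immediate, with $z$ empty. If one instead wished to exploit that $\R_{\mathrm{exp}}$ expands a field, the second clause of Theorem \ref{ominthm} would let one take $t = 2\abs{x}-2$ for $\abs{x}\geq 2$, trading the exponent of $m$ for that of $n$; I would keep $2\abs{x}-1$ to match the statement as written.
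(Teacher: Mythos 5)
Your proposal is correct and takes essentially the same route as the paper, which likewise observes that exponential polynomials are definable in the $o$-minimal structure $\R_{\mathrm{exp}}$ (Wilkie), applies Theorem \ref{ominthm} to get a distal cell decomposition of exponent $2\abs{x}-1$ for $\theta'(x;y,z)$, and plugs this into Corollary \ref{zarcor}. Your additional bookkeeping (parameter-free $\theta$ with $z$ empty, and the harmless $+\varepsilon$) only makes explicit what the paper leaves implicit.
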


Let us also apply Corollary \ref{zarcor} to subanalytic sets over $\Z_p$, defined as in \cite{denef_vdd}:
\begin{defn}\label{def_subanalytic}
	\begin{itemize}
		\item A set $S \subseteq \Z_p^n$ is semianalytic if for every $x \in S$, there is an open neighborhood $U$ of $x$ such that $U \cap S$ can be defined by a boolean combination of inequalities of analytic functions.
		\item A set $S \subseteq \Z_p^n$ is subanalytic if for every $x \in S$, there is an open neighborhood $U$ of $x$ and a semianalytic set $S'$ in $U \times \Z_p^N$ for some $N$ such that $U \cap S = \pi(S')$, where $\pi : U \times \Z_p^N \to U$ is the projection map.
	\end{itemize}
\end{defn}

For any $n$, the subanalytic subsets of $\Z_p^n$ are exactly the quantifier-free definable subsets in a structure $\mathcal{R}_\mathrm{an}$, which is a substructure of the structure $\mathcal{K}_\mathrm{an}$, consisting of $\Q_p$ with its analytic structure, as described in \cite{subanalytic}. As per Theorem A'/B from \cite{subanalytic}, this structure is $P$-minimal with definable Skolem functions, we can apply the distal exponent bounds from Theorem \ref{qpmacthm}, giving us this corollary:
\begin{cor} \label{cor:Qpan}
	Assume that $E(x,y)\subseteq \Z_p^{\abs{x}}\times \Z_p^{\abs{y}}$ is a subanalytic relation, and the graph $E(x,y)$ does not contain $K_{s,u}$. Then there is a constant $\alpha=\alpha(\theta,s,u)$ satisfying the following.

	For any finite $P\subseteq \Z_p^{\abs{x}},Q \subseteq \Z_p^{\abs{y}}$, $\abs{P}=m,\abs{Q}=n$, we have:
	$$\abs{E(P,Q)}\leq \alpha\left(m^{\frac{(3\abs{x}-3)s}{(3\abs{x}-2)s-1}}n^{\frac{(3\abs{x}-2)(s-1)}{(3\abs{x}-2)s-1}+\varepsilon}+m+n\right).$$
\end{cor}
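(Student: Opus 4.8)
The plan is to present $E$ as a definable relation in a $P$-minimal field that satisfies the hypotheses of Theorem \ref{qpmacthm}, and then to invoke Corollary \ref{zarcor}. First I would recall the model theory of subanalytic sets: by \cite[Theorem A$'$/B]{subanalytic}, the structure $\mathcal{K}_\mathrm{an}$ given by $\Q_p$ with its analytic structure is $P$-minimal, admits quantifier elimination in Macintyre's language augmented by symbols for the restricted analytic functions, and has definable Skolem functions. Since the subanalytic subsets of $\Z_p^n$ are precisely the quantifier-free definable subsets of the substructure $\mathcal{R}_\mathrm{an}$, and $\Z_p = \{z : v(z)\geq 0\}$ is $\emptyset$-definable in $\mathcal{K}_\mathrm{an}$, the relation $E$ is an instance $\theta(x,y;z)$ of an $\mathcal{L}_\mathrm{an}$-formula; after conjoining the conditions $x \in \Z_p^{\abs{x}}$ and $y \in \Z_p^{\abs{y}}$ (a boolean combination, harmless by Lemma \ref{bool_comb}), I may assume that the relation on $\Q_p^{\abs{x}} \times \Q_p^{\abs{y}}$ it defines coincides with $E$, and in particular still omits $K_{s,u}$.

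Next I would check that Theorem \ref{qpmacthm} applies to $\mathcal{K}_\mathrm{an}$. Although that theorem is stated for a $P$-minimal field in Macintyre's language, its proof passes through Lemma \ref{qpmaclem} and Theorem \ref{dimensioninduction}, and the only structural facts it uses are $P$-minimality, quantifier elimination in an expansion of Macintyre's language, and the one-variable cell decomposition of \cite[eq.~7.5]{vcdensityi}, which is available precisely because $\mathcal{K}_\mathrm{an}$ has definable Skolem functions \cite{skolem}. Hence every finite set of $\mathcal{L}_\mathrm{an}$-formulas $\Phi(x;y)$ admits a distal cell decomposition of exponent $3\abs{x}-2$; applying this to the reindexed singleton $\{\theta'\}$, where $\theta'(x;y,z):=\theta(x,y;z)$, gives a distal cell decomposition for $\theta'$ of exponent $t := 3\abs{x}-2$.

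Finally I would feed this into Corollary \ref{zarcor} with this value of $t$. Since $E(x,y)$ omits $K_{s,u}$ and $\theta'$ has a distal cell decomposition of exponent $t$, the corollary produces a constant $\alpha = \alpha(\theta,s,u)$ with
$$\abs{E(P,Q)} \leq \alpha\left(m^{\frac{(t-1)s}{ts-1}}n^{\frac{t(s-1)}{ts-1}}+m+n\right)$$
for all finite $P,Q$ with $\abs{P}=m$ and $\abs{Q}=n$; substituting $t = 3\abs{x}-2$ rewrites the two exponents as $\frac{(3\abs{x}-3)s}{(3\abs{x}-2)s-1}$ and $\frac{(3\abs{x}-2)(s-1)}{(3\abs{x}-2)s-1}$, and since the $+\varepsilon$ appearing in the displayed bound only weakens the inequality, the claimed estimate follows. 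The one step that is more than bookkeeping is the second: I would need to confirm that the construction of subintervals and subinterval types and the counting argument in the proof of Lemma \ref{qpmaclem} use nothing about the ambient structure beyond the three properties listed above, so that the exponent $3\abs{x}-2$ transfers verbatim from $\Q_p$ in Macintyre's language to $\mathcal{K}_\mathrm{an}$.
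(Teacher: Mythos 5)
Your proposal is correct and follows essentially the same route as the paper: express the subanalytic relation as ($\emptyset$-definable-language) definable in $\mathcal{R}_\mathrm{an}\subseteq\mathcal{K}_\mathrm{an}$, use Theorem A$'$/B of \cite{subanalytic} to get $P$-minimality and definable Skolem functions so that Theorem \ref{qpmacthm} yields a distal cell decomposition of exponent $3\abs{x}-2$ for $\theta'$, and then apply Corollary \ref{zarcor} with $t=3\abs{x}-2$. Your extra verification that the proof of Lemma \ref{qpmaclem} only uses $P$-minimality, quantifier elimination, and the one-variable cell decomposition from \cite{vcdensityi} (so that the exponent transfers to the analytic expansion rather than just Macintyre's language) is exactly the point the paper leaves implicit, and is a sound way to justify it.
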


\bibliographystyle{plainurl}
\bibliography{ref.bib}

\end{document}